\newcommand{\E}{\mathbb{E}}
\newtheorem{theorem}{Theorem}
\newtheorem*{theorem*}{Theorem}
\newtheorem{lemma}{Lemma}
\newtheorem{fact}{Fact}
\newtheorem*{lemma*}{Lemma}
\newtheorem*{remark*}{Remark}
\newtheorem*{fact*}{Fact}
\newtheorem*{proof*}{Proof}
\newtheorem{proposition}{Proposition}
\newtheorem{definition}{Definition}
\newtheorem{corollary}{Corollary}
\DeclareMathOperator*{\argmin}{arg\,min}
\DeclareMathOperator*{\argmax}{arg\,max}
	\tikzstyle{decision} = [diamond, draw, fill=white!20,
	\tikzstyle{block} = [rectangle, draw, fill=white!20,
	\tikzstyle{smallblock} = [rectangle, draw, fill=white!20,
	\tikzstyle{line} = [draw, -latex']
	\tikzstyle{cloud} = [draw, ellipse,fill=white!20,  node distance=2cm,
\begin{document}
\title{A Capacity-Price Game for Uncertain Renewables Resources }

\author{Pan~Li,
	Shreyas~Sekar,~\IEEEmembership{Member,~IEEE,}
	and~Baosen~Zhang,~\IEEEmembership{Member,~IEEE}
	\IEEEcompsocitemizethanks{\IEEEcompsocthanksitem Pan Li is with Facebook Inc., Menlo Park, CA. This work was done when she was with the University of Washington. 
		E-mail: lipan.uw@gmail.com.
		\IEEEcompsocthanksitem Shreyas Sekar is with Harvard University, Boston, MA. This work was done when he was with the University of Washington.
		.
		E-mail: ssekar@hbs.edu.
		\IEEEcompsocthanksitem Baosen Zhang is with the Department of Electrical and Computer Engineering, University of Washington, Seattle, 98195.
		.
		E-mail: zhangbao@uw.edu.
		\IEEEcompsocthanksitem This work is partially supported by NSF grant CNS-1544160.
		}%
		 <-this 
	}

%


\IEEEtitleabstractindextext{
\begin{abstract}
Renewable resources are starting to constitute a growing portion of the total generation mix of the power system. A key difference between renewables and traditional generators is that many renewable resources are managed by individuals, especially in the distribution system. In this paper, we study the capacity investment and pricing problem, where multiple renewable producers compete in a decentralized market. It is known that most deterministic capacity games tend to result in very inefficient equilibria, even when there are a large number of similar players. In contrast, we show that due to the inherent randomness of renewable resources, the equilibria in our capacity game becomes efficient as the number of players grows and coincides with the centralized decision from the social planner's problem. This result provides a new perspective on how to look at the positive influence of randomness in a game framework as well as its contribution to resource planning, scheduling, and bidding. We validate our results by simulation studies using real world data.
\end{abstract}
\begin{IEEEkeywords}
	Capacity game, Nash equilibrium, renewables generation
\end{IEEEkeywords}
}
%

\maketitle

\section{Introduction}\label{sec:intro}


Distributed renewable resources are starting to play an increasingly important role in energy generation. For example, the installation of photovoltaic (PV) panels across the world has grown exponentially during the past decade~\cite{TimilsinaEtAl2012}. These renewable resources tend to be different from traditional large-scale generators as they are often spatially distributed, leading to many small generation sites across the system. The proliferation of these individual renewable generators (especially PV) has allowed for a much more flexible system, but also led to operational complexities because they are often not coordinated~\cite{Winter1991}.  Recently, there has been strong regulatory and academic push to allow these individual generators to participate in a market, hoping to achieve a more efficient and streamlined management structure~\cite{Heinrichs2013,Kalathil2017}. Therefore, in this paper we study an investment game where individual firms decide their installation capacities of PV panels and compete to serve the load.


Currently, there are several lines of fruitful research on the investment of solar energy resources. The common challenge in these works is to address the \emph{pricing} of solar energy, since once installed, power can be produced at near zero operational cost. In \cite{CoutureEtAl2010}, feed-in tariffs (fixed prices) are used to guide the investment decisions. In \cite{WustenhagenEtAl2013,FletenEtAl2007}, risks about future uncertainties in prices are taken into account, although these prices are assumed to be independent of the investment decisions. Instead of exogenously determined feed-in tariffs,   \cite{MenanteauEtAl2003,Hirth2013,NegashEtAl2015} study incentive based pricing, arguing that the price of solar energies should match their market value, which is the revenue that those resources can earn in markets, without the income from subsidies. However, the investment question of how to decide the capacity of each solar installation is not considered.


A common assumption made in existing studies is that an operator (or utility) makes centralized decisions about the capacity of the solar installation at different sites~\cite{GrossmannEtAl2013,Aragones-BeltranEtAl2010} as well as the market clearing price upon aggregating the bids from individual producers. On the other hand, since small PV generation is mostly privately owned, it is arguably more realistic to consider an environment where both the investment and the pricing decisions are made in a decentralized manner.
In this market setting, the PV owners compete by making their own investment and price bidding decisions, based on the information they have, as opposed to a centralized decision made by a single operator. In this paper, we are interested in understanding these strategic decisions, particularly in the electrical distribution system.

The competition between individual producers in a decentralized market for electricity is normally studied either via the Cournot model or the Bertrand model~\cite{Kirschen2004}. In the former, the producers compete via quantity, while in the latter they compete via price. In this work, we adopt the Bertrand competition to model price bidding since it is a more natural process in the distribution system, where there is no natural inverse demand function (required by the Cournot competition model)~\cite{BorensteinEtAl2003,Crampes06capacitycompetition}. Then the investment game becomes a \emph{two-level} game as shown in Fig.~\ref{fig:decentralized}. For any given capacities, the producers compete through the Bertrand model to determine their prices to satisfy the demand in the system. Then the outcome of this game feeds into an upper level capacity game, where each producer determines its investment capacities to maximize its \emph{expected profit}. Fig.~\ref{fig:setup1} depicts a detailed comparison between this two-level Bertrand game and a more centralized marketplace where the utility selects the clearing price.


\begin{figure*}[htbp]
	\centering
	\begin{minipage}[t]{0.465\linewidth}
		\centering
			\begin{tikzpicture}[node distance = 2cm, auto]
		\node [block, node distance = 2cm] (capacity) {Energy producers submit a per-unit bid and a supply cap};
		\node [block, below of=capacity, node distance = 2cm] (price) {The designer selects $k$ buyers having the lowest bids so that the total supply matches the demand};
		\node [block, below of=price, node distance = 2.5cm] (pay) {The market clearing price equals the $k+1^{th}$ lowest bid. Each of the $k$ buyers receives a per-unit payment equal to the clearing price. };
		\path [line] (capacity) -- (price);
		\path [line] (price) -- (pay);
		\end{tikzpicture}
		\subcaption{A description of a centralized mechanism where a number of producers submit bids and a central authority sets the market clearing price so that the overall supply coincides with demand.}
		\label{fig:centralized}
	\end{minipage}~~
	\begin{minipage}[t]{0.465\linewidth}

	\begin{tikzpicture}[node distance = 2cm, auto]
\node [block, node distance = 2cm] (capacity) {Energy producers determine investment capacity to maximize expected profit.};
\node [block, below of=capacity, node distance = 2cm] (price) {Given capacity, each producer bids according to a Bertrand model to maximize revenue.};
\node [block, below of=price, node distance = 2cm] (buyer) {Buyers purchase sequentially starting with the lowest bid until total demand is met};
\path [line] (capacity) -- (price);
\path [line] (price) -- (buyer);

\draw [dashed,->] (price) --++ (4.7cm,0cm) |- node[above]  {optimal revenue} (capacity);
\end{tikzpicture}
		\subcaption{An illustration of the two level game between a group of solar energy producers. In the higher level, the producers determine their investment capacity to maximize expected profit, and the lower game determines the pricing of solar energy through Betrand competition.} 
		\label{fig:decentralized}
	\end{minipage}
	\caption{Centralized vs. Decentralized market mechanisms.}
	\label{fig:setup1}
\end{figure*}


This type of two-level game was studied in~\cite{AcemogluEtAl09} in the context of communication network expansion. They showed that Nash equilibria exist, but the efficiency of any of these equilibria are bad compared to the social planner's (or operator's) solution. More precisely, as the number of players grows, the social cost of all of the equilibria grow with respect to the cost of the social planner's problem. Therefore, instead of increasing efficiencies, competition can be arbitrarily bad. A similar intuition has existed in traditional power system investment problems, where the market power of the generators is highly regulated and closely monitored~\cite{Wu2006}.

In this paper, we show that contrary to the result in \cite{AcemogluEtAl09}, \emph{the investment game between renewable producers leads to efficient outcomes under mild assumptions.} More precisely, 1) the investment capacity decisions made by the individual producers match the capacity decisions that would be made by a social planner; 2) as the number of producers increases, the equilibria of the price game approaches a price level that allows the producers to just recover their investment costs. The key difference comes from the fact that renewables are \emph{inherently random}. Therefore instead of trying to exploit the ``corner cases'' in a deterministic setting as in \cite{AcemogluEtAl09,Wu2006}, the uncertainties in renewable production naturally induces conservatism into the behavior of the producers, leading to a drastic improvement of the Nash equilibria in terms of efficiency. Therefore, uncertainty helps rather than hinders the efficiency of the system.

To analyze the equilibria of the game, our work builds on the results in~\cite{TaylorEtAl16}. In~\cite{TaylorEtAl16}, the authors discuss the price bidding strategies in markets with exactly two renewable energy producers. They show that a unique mixed pricing strategy always exists given that the capacity of those producers are fixed beforehand. They extend it to a storage competition problem in later work~\cite{TaylorEtAl17}. However, this work did not address the strategic nature of the capacity investment decisions, nor did it consider markets with more than two producers.

In our setting, we explicitly consider the joint competition for capacity considering each player's investment cost, as well as the bidding strategy to sell generated energy. This problem is neither studied in traditional capacity investment games (randomness is not considered)~\cite{Reynolds1987,SmitEtAl2012}\footnote{The work in \cite{GoyalEtAl2007,FrutosEtAl2011} studies an investment game where the demand curve is uncertain, but under a very different context than ours} nor in competition of renewable resources (investment strategy is considered)~\cite{MenanteauEtAl2003,LangnissEtAl2003}.
To characterize the Nash equilibria in the two level capacity-pricing game, we consider two performance metrics. The first is social cost, which is the total cost of a Nash equilibrium solution with respect to the social planner's objective. The second is market efficiency, which measures the market power of the energy producers. As a comparison, the results in \cite{AcemogluEtAl09} show that in a deterministic capacity-pricing game, as the number of producers grows, \emph{neither the social cost nor the efficiency improves at equilibrium}. In contrast, we show that a little bit of randomness leads to improvements on both metrics. Specifically, we make the following two contributions:
\begin{enumerate}
	\item We consider a two level capacity-pricing game between multiple renewable energy producers with random production. We show that contrary to commonly held belief, randomness improves the quality of the Nash equilibria.\footnote{This is conceptually similar to the results obtained in~\cite{ZhangEtAl2015}, where randomness increases the efficiency of Cournot competition.}
	\item We explicitly characterize the Nash equilibria and show that the social cost and efficiency improve as the number of producers grows.
\end{enumerate}

The rest of the paper is organized as follows. Section \ref{sec:model} motivates the problem set up and details the modeling of both the decentralized and centralized market. Section \ref{sec:eval} formally introduces the evaluation metrics for our setting. Section \ref{sec:mainresults} presents the main results of this paper, i.e., the relationship between the proposed decentralized market and the social planner's problem, and the analysis on the efficiency of the game in the decentralized market. Proofs for the main theorems are left in the appendices for interested readers. The simulation results are shown in Section \ref{sec:sim} followed by the conclusion in Section \ref{sec:conclusion}.

\section{Technical preliminaries}\label{sec:model}
\subsection{Motivation}
Traditionally, power systems are often built and operated in a centralized fashion. The system operator acts as the social planner by aggregating the producers and makes centralized decisions on investment and scheduling~(as shown in Fig. \ref{fig:social}). The goal of the social planner is to  maximize the overall welfare of the whole system--- this includes optimizing the costs incurred due to the investment and installation, and the cost paid by the consumers.

\begin{figure}[htbp]
	\begin{minipage}[t]{0.45\linewidth}
		\includegraphics[width=0.8\linewidth]{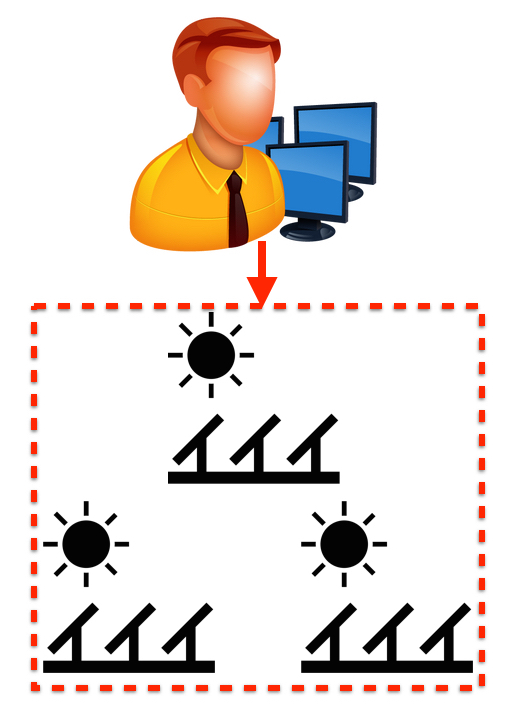}
		\subcaption{Centralized.}
		\label{fig:social}
	\end{minipage}%
	\hfill%
	\begin{minipage}[t]{0.45\linewidth}
		\includegraphics[width=0.8\linewidth]{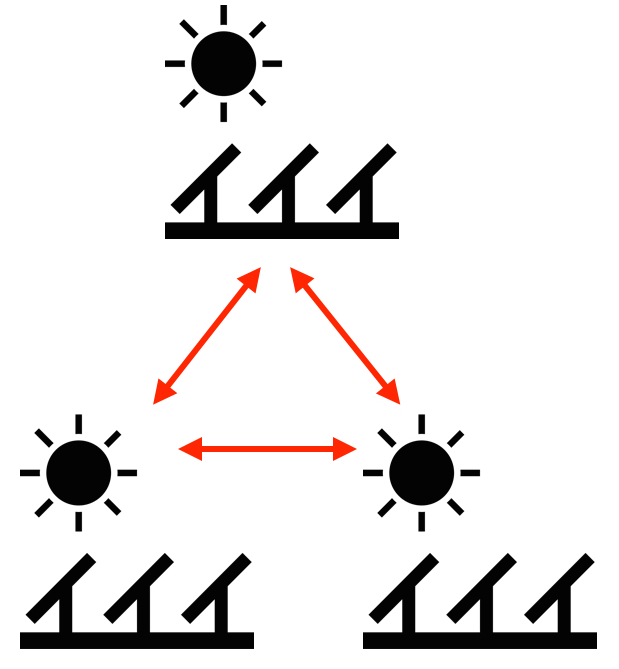}
		\subcaption{Decentralized.}
		\label{fig:game}
	\end{minipage}
	\caption{Centralized vs. decentralized market setup. }
	\label{fig:setup}
\end{figure}

However, as distributed energy resources~(DERs) start to disperse across the power distribution network, the centralized setup becomes difficult to maintain and manage. DERs such as rooftop PV cells are small, numerous, and owned by individuals, allowing them to act as producers and choose their own capacities and prices. Consequently, managing these resources through a decentralized market~(as shown in Fig.~\ref{fig:game}) is starting to gain significant traction in the power distribution system.

Several issues arise in a decentralized market. Chief among them is that
it is not clear whether the decentralized market achieves the same decision as if there were a central planner maximizing social welfare. The competition between energy producers is suboptimal if the following occurs:
\begin{itemize}
	\item If the investment decisions by the competing producers deviate from the social planner's decision: this means that the competition is sub-optimal when it comes to finding a socially desirable investment plan.
	\item If the bidding strategy leads to a higher payment from electricity consumers than that from the social planner's decision, it means that the energy resource producers are taking advantage of the buyers and the market is not efficient.
\end{itemize}
Both of these adverse phenomena can happen in decentralized markets in the absence of uncertainty~\cite{AcemogluEtAl09,JohariEtAl2010,OzdaglarEtAl2011}, even when there are a large number of individual players. However, the rest of this paper shows that neither of them occur in a decentralized market with renewables resources having random generation. We show that the inherent \emph{uncertainty} in the production naturally improves the quality of competition. We start by formally introducing the game in the next section.


\subsection{Renewable Production Model}
Throughout this paper, we denote some important terms by the following:
\begin{itemize}
	\item $Z_i$: Random variable representing the output of producer $i$, scaled between 0 and 1. The moments of $Z_i$ are denoted as $\E Z_i = \mu_i$, $\E (Z_i - \E Z_i)^2 = \sigma_i^2$ and $\E |Z_i - \E Z_i|^3 = \rho_i$.
	\item $C_i$: Capacity of producer $i$.
	\item $D$: Total electricity demand in the market
	\item $N$: Number of producers in the market.
	\item $\gamma_i$: Investment cost for unit capacity for producer $i$.
	\item $\xi$: Efficiency of the game equilibrium.
	\item $\bm{x}_{-i}$: The quantities chosen by all other producers except $i$, that is $\bm{x}_{-i} = [x_1, \dots, x_{i-1}, x_{i+1}, \dots, x_N]$.
	\item $(x)^+ \coloneqq \max(x,0)$.
	\item $(x)^- \coloneqq \min(x,0)$.
\end{itemize}
\textbf{Renewable Production Model:} When producer $i$ invests in a capacity of $C_i$, its actual generation of energy is a random variable given by $C_i Z_i \leq C_i$. That is, due to the randomness associated with renewables, its realized production may not equal its maximum capacity.

We make the following assumption on the $Z_i$'s: \\
\noindent \textbf{[A1]} We assume that the random variable $Z_i$ has support $[0,1]$ and its density function is bounded and continuous on its domain. This assumption is mainly made for analytical convenience and captures a wide range of probabilistic distributions used in practice, e.g., truncated normal distribution and uniform distribution. Furthermore, we assume that $Z_i$ is not a constant, so $\E (Z_i - \E Z_i)^2 = \sigma_i^2 > 0$.

{
We also make the following assumptions on demand $D$:\\
\noindent \textbf{[B1]} We assume that the random variable $D$ is non-negative and that $D$ is bounded, i.e., $ D_{min} \leq D \leq D_{max}$ where $\frac{D_{min}}{D_{max}}$ is a constant bounded away from zero. }

\subsection{Competition in decentralized markets}\label{subsec_cap_game_intro}

Consider $N$ renewable producers who compete in a decentralized market. Each producer needs to decide two quantities: capacity (sizing) and the corresponding everyday price bidding strategy. To make this decision, each producer needs to take into consideration the fact that larger capacities lead to higher investment costs but may also result in enhanced revenue due to increased sales.  If the invested capacity is low, then the investment cost is low but the producer risks staying out of the market because of less capability to provide energy. Therefore competition requires non-trivial decision making by the decentralized stakeholders. In this paper, we consider both cases where each producer has either same or different investment cost. To start with, let us assume $\gamma_i=\gamma$ for all $i$. This assumption is true to the first order since the solar installation cost in an area is roughly the same for all the consumers. A more general case with different $\gamma_i$'s is left to Section \ref{sec:asym_gamma}. Since the producers need to compete for capacity based on revenue (which is determined by optimal bidding), we refer to the capacity competition---how much to invest---as the \emph{capacity game} and the pricing competition, i.e., how much to bid, as the \emph{price sub-game}. {To generalize the scenario we assume that the price bidding is a one-time action that approximates an amortized version of a series of continuous bidding actions.}
\subsection{Capacity game}\label{subsec:capacity_game}
The ultimate decision for the producers is to determine the optimal capacity to invest in. Suppose that the capacity is denoted by $C_i$ for each producer $i$, then each producer's objective is to maximize its profit, which is specified as:
\begin{equation}\label{eq:cap_game}
\text{(Profit)} \quad \pi_i(\bm{C}_i, C_{-i}) - \gamma C_i, \forall i,
\end{equation}
where $\pi_i(\bm{C}_i, C_{-i})$ is the payment (revenue) from consumers to producer $i$ when its capacity is fixed at $C_i$ and the others' capacities are fixed at $C_{-i}$.  This payment is determined by the price sub-game given that a capacity decision is already made, i.e., $\bm{C} = [C_1, C_2, ..., C_N]$: we leave a detailed discussion of the revenue and the price sub-game to Section \ref{subsec:price_game}. The term $\gamma C_i$ represents the investment cost.  


Since we are in a game-theoretic scenario, the appropriate solution concept is that of a Nash equilibrium. Specifically, a capacity vector $\bm{C}^{\diamond} = [C^{\diamond}_1, C^{\diamond}_2, ..., C^{\diamond}_N]$ is said to be a Nash equilibrium if:
\begin{equation}\label{eq:cap_game_nash}
C^{\diamond}_{i} = \argmax_{C_i \geq 0 } \pi_i(\bm{C}_i, C^{\diamond}_{-i}) - \gamma C_i, \forall i,
\end{equation}

The Nash equilibrium shown in \eqref{eq:cap_game_nash} is interpreted as the following: each producer $i$ chooses a capacity $C_i$ such that given the optimal capacity strategy of the others, there is no incentive for this producer to deviate from this capacity $C_i$. Note that while choosing its capacity, each producer implicitly assumes that its resulting revenue is decided by the solution obtained via the price sub-game. {Note that for each PV producer to make a decision according to \eqref{eq:cap_game_nash}, it needs to know the distribution of demand $D$, where random variable $D$ can be seen as discounted back from infinite horizon so we do not explicitly model $D_t$ in this manuscript. It also needs to know the distribution of solar radiation $Z_i$,  and investment price $\gamma$ as shown in next Section. Distribution of $D$ can be obtained through historical data, $Z_i$ can be estimated based on geographical differences, and investment price should be publicly available given that PV producers use similar materials to fabricate panels. Therefore each PV player can independently play its strategy without accessing private information of others.}


\subsection{Price sub-game}\label{subsec:price_game}
In this section, we explicitly characterize the payment function  $\pi_i(\bm{C}_i, C_{-i})$ at the equilibrium solution of the price sub-game for a fixed capacity vector $(C_i)_{1 \leq i \leq N }$. The producers now compete to sell energy at some price $p_i$. This is known as the Bertrand price competition model, where the consumer prefers to buy energy at low prices. In this model, consumers resort to buying at a higher price only when the capacity of all the lower-priced producers are exhausted. Suppose that the profit for producer $i$ when the producers bid at $\bm{p} = [p_1, p_2, ..., p_N] = [p_i, \bm{p}_{-i}]$ is denoted by $\pi_i(C_i, \bm{C}_{-i}, p_i, \bm{p}_{-i})$. We make the following assumption about the prices:\\
\noindent \textbf{[A2]} The customers have the options to buy energy at unit price from the main grid. \\
This assumption follows the current structure of a distribution system, where customers have access to the main grid at a fixed price, and here we normalize the price to $1$. Equivalently, this can be thought as the value of the lost load in a microgrid without a connection to the bulk electric system~\cite{Katiraei2006}.


As shown in~\cite{AcemogluEtAl09,TaylorEtAl16}, there is no pure Nash equilibrium on $p$ for the price sub-game. Intuitively, this means that no player can bid at a single deterministic price and achieve the most revenue, since the other players can undercut by a tiny amount and sell all their generation. Therefore no player settles on a pure strategy. Such a situation particularly arises where each producer is small ($C_i \leq D, \forall i$), but the aggregate is large ($\sum_i C_i > D$, where $D$ denotes the total demand in the market).




However, there exists a \emph{mixed Nash equilibrium on price $p$}, where the optimal bids follow a distribution such that the bids of each DER are independent of the rest. Informally, this implies that each producer $i$ draws its price $p_i$ from a distribution $\mathcal{P}^*_i$, which maximizes its expected revenue given the distributions of the other producers. For example, the price distribution of a two player Bertrand model is given in \cite{TaylorEtAl16}. For our purpose, the exact form of the optimal price distribution is not of particular interest. The quantity of interest is the form of the revenue function	, i.e., the expected payment, resulting from this random price bidding. Let us denote the expected payment to producer $i$ based on the optimal random price by $\pi_i(C_i, \bm{C}_{-i}) = \E_{\bm{p} \sim \mathcal{P}^*_1 \times \ldots \times \mathcal{P}^*_N } \pi_i(C_i, \bm{C}_{-i},  p_i, \bm{p}_{-i})$. Proposition \ref{prop:expected_profit} characterizes the optimal payment to each producer:
\begin{proposition}\label{prop:expected_profit}
	Given any solution $(C_1, C_2, \ldots, C_N)$ having $C_1 \leq C_2 \leq \ldots \leq C_N$, the expected payment received by producer $i$ in the equilibrium of the pricing sub-game is given by:
	\begin{align}
	\pi_i(C_1, \ldots, C_N) & = \pi_N(C_1, \ldots, C_N) \frac{C_i \E[\min(Z_i, \frac{D}{C_i})] }{C_N \E[\min(Z_N, \frac{D}{C_N})]}\label{eqn_paymentasymmetric} .
	\end{align}
	{Moreover, the expected payment received by producer $N$ with the largest capacity investment is given by:
	\begin{equation}\label{eq:eq_pi}
\pi_N (C_N, \bm{C}_{-N}) =\E_D {\E}_{Z_N, Z_{-N}} \min\{(D-\sum_{j \neq N} Z_{j}C_{j})^{+}, Z_NC_N \}.
\end{equation}}
\end{proposition}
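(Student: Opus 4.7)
The plan is to characterize the mixed-strategy equilibrium of the Bertrand pricing sub-game explicitly and then read off the expected payments. First, I would argue that in equilibrium each producer's bid distribution $\mathcal{P}^*_i$ has support contained in a common interval $[\underline{p},1]$. The upper endpoint is $1$ because Assumption \textbf{[A2]} makes any bid strictly above $1$ dominated by the outside option; the lower endpoint must be common across producers, since a producer whose support extended strictly below the others' could shift mass upward without sacrificing any expected sales and strictly increase its revenue. A standard undercutting argument rules out atoms in the interior $(\underline{p},1)$.

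Second, I would exploit the indifference property of mixed equilibria: within the support of $\mathcal{P}^*_i$, every bid yields the same expected revenue $\pi_i$. Evaluating at the lower endpoint $\underline{p}$, producer $i$ almost surely undercuts every competitor and therefore sells its full feasible quantity $\min(Z_iC_i, D)$, giving $\pi_i = \underline{p}\cdot C_i\,\E[\min(Z_i, D/C_i)]$ for every $i$. Taking the ratio over $i$ and $N$ immediately yields equation \eqref{eqn_paymentasymmetric} once $\pi_N$ is pinned down.

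Third, I would evaluate the largest producer's revenue at the upper endpoint $p=1$ to pin down $\pi_N$. When producer $N$ bids $1$, the atomlessness from step one implies that all other producers almost surely bid strictly lower and serve demand first, each up to its realized generation $Z_j C_j$. Producer $N$ therefore faces residual demand $(D-\sum_{j\neq N} Z_j C_j)^+$, of which it can supply at most $Z_N C_N$; since this residual is sold at price $1$, its expected revenue equals the right-hand side of equation \eqref{eq:eq_pi}. Substituting back into $\pi_i = \underline{p}\cdot C_i\,\E[\min(Z_i, D/C_i)]$ with $\underline{p} = \pi_N / (C_N\,\E[\min(Z_N,D/C_N)])$ completes the derivation.

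The main obstacle will be rigorously establishing the structural claims about the equilibrium --- a common lower support endpoint, absence of interior atoms, and the fact that producer $N$ places positive mass (or at least has $1$ as a limit point of its support) at $p=1$ so that the indifference computation there is valid. These properties are standard in deterministic Bertrand--Edgeworth games but must be adapted to the random-capacity setting; Assumption \textbf{[A1]} (bounded continuous densities for $Z_i$) together with \textbf{[B1]} ensures that the residual demand distribution is sufficiently regular for the profitable-deviation arguments used in the two-player analysis of \cite{TaylorEtAl16} to extend to $N$ producers. A secondary subtlety is the tie-breaking convention when multiple producers share the largest capacity, which requires treating $\pi_N$ symmetrically across all such producers but does not affect the formula.
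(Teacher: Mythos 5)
Your overall strategy is the same as the paper's: establish a common support $[\underline{p},1]$ for the mixed strategies, use indifference at the lower endpoint to get $\pi_i=\underline{p}\,C_i\E[\min(Z_i,D/C_i)]$ and hence the ratio formula \eqref{eqn_paymentasymmetric}, and evaluate producer $N$'s payoff at $p=1$ to obtain \eqref{eq:eq_pi}. However, there is a genuine gap in your third step. You write that ``the atomlessness from step one implies that all other producers almost surely bid strictly lower'' when producer $N$ bids $1$, but step one only rules out atoms in the \emph{interior} $(\underline{p},1)$; it says nothing about atoms at $p=1$ itself, which is exactly where an atom can (and does) live. As written, your argument would apply verbatim to \emph{any} producer $i$ --- ``when $i$ bids $1$, everyone else bids strictly lower, so $\pi_i$ equals the residual-demand expression'' --- and that conclusion for all $i$ simultaneously is inconsistent with the ratio formula \eqref{eqn_paymentasymmetric} whenever the $C_i$ are not all equal. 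So something must single out producer $N$, and your proposal never supplies that argument; you flag it as a technicality to be imported from the two-player analysis, but it is in fact the main substantive step of the proof and the only place the ordering $C_1\le\cdots\le C_N$ is used.

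The paper closes this gap as follows. First, at most one producer can have an atom at the upper support (otherwise one of them profitably deviates to an atom at $1-\epsilon$). Second, suppose for contradiction the atom belongs to some $i\ne N$ with $C_i<C_N$. Then $i$, being served last when it bids $1$, earns $\pi_i=\E[\min\{(D-\sum_{j\ne i}Z_jC_j)^+,Z_iC_i\}]$, and plugging this into \eqref{eqn_paymentasymmetric} forces
\begin{equation*}
\pi_N(\bm{C})<\E\bigl[\min\{(D-\textstyle\sum_{j\ne N}Z_jC_j)^+,Z_NC_N\}\bigr],
\end{equation*}
using $C_i<C_N$, the identical distribution of $Z_i$ and $Z_N$, and $G_N\le G_i$. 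But the right-hand side is a lower bound on what producer $N$ earns by deviating to the deterministic bid $p_N=1$, contradicting equilibrium. Hence only producer $N$ has the atom at $1$, and only for producer $N$ is the ``served last'' computation valid. You should add this deviation argument (or an equivalent one) to make the identification of the atom-holder rigorous; the rest of your derivation then goes through as in the paper.
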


	{As a consequence of Proposition~\ref{prop:expected_profit}, we have that if the capacities are symmetric at equilibrium, i.e., $C_1 = C_2 = \cdot = C_N$, and $Z_i$'s are identically distributed for all $1 \leq i \leq N$, the expected payment of producer $i$ is given by:
	$$	\pi_i (C_i, \bm{C}_{-i}) =\E_D {\E}_{Z_i, Z_{-i}} \min\{(D-\sum_{j \neq i} Z_{j}C)^{+}, Z_iC_i \}.	$$}

A complete proof of Proposition \ref{prop:expected_profit} is deferred to the Appendix. Let us now understand Proposition \ref{prop:expected_profit} for the symmetric investment solution. Equation~\eqref{eq:eq_pi} denotes the payment received by producer $i$ when it bids deterministically at price $p_i = 1$ and all of the other producers bid according to their mixed pricing strategy. By assumption A2, this player bids at the highest possible price. Then the amount of energy sold equals the minimum of the leftover-demand from the market {($\E (D-\sum_{j \neq i} Z_{j}C_{j})^{+})$} and the player's actual production ($C_iZ_i$). Since $p_i = 1$ belongs to the support of the mixed pricing strategy adopted by this player, one can use well known properties of mixed Nash equilibrium~\cite{AcemogluEtAl09, TaylorEtAl16} to argue that producer $i$'s payment at this price equals the expected payment received by this producer at the equilibrium for the pricing sub-game.

\section{Evaluation metrics}\label{sec:eval}

\subsection{Social planner's problem}\label{subsec:social}
One essential characteristic of a game is its cost as compared to a centralized decision. In this section, we present the benchmark cost that we consider; in particular, we focus on the social cost minimization achieved by a social planner controlling the producers. In Section \ref{sec:eval_decentralized} we give more details on the definition of game efficiency as compared to this benchmark.

Suppose that these producers are managed by a social planner in a centralized manner. The purpose of the social planner is to fulfill demand while minimizing the total cost by deciding the investment capacities of the producers. {Since we approximate the continuous actions of energy buying and selling from PV producer to a one-time action as in Section \ref{subsec_cap_game_intro},} the social planner thus wants to minimize \emph{social cost} in the following form:
\begin{equation}\label{eq:social_opt}
\bm{C}^* = \argmin_{C_i \geq 0, \forall i \in \{1, 2, ..., N\}} \  \sum_{i=1}^N \gamma C_i +  \E \{(D-\sum_{i=1}^N Z_iC_i)^+ \},
\end{equation}
{where $\bm{C}^* = [C_1^*, C_2^*, ..., C_N^*]$ is the optimal capacity decision from the social planner for each producer $i$, and expectation is over all randomness, i.e., $Z_i$'s and $D$. In what follows, we adopt this routine if not specifcied otherwise.} {The social cost presented in \eqref{eq:social_opt} is composed of two terms. The first term is the total investment cost which is linear in the capacities, and the second term is the imbalance cost in buying energy from electricity grid if the renewables cannot satisfy the demand. These two terms represent the tradeoff between investing energy resources and buying energy from conventional generators in order to meet the electricity demand.}

\subsection{Performance of the decentralized market}
\label{sec:eval_decentralized}

Given the definition of the equilibrium solutions due to both price and capacity competition, a natural question is to evaluate the performance of the decentralized market: i.e., \emph{does competition result in efficiency?}. As mentioned previously, we measure this efficiency via two metrics: the social cost of the decentralized capacity investment compared to that achieved by the social planner, and the total investment cost compared to the payments made by the demand.

\noindent \textbf{Example.} Let us consider a one-player case with {deterministic demand $D$}, where there is only one producer participating in the electricity market. 
We further assume that the random output of this plant follows a uniform distribution, i.e., $Z_1 \sim \text{unif}(0, 1)$. Suppose that $\gamma < \frac{1}{2}$, otherwise there is no incentive to enter the market. The social planner's optimization is reduced to :
\begin{equation}
	{C}^*_1 = \argmin_{C_1} \  \gamma C_1 + \E (D-Z_1C_1)^+,
\end{equation} where $C_1^* = \sqrt{\frac{D^2}{2\gamma}}$. In this case, the total investment cost is $D\sqrt{\frac{\gamma}{2}}$: in a centralized scenario, one can imagine that this is the price charged by the social planner to the demand, and thus there is no `markup'.

Let us now take a look at the decentralized market. Since there is only one producer, the decentralized investment strategy clearly coincides with that of the social planner. The payment from the demand to the producer as per~\eqref{eq:eq_pi} is $\E \min\{D, Z_1 C_1^{\diamond}\}  = D (1 - \sqrt{\frac{\gamma}{2}}) > D\sqrt{\frac{\gamma}{2}}$ (when $\gamma <\frac{1}{2}$). This suggests that the producer is exploiting its market power to considerably improve its profit and the benefits of renewables are not being transferred to the consumers.


\noindent \textbf{Market Efficiency}
As noticed in the above example, inefficiency arises due to the high prices felt by the demand in the decentralized market. Formally, we define market efficiency as the ratio between the investment cost paid by the producers to the total payment received by the producers at any equilibrium of the capacity price game. Therefore, efficiency takes the following form:
\begin{equation}\label{eq:eff}
	\xi = \frac{\gamma \sum_{i = 1}^N C^{\diamond}_i }{\sum_{i = 1}^N\pi(C^{\diamond}_i, \bm{C}^{\diamond}_{-i})}.
\end{equation}


A ``healthier'' game should achieve a higher $\xi$ that is as close as to 1. 
This means that the producers should bid at the prices that cover their investment cost, so that bidding is efficient and does not take advantage of the electricity consumers. A particularly interesting question is whether competition leads to increased efficiency as the number of producers in the market increases. We formalize this notion below.

\begin{definition}\label{def:eff}
	We define the efficiency of a Nash equilibrium in a capacity game illustrated in \eqref{eq:cap_game} by $\xi$.
	The capacity game is asymptotically efficient when $\xi \rightarrow 1$ as $N \rightarrow \infty$ for every Nash equilibrium.
\end{definition}


{Now the question of interest is 1) whether uncertainty in generation deteriorates or improves the market efficiency of the game, and 2) whether efficiency increases as the number of players in the game increases. In the following sections, we will see that without randomness in the generation, the producers are able to charge a relatively high price for energy, which makes the game less efficient. Interestingly, when producer's generation becomes uncertain, the game becomes more efficient as more producers are involved in the decentralized market.}

\emph{Inefficiency due to Social Cost:} When there are multiple producers, it is possible that even the investment decisions may not coincide with that of the social planner. Therefore, a second source of inefficiency is the social cost due to the capacity investment, as defined in~\eqref{eq:social_opt}. More concretely, we compare the social cost of the equilibrium solution $({C}^{\diamond}_i)$ with that of the social cost of the planner's optimal capacity $({C}^*_i)$--- clearly, the latter cost is smaller than or equal to the former.

\subsection{Deterministic game}
Before moving on to the main results, we highlight the (in)efficiency of the equilibrium in the \emph{deterministic} version of the capacity game, i.e., one without production uncertainty where $Z_i = 1$ with probability one. Understanding the inefficiency of this deterministic game is the starting point for us to better gauge the effects of uncertainty in investment games.

We begin with the social planner's problem with fixed $D$, which in the absence of uncertainty can be formulated as follows:
\begin{equation}
	\min_{C_i \geq 0, \forall i \in \{1, 2, ..., N\}} \gamma \sum_{i = 1}^N C_i + (D - \sum_{i = 1}^NC_i)^+.
\end{equation}

Every solution with non-negative capacities that satisfies\\$\sum_{i=1}^N C^*_i = D$ optimizes the above objective --- this includes the symmetric solution  $C^*_1 = C^*_2 = \dots = C^*_N = \frac{D}{N}$. Moving on to the decentralized game with deterministic energy generation, we can directly characterize the equilibrium solutions using the results from~\cite{AcemogluEtAl09}.  Specifically, by applying Proposition 13 in that paper, we get although there are multiple equilibrium solutions, every such solution $(C_i)_{1 \leq i \leq N}$ satisfies $(i)$ $\sum_{i=1}^N C_i = D$, and $(ii)$ $\pi_i(C_i, \bm{C}_{-i}) = C_i$.  The second result implies that at every equilibrium, each producer charges a price that is equal to the electricity price of one from the main grid. Finally, by applying~\eqref{eq:eff}, we can characterize the efficiency in terms of the investment cost $\gamma$:
%
%
%
%

\begin{equation}
	\label{eq:effdet}
	\xi = \frac{\gamma D}{ D} = \gamma.
\end{equation}

\emph{Why is this result undesirable?} First note that when $\gamma < 1$,~\eqref{eq:effdet} implies that the deterministic game is inefficient at \emph{every Nash equilibrium}. In fact, using the results from~\cite{AcemogluEtAl09}, one can deduce that the system is inefficient even when different producers have different investment costs. Perhaps more importantly, the costs of investment as well as the market price of renewable energy have dropped consistently over the past decade and are expected to continue doing so in the future~\cite{taylor2015renewable,barbose2017tracking,margolis2017q4}. In this context, Equation~\eqref{eq:effdet} has some stark implications, namely that \emph{as $\gamma$ (the investment price) drops in the long-run, the efficiency actually becomes worse ($\xi \to 0$ as $\gamma \to 0$)}, i.e., the improvements in renewable technologies do not benefit the electricity consumers.



\section{Main results}\label{sec:mainresults}
To better illustrate the results, In this section, we first assume that $\gamma_i$'s are the same across all producers and characterize the capacity decision from the social planner's problem. We then illustrate the relationship between the decentralized market, and the social planner's problem in the centralized market. We also give a thorough analysis on the efficiency of the decentralized market. We begin by considering the case where the capacity generated by the producers are independent of each other and then move on to the correlated case. In the end of this section, we extend the result to asymmetric $\gamma_i$'s. All of the proofs from this section can be found in the appendix.
\subsection{Social planner's optimal decision}
An immediate observation of the socially optimal capacity as described in \eqref{eq:social_opt} is that if the randomness is independent and identical across different producers, the socially optimal capacity is symmetric:
\begin{theorem}\label{th_symm2}
	{If the random variables $Z_i$ are i.i.d. and satisfy assumption A1 and $\gamma_i = \gamma, \forall i$, then the optimal capacity obtained by \eqref{eq:social_opt} is symmetric, i.e., $C_1^* = C_2^* = \cdots = C_N^* = C^*$.}
\end{theorem}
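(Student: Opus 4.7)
The plan is to establish two structural properties of the social cost
$f(\bm{C}) = \gamma \sum_{i} C_i + \E[(D - \sum_{i} Z_i C_i)^+]$ from~\eqref{eq:social_opt} --- namely, convexity in $\bm{C}$ and invariance under coordinate permutations --- and then combine them via a symmetrization argument. Convexity of $f$ is immediate: the linear term $\gamma \sum_i C_i$ is convex, and for each realization of $(D, Z_1, \ldots, Z_N)$ the integrand $(D - \sum_i Z_i C_i)^+$ is the composition of the convex map $t \mapsto \max(t,0)$ with an affine function of $\bm{C}$, so the expectation is convex as well. Under $\gamma_i = \gamma$, the linear term is trivially symmetric in $\bm{C}$; for the expectation term, the i.i.d.\ assumption makes the joint law of $(Z_1, \ldots, Z_N)$ invariant under any permutation $\sigma$, so a relabeling of the summation index gives $f(C_{\sigma(1)}, \ldots, C_{\sigma(N)}) = f(C_1, \ldots, C_N)$.

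Given these two properties, the symmetrization step is short. Let $\bm{C}^\star$ be any minimizer of $f$. By permutation invariance, every vector $(C^\star_{\sigma(1)}, \ldots, C^\star_{\sigma(N)})$ is also a minimizer; averaging over the $N!$ permutations and applying Jensen's inequality (via convexity) yields that this average is a minimizer as well. The average equals the symmetric vector $(\bar C, \ldots, \bar C)$ with $\bar C = \frac{1}{N}\sum_i C_i^\star$, which produces a symmetric optimum. To upgrade this to the stronger claim that \emph{every} optimum is symmetric (so that the theorem's ``$C_1^* = \cdots = C_N^*$'' reading is justified), I would invoke strict convexity of $f$ along non-symmetric directions, leveraging assumption A1: for any direction $v \neq 0$, the scalar random variable $v \cdot \bm{Z}$ has a continuous density, which smears out the kinks of the integrand and makes the expectation strictly convex along $v$ at any interior point.

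The main obstacle is precisely this last step: upgrading weak convexity to strict convexity. The pointwise integrand $(D - \bm{Z}\cdot\bm{C})^+$ is only piecewise linear, so strict convexity cannot come from the integrand itself but must be extracted from the smoothing effect of the expectation under A1. Concretely, for any two distinct candidate optima $\bm{C}^{(1)} \neq \bm{C}^{(2)}$, I would examine $t \mapsto f\bigl((1-t)\bm{C}^{(1)} + t \bm{C}^{(2)}\bigr)$ on $[0,1]$ and argue it is strictly convex, by showing that $\bm{Z} \cdot (\bm{C}^{(2)} - \bm{C}^{(1)})$ has a continuous non-degenerate density, preventing $(D - \bm{Z}\cdot\bm{C}(t))^+$ from being almost-surely linear in $t$ throughout the whole interval. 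This smoothing step, rather than the convexity or symmetry arguments, is the genuine technical content of the theorem; the rest of the proof is a clean permutation-averaging argument.
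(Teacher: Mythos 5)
Your proof is correct for what the paper actually establishes, but it reaches the conclusion by a genuinely different route. The paper fixes the aggregate capacity $C^*_{tot}$, forms the $N$ \emph{cyclic shifts} $X_i = \sum_j Z_j C_{i+j-1}/N$, proves the pointwise inequality $\min(d,\sum_i x_i) \geq \sum_i \min(d/N, x_i)$ realization by realization (its Proposition 3), and then uses exchangeability of the i.i.d.\ $Z_i$'s to equate $\E[\min(D/N, X_i)]$ across $i$; since $N\min(D/N,X_1)=\min(D,\sum_j Z_j C_j)$, this shows the symmetric allocation serves at least as much expected demand as any allocation with the same total. You instead observe that the social cost is jointly convex in $\bm{C}$ and invariant under permutations, average a minimizer over all $N!$ permutations, and apply Jensen. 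The two arguments are morally the same symmetrization --- the paper's Proposition 3 is a hand-rolled substitute for the concavity of $\min(d,\cdot)$ that your Jensen step exploits, and both need the exchangeability of $(Z_1,\dots,Z_N)$ (and, implicitly in both, that $D$'s joint law with $\bm{Z}$ is permutation-invariant). Your version is shorter and avoids the case analysis; the paper's version is more elementary and transfers verbatim to the correlated case in its Theorem 5, where it only needs to re-verify exchangeability.

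One caution on your final step: the paper proves only that \emph{a} symmetric minimizer exists, and that is all it uses downstream (its Theorem 5 explicitly reads the statement as ``the symmetric vector is a valid solution''). Your attempt to upgrade to ``every minimizer is symmetric'' via strict convexity along non-symmetric directions is not needed, and as written it is a sketch with a real gap: $(D-\bm{Z}\cdot\bm{C})^+$ is piecewise linear, so strict convexity of the expectation along a segment requires showing that the event $\{0 < D - \bm{Z}\cdot\bm{C}(t)\}$ changes with positive probability as $t$ varies, which you assert from assumption A1 but do not establish (and which can fail away from the optimum, e.g.\ where the supply never reaches $D$). Either drop that paragraph or confine the claim to the existence of a symmetric optimum, which your first three steps already deliver cleanly.
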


{Theorem \ref{th_symm2} states that when the investment cost per unit capacity is the same across all producers, and the random variable is i.i.d., then the optimal decision for the social planner is to treat all producers equally and invest the same amount of capacity for each producer. In reality, the randomness due to renewable sources can be correlated and Section \ref{subsec: correlation} shows that Theorem~\ref{th_symm2} stills holds under some conditions on the nature of the correlation.


	\subsection{Existence and Social Cost}\label{sec:comp}
	Now that we have captured the structure of the socially optimal capacity decision, we want to address the issue of whether or not the capacity price game admits Nash equilibrium solutions in the decentralized market. A second question concerns the social cost of Nash equilibria when compared to the optimum investment decision adopted by a social planner.  As discussed in Section~\ref{sec:eval_decentralized}, one of the two sources of inefficiency in decentralized stems from the fact that the social cost of equilibrium solutions may be larger than that of the central planner's solution. {We present Theorem~\ref{th:social_nash} which addresses both of these questions by proving the existence of {a Nash equilibrium that coincides with the socially optimal capacity decision. Characterizing other Nash equilibriums is left to Theorem \ref{maintheorem} in Section \ref{subsec:eff}.}}

	\begin{theorem}\label{th:social_nash}
		There is a Nash equilibrium that satisfies \eqref{eq:cap_game_nash},  which also minimizes the social cost. That is, $(C^*, C^*, \ldots, C^*)$ is a Nash equilibrium.
	\end{theorem}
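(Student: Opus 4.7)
The plan is to verify the Nash equilibrium condition in \eqref{eq:cap_game_nash} by showing that when every other producer plays $C^*$, producer $i$'s profit $\pi_i(C_i, \bm{C}_{-i}) - \gamma C_i$ is maximized at $C_i = C^*$. The central observation is that at the symmetric point, the social planner's first-order optimality condition coincides with producer $i$'s first-order best-response condition, and concavity then upgrades this stationarity to a global optimum. First I would extract the planner's FOC: since $(D - \sum_j Z_j C_j)^+$ is convex in $\bm{C}$ for each realization, the objective in \eqref{eq:social_opt} is convex, and Theorem \ref{th_symm2} identifies $(C^*, \ldots, C^*)$ as the minimizer. Differentiating with respect to $C_i$ and evaluating at this minimizer yields
\[
\gamma \;=\; \E \bigl[\, Z_i \, \mathbb{1}\{ D > \textstyle\sum_j Z_j C^* \}\, \bigr], \qquad (\star)
\]
which is the identity that will be matched by the producer's best-response condition.

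Next, with $\bm{C}_{-i} = (C^*, \ldots, C^*)$ fixed, I would first consider $C_i \geq C^*$, where producer $i$ is (tied for) the largest. By \eqref{eq:eq_pi}, $\pi_i(C_i) = \E \min\{W, Z_i C_i\}$ with $W := (D - \sum_{j \neq i} Z_j C^*)^+$ independent of $C_i$, so $\pi_i(C_i)$ is concave in $C_i$ (as the expectation of a function that is pointwise concave in $C_i$). The derivative at $C_i = C^*$ reduces, after a quick case split on the sign of $D - \sum_{j \neq i} Z_j C^*$, to $\E[Z_i \mathbb{1}\{\sum_j Z_j C^* < D\}]$, which equals $\gamma$ by $(\star)$. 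Concavity together with this matching FOC shows that $C_i = C^*$ is the unique maximizer of $\pi_i(C_i) - \gamma C_i$ on $[C^*, \infty)$. For $0 \leq C_i < C^*$, producer $i$ has the smallest capacity, so \eqref{eqn_paymentasymmetric} applies; I would use it to verify continuity at $C_i = C^*$ and to establish both concavity of $\pi_i$ on $[0, C^*]$ and that the left derivative of $\pi_i$ at $C^*$ also equals $\gamma$. Combined with the preceding case, this gives a profit function that is continuous and concave on $[0, \infty)$ with a stationary point at $C^*$, so $C_i = C^*$ is indeed the global best response.

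The main obstacle is the sub-case $C_i < C^*$. Unlike Case A, formula \eqref{eqn_paymentasymmetric} has two components that both depend on $C_i$, namely the large-producer payoff $\pi_N(C_i)$ and the ratio $\E[\min(C_i Z_i, D)] / \E[\min(C^* Z_N, D)]$. Moreover, one should expect $\pi_i(C_i) > \E \min\{W, Z_i C_i\}$ for $C_i < C^*$ in general, since a smaller producer can extract additional rents by undercutting in the mixed Bertrand equilibrium; consequently one cannot simply bound $\pi_i(C_i)$ from above by the Case A expression and reuse its concavity argument directly. The technical heart of the proof is therefore to analyze \eqref{eqn_paymentasymmetric} carefully, using properties of the mixed-strategy equilibrium of the price sub-game to confirm left-derivative matching at $C^*$ and concavity on $[0, C^*]$.
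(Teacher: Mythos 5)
Your Case~A ($C_i \geq C^*$) is sound: the planner's first-order condition $\gamma = \E\bigl[Z_i\,\mathbbm{1}\{D>\sum_j Z_jC^*\}\bigr]$ does coincide with the derivative of $\E\min\{W,Z_iC_i\}$ at $C_i=C^*$, and that function is concave in $C_i$. But the proposal stalls exactly where you locate its ``technical heart'': for $C_i<C^*$ you announce that one must verify left-derivative matching and concavity of the ratio formula \eqref{eqn_paymentasymmetric} on $[0,C^*]$, and you do not carry this out. As written, downward deviations are not ruled out, so this is a proof plan rather than a proof. Your observation that a small deviator's payment could exceed $\E\min\{W,Z_iC_i\}$ is precisely the step that would need to be closed (or refuted) for your route to go through.

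The paper's own argument is both different and much shorter: it uses no derivatives, no concavity, and no case split on whether the deviator is largest. It takes the deviator's revenue to be $\E\min\{(D-\sum_{j\neq i}Z_jC^*_j)^+,\,Z_iC_i\}$ for \emph{every} $C_i$ (the ``leftover demand at price $1$'' expression from Proposition~\ref{prop:expected_profit}) and then exploits the pointwise identity $\min(a,b)=\min(a^+,b)+a^-$ valid for $b\geq 0$, with $a=D-\sum_{j\neq i}Z_jC^*_j$ and $b=Z_iC_i$. Since $\E\,a^-$ does not depend on $C_i$, the planner's objective restricted to varying $C_i$ equals $\gamma C_i-\E\min\{(D-\sum_{j\neq i}Z_jC^*_j)^+,Z_iC_i\}$ plus a constant, i.e., exactly the negative of producer $i$'s profit plus a constant; hence the coordinate-wise minimizer $C^*_i$ of the planner's problem is a global best response, in one chain of equalities. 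What the paper's route buys is that the conclusion is global and exact with no regularity assumptions; what it assumes is the uniform validity of that revenue formula under deviation --- the very point your proposal flags but does not resolve. To complete your version you would either have to justify that identification for $C_i<C^*$, or work directly with \eqref{eqn_paymentasymmetric} and show its profit is dominated by the value at $C^*$; absent one of these, the gap remains.
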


	Therefore, existence is always guaranteed in our setting. More importantly, {Theorem \ref{th:social_nash} provides an interesting relationship between the centralized decision that minimizes social cost, and the decentralized decision where producers seek to maximize profit. It states that the game yields a socially optimal capacity investment solution as if there were a social planner controlling the producers.} {In addition, as we will show later in Section \ref{sec:uniqueEQ}, this Nash equilibrium is the unique symmetric equilibrium in the capacity game. For the following sections, we use $C^*$ to denote both the socially optimal capacity decision and this Nash equilibrium.}

	\subsection{Efficiency of Nash equilibrium}\label{subsec:eff}
	Although the capacity price game studied this work admits a Nash equilibrium that minimizes the social planner's objective, there may also exist other equilibria that result in sub-optimal capacity investments.
	How do these (potential) multiple equilibria look like from the consumers' perspective, i.e., is the price charged to consumers larger than the investment? In this section, we show a surprising result: the two-level capacity-pricing game is asymptotically efficient. That is,
	as $N \to \infty$, the total payment made to the producers approaches the investment costs for \emph{every Nash equilibrium}. The reason for this startling effect is that as the number of producers competing against each other in the market increases, with the presence of uncertainty, the market power of these producers decreases and the efficiency of the game equilibrium increases. We first present our main theorem with i.i.d. generation.


	\begin{theorem}
		\label{maintheorem}
		Let $({C}^{\diamond}_1, {C}^{\diamond}_2, \ldots, {C}^{\diamond}_N)$ denote any Nash equilibrium solution in an instance with $N$ producers and {$N > \frac{1}{\frac{D_{min}}{D_{max}}\gamma}$}, where $\gamma_i = \gamma, \forall i$. Then, as long as the $Z_i$'s are i.i.d and satisfy assumption A1, we have that:
		$$\sum_{i=1}^N \pi_i({C}^{\diamond}_1, {C}^{\diamond}_2, \ldots, {C}^{\diamond}_N) \leq \gamma \sum_{i=1}^N {C}^{\diamond}_i + \alpha N^{-c},$$

		where $\alpha, c > 0$ are constants that are independent of $N$. Therefore, as $N \to \infty$, $\xi \to 1$, where $\xi$ denotes the market efficiency due to \emph{any} Nash equilibrium solution.
	\end{theorem}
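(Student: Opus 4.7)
The plan is to combine the Nash first-order condition with the payment formula of Proposition~\ref{prop:expected_profit} to decompose each equilibrium payment $\pi_i(C_i^\diamond)$ into a ``competitive'' part equal to $\gamma C_i^\diamond$ plus a ``markup'' $M_i$ that will be shown to vanish with $N$. I would start from the symmetric-capacity form $\pi_i(C_i^\diamond)=\E\min\{(D-S_{-i})^+,Z_iC_i^\diamond\}$ with $S_{-i}:=\sum_{j\ne i}Z_jC_j^\diamond$, and split the expectation according to which argument of the minimum binds:
\begin{align*}
\pi_i(C_i^\diamond)&=\E\bigl[Z_iC_i^\diamond\mathbf{1}\{Z_iC_i^\diamond\le(D-S_{-i})^+\}\bigr]\\
&\quad+\E\bigl[(D-S_{-i})^+\mathbf{1}\{Z_iC_i^\diamond>(D-S_{-i})^+\}\bigr].
\end{align*}
Differentiating under the expectation, the first summand equals $C_i^\diamond\cdot\partial_{C_i}\pi_i(C_i^\diamond)$, which by the Nash FOC of~\eqref{eq:cap_game_nash} collapses to $\gamma C_i^\diamond$. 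Hence $\pi_i(C_i^\diamond)=\gamma C_i^\diamond+M_i$, where $M_i=\E[(D-S_{-i})^+\mathbf{1}\{0<D-S_{-i}\le Z_iC_i^\diamond\}]$ is the expected residual demand confined to the thin ``pivotal'' window $S_{-i}\in[D-Z_iC_i^\diamond,\,D]$.

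Next I would upper-bound each $M_i$. Since $(D-S_{-i})^+\le Z_iC_i^\diamond$ on the relevant event, conditioning on $(Z_i,D)$ gives $M_i\le (C_i^\diamond)^2\,\E[Z_i^2]\,\|f_{S_{-i}}\|_\infty$, where $\|f_{S_{-i}}\|_\infty$ denotes the supremum of the density of $S_{-i}$. Assumption~A1 supplies each $Z_j$ with a bounded continuous density and positive variance, which lets a local-CLT / Berry--Esseen argument control the density of the convolution at the scale $\|f_{S_{-i}}\|_\infty=O\bigl(1/\sqrt{\sum_{j\ne i}\sigma^2(C_j^\diamond)^2}\bigr)$. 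Summing therefore yields
\[
\sum_{i=1}^N\pi_i(C_i^\diamond)-\gamma\sum_{i=1}^NC_i^\diamond \;\le\; \E[Z^2]\,\max_i\|f_{S_{-i}}\|_\infty\,\sum_{i=1}^N(C_i^\diamond)^2.
\]

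The final step is to translate the right-hand side into an $N^{-c}$ rate. The FOC gives $\gamma\le\mu\,P(S_{-i}\le D)$, so $P(S_{-i}\le D)\ge\gamma/\mu$; a Chebyshev bound on $S-\E S$ then forces $\mu\sum_jC_j^\diamond$ to lie within $O\bigl(\sqrt{\sum_j(C_j^\diamond)^2}\bigr)$ of the demand $D\in[D_{\min},D_{\max}]$, giving an $N$-independent total-capacity bound $\sum_jC_j^\diamond\le K(\mu,\gamma,D_{\max},D_{\min})$; the threshold $N>1/((D_{\min}/D_{\max})\gamma)$ is precisely what keeps this bound active. A second, producer-level FOC argument---a producer holding $\Theta(1)$ capacity would be pivotal with probability vanishing in $N$, contradicting $\gamma>0$---should force $\max_iC_i^\diamond=O(1/N)$. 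Combining, $\sum_i(C_i^\diamond)^2=O(1/N)$ and $\max_i\|f_{S_{-i}}\|_\infty=O(\sqrt{N})$, so the aggregate markup is $O(N^{-1/2})$, establishing the theorem with $c=\tfrac12$ and $\alpha$ depending only on $\mu,\sigma,\gamma,D_{\min},D_{\max}$.

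The main obstacle is twofold: (i) establishing the uniform local-CLT-style density bound on $S_{-i}$---Assumption~A1 is exactly what enables the convolution smoothing to deliver this---and (ii) ruling out ``oligarchic'' asymmetric Nash equilibria in which one or a few producers retain $\Theta(1)$ capacity as $N$ grows, since without such a bound the summed inequality above fails to give the desired $N^{-c}$ decay. Handling the general, possibly asymmetric equilibrium also requires substituting the clean symmetric formula for $\pi_i$ with the ratio form of Proposition~\ref{prop:expected_profit} when computing $\partial_{C_i}\pi_i$, though the competitive-part-plus-pivotal-window decomposition is expected to survive with the same structure.
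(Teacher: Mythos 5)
Your overall strategy---use the Nash first-order condition to split each equilibrium payment into a competitive part $\gamma C_i^{\diamond}$ plus a ``pivotal-window'' markup, then kill the markup with a CLT-type anti-concentration bound---is the same engine that drives the paper's proof. The paper's version of your $M_i$ is the term $B = \E[\mathbbm{1}\{0 \leq D - \sum_{i\neq N} C_i^{\diamond} Z_i \leq C_N^{\diamond} Z_N\} Z_1]$ appearing in its rearranged FOC, and it too is bounded via Berry--Esseen. However, your route to the final $N^{-c}$ rate has a genuine gap at exactly the point you flag as an obstacle: you need $\max_i C_i^{\diamond} = O(1/N)$, and the argument you sketch for it (``a producer holding $\Theta(1)$ capacity would be pivotal with probability vanishing in $N$'') does not work. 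The FOC for the largest producer is $\E[\mathbbm{1}\{(D - S_{-N})^{+} \geq C_N^{\diamond} Z_N\} Z_N] = \gamma$, which concerns the event that residual demand \emph{exceeds} its output, not the pivotal window; this can hold with $C_N^{\diamond} = \Theta(1)$ whenever the remaining producers' aggregate capacity is small. Indeed the paper only ever proves $C_1^{\diamond} \leq \E D/(\gamma N)$ for the \emph{smallest} capacity and $C_N^{\diamond} \leq \E D/\gamma$ for the largest (Lemma~\ref{lem_capupperbound}), and its two-case split on whether $\sum_{i<N} C_i^{\diamond}/C_N^{\diamond}$ exceeds $\kappa N^{3/4}$ exists precisely because oligarchic equilibria cannot be excluded: in the oligarchic case the markup is controlled not by anti-concentration but by the smallness of the ratios $C_i^{\diamond}/C_N^{\diamond}$ together with $C_1^{\diamond} = O(1/N)$.

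Two further points. First, your density bound $\|f_{S_{-i}}\|_\infty = O(1/\sqrt{\sum_j \sigma^2 (C_j^{\diamond})^2})$ is a local-CLT statement, strictly stronger than Berry--Esseen (which controls the CDF, not the density, of a non-identically-weighted sum); the paper avoids needing it by normalizing by $C_N^{\diamond}$ so the pivotal window has width one and then applying Berry--Esseen plus the Lipschitz bound on the Gaussian CDF (Lemma~\ref{lemma_gaussian_diff}). More importantly, in the oligarchic regime your bound degenerates: for producer $N$, the sum $S_{-N}$ may have vanishing variance, so $\|f_{S_{-N}}\|_\infty$ can blow up and $M_N \leq (C_N^{\diamond})^2 \E[Z^2]\|f_{S_{-N}}\|_\infty$ becomes vacuous. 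Second, for asymmetric equilibria the payment $\pi_i$ for $i < N$ is the ratio expression of Proposition~\ref{prop:expected_profit}, so $\partial_{C_i}\pi_i$ is not simply $\E[Z_i \mathbbm{1}\{\cdot\}]$ and your clean decomposition $\pi_i = \gamma C_i^{\diamond} + M_i$ does not carry over verbatim; the paper works the FOC through the ratio formula explicitly (Equation~\eqref{eqn_derivativeprofit1}) and lands on a markup attached to $\pi_N$ rather than to each $\pi_i$ separately. Until the oligarchic case is handled---either by an argument the paper does not possess, or by adopting its case split---the proof is incomplete.
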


	%
	Combining Theorems~\ref{th:social_nash} and~\ref{maintheorem} yields that if we restrict the game to only have the symmetric equilibrium, then the equilibrium minimizes the social cost and the game is asymptotically efficient. Moving beyond the symmetric equilibrium, Theorem \ref{maintheorem} states that any Nash equilibrium obtained from the capacity game is efficient, that the collected payment (revenue) tends to exactly cover the investment cost. This further suggests that the capacity game described in \eqref{eq:cap_game} elicits the true incentive for the producers to generate energy.



	\subsection{Correlated generation}\label{subsec: correlation}
	In reality, renewable generation due to multiple entities in a power distribution network is usually correlated with each other because of geographical adjacencies. We assume that the randomness of each producer's generation can be captured as an additive model written as the following:
	\begin{equation}\label{eq:decomposition}
		Z_i = \hat{Z}_i + \bar{Z}.
	\end{equation}
	{The model in \eqref{eq:decomposition} captures the nature of renewable generation. We can interpret $\bar{Z}$ as the shared random variable for a specific region. For example, the average solar radiation for a region should be common to every PV output in that region. On the other hand, $ \hat{Z}_i$ can be seen as the individual-level random variable for the particular location of each PV plant $i$, and this random variable can be seen as i.i.d. across different locations.}

	For analytical convenience, we make the following assumptions on $Z_i$:\\
	\noindent \textbf{[A3]} Both $\bar{Z}$ and $\hat{Z}_i$ in \eqref{eq:decomposition} satisfy assumption A1, the $\hat{Z}_i$'s are i.i.d, and are independent of $\bar{Z}$ for all $i$.


	If the correlation is captured as in \eqref{eq:decomposition}, the optimal capacity decision is still symmetric, i.e., $C^*_i = C^*_j, \forall i \neq j$ is a valid solution to \eqref{eq:social_opt}. This is stated in Theorem \ref{th:symmetric_correlation}.
	\begin{theorem}\label{th:symmetric_correlation}
		{If the random variable $Z_i$ is captured as in \eqref{eq:decomposition}} and assumption A3 is satisfied, then the optimal capacity vector that minimizes the planner's social cost is symmetric, i.e., $C_1^* = C_2^* = \cdots = C_N^* = C^*$ when $\gamma_i = \gamma, \forall i$.
	\end{theorem}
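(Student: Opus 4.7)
I would prove Theorem~\ref{th:symmetric_correlation} by a standard ``convexity plus symmetrization'' argument, exactly analogous to how one would attack Theorem~\ref{th_symm2}, the only difference being that the joint distribution of $(Z_1, \ldots, Z_N)$ is now exchangeable rather than i.i.d. Denote the social cost by
\[
F(C_1, \ldots, C_N) \;=\; \gamma \sum_{i=1}^N C_i \;+\; \E\!\left[\bigl(D - \textstyle\sum_{i=1}^N Z_i C_i\bigr)^{+}\right].
\]
The objective is to show that there exists a minimizer of $F$ over $\mathbb{R}_{\geq 0}^N$ of the form $(C^*, C^*, \ldots, C^*)$.

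\textbf{Step 1 (Convexity).} First I would show that $F$ is convex in $\mathbf{C}$. For each realization of $(D, Z_1, \ldots, Z_N)$, the map $\mathbf{C} \mapsto D - \sum_i Z_i C_i$ is affine, and $x \mapsto x^{+} = \max(x,0)$ is convex, so their composition is convex in $\mathbf{C}$. Since convexity is preserved under taking expectations (provided the expectation is finite, which is guaranteed by assumption B1 on $D$ and by $Z_i \in [0,1]$), the second term of $F$ is convex. Adding the linear term $\gamma \sum_i C_i$ preserves convexity.

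\textbf{Step 2 (Exchangeability).} Next I would argue that $F$ is invariant under permutations of its arguments. Under the additive decomposition $Z_i = \hat{Z}_i + \bar{Z}$ of assumption A3, the random vector $(Z_1, \ldots, Z_N)$ is exchangeable: the $\hat{Z}_i$ are i.i.d.\ and independent of the common factor $\bar{Z}$, so for any permutation $\sigma \in S_N$, $(Z_{\sigma(1)}, \ldots, Z_{\sigma(N)})$ has the same joint distribution as $(Z_1, \ldots, Z_N)$, jointly with the independent variable $D$. Therefore the distribution of $\sum_i Z_i C_{\sigma(i)}$ coincides with that of $\sum_i Z_i C_i$, yielding $F(C_{\sigma(1)}, \ldots, C_{\sigma(N)}) = F(C_1, \ldots, C_N)$ for every $\sigma \in S_N$.

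\textbf{Step 3 (Symmetrization).} Since $\gamma > 0$, $F$ is coercive on the closed set $\mathbb{R}_{\geq 0}^N$, so a minimizer $\mathbf{C}^*$ exists. By Step 2, every permutation $\sigma(\mathbf{C}^*)$ is also a minimizer. Let
\[
\bar{\mathbf{C}} \;=\; \frac{1}{N!}\sum_{\sigma \in S_N} \sigma(\mathbf{C}^*),
\]
which is a symmetric vector (all coordinates equal to $\tfrac{1}{N}\sum_i C_i^*$). Convexity from Step 1 then gives
\[
F(\bar{\mathbf{C}}) \;\leq\; \frac{1}{N!}\sum_{\sigma \in S_N} F(\sigma(\mathbf{C}^*)) \;=\; F(\mathbf{C}^*),
\]
so $\bar{\mathbf{C}}$ is itself a minimizer. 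This exhibits a symmetric optimal solution, proving the theorem.

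\textbf{Main obstacle.} There is no deep obstacle; the argument is robust because it only uses exchangeability of the $Z_i$, not full independence. The one place to be careful is verifying that convexity survives taking the expectation in Step~1, which requires that $F$ is finite-valued on $\mathbb{R}_{\geq 0}^N$---this follows from boundedness of $D$ (assumption B1) and $Z_i \in [0,1]$ (assumption A1 applied to both $\hat{Z}_i$ and $\bar{Z}$, which gives $Z_i$ a bounded support). A secondary subtlety is that the theorem only asserts existence of a symmetric optimum, not uniqueness; strict uniqueness would require strict convexity of $F$, which the additive correlation model does not immediately yield, but the statement as given requires only the symmetrization argument above.
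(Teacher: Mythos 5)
Your proof is correct, and it reaches the conclusion by a genuinely different (and cleaner) route than the paper. The paper's own proof recycles the machinery of Theorem~\ref{th_symm2}: it rewrites the objective as maximizing $\E[\min(D,\sum_i Z_i C_i)]$, introduces the cyclic shifts $X_i=\sum_j Z_j C_{i+j-1}/N$, proves the pointwise inequality $\min(d,\sum_i x_i)\geq \sum_i \min(d/N,x_i)$ (Proposition~\ref{prop:inequality}), and shows that all $\E[\min(D/N,X_i)]$ coincide (Proposition~\ref{prop:equality}); the only new ingredient for the correlated case is Lemma~\ref{lemma:exchangeable_2}, which verifies that the additive model $Z_i=\hat{Z}_i+\bar{Z}$ makes the joint density permutation-invariant. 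Your argument isolates exactly the two structural facts that make all of this work---convexity of $\mathbf{C}\mapsto \E[(D-\sum_i Z_iC_i)^+]$ and exchangeability of $(Z_1,\ldots,Z_N)$---and replaces the hand-crafted cyclic-shift inequality with one application of Jensen's inequality over the symmetric group. What your version buys is generality and brevity: it works for any exchangeable joint law of the $Z_i$ (the additive decomposition enters only through exchangeability, exactly as in Lemma~\ref{lemma:exchangeable_2}), it does not require densities to exist, and it averages over all $N!$ permutations rather than just the $N$ cyclic ones. What the paper's version buys is that its key inequality holds realization by realization, which is more concrete. Two small points you should make explicit: permutation-invariance of $F$ also requires that $D$ be independent of (or jointly exchangeable with) the $Z_i$'s, an assumption the paper makes implicitly as well; and, as you correctly note, both arguments only exhibit a symmetric minimizer rather than proving every minimizer is symmetric, which is all the theorem and its later uses require.
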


	In addition, note that Theorem \ref{th:social_nash} does not require  the i.i.d assumption on $Z_i$. Therefore, we infer that the symmetric solution that minimizes social cost is a Nash equilibrium even when the generation is correlated. In what follows, we further show that correlation does not tamper the efficiency of any Nash equilibria in the capacity game.

	\begin{theorem}
		\label{theorem:correlation}
		Suppose that $({C}^{\diamond}_1, {C}^{\diamond}_2, \ldots, {C}^{\diamond}_N)$ denotes any Nash equilibrium solution in an instance with $N$ producers and $N > \frac{1}{\gamma}$, where $\gamma_i = \gamma, \forall i$. Then, as long as {the random variable $Z_i$, is captured in \eqref{eq:decomposition}}, and assumption A3 is satisfied, we have that:
		$$\sum_{i=1}^N \pi_i({C}^{\diamond}_1, {C}^{\diamond}_2, \ldots, {C}^{\diamond}_N) \leq \gamma \sum_{i=1}^N {C}^{\diamond}_i + \alpha N^{-c},$$

		where $\alpha, c > 0$ are constants that are independent of $N$.
	\end{theorem}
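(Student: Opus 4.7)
The plan is to reduce the correlated case to the i.i.d.\ case by conditioning on the common random factor $\bar{Z}$. First, I would fix a realization $\bar{z}$ of $\bar{Z}$ and observe that, conditional on this event, the variables $Z_i = \hat{Z}_i + \bar{z}$ are i.i.d.\ (merely shifted by a constant), so the structure of the payment function from Proposition~\ref{prop:expected_profit} becomes amenable to the same type of analysis used in Theorem~\ref{maintheorem}. Concretely, the aggregate production decomposes as
\begin{equation*}
\sum_{j=1}^N Z_j C_j^{\diamond} \;=\; \bar{Z}\sum_{j=1}^N C_j^{\diamond} \;+\; \sum_{j=1}^N \hat{Z}_j C_j^{\diamond},
\end{equation*}
so conditioning on $\bar{Z}=\bar{z}$ effectively replaces the demand $D$ by the modified quantity $\tilde{D}(\bar{z}) = D - \bar{z}\sum_{j\neq i} C_j^{\diamond}$ and reduces the generation variables to the i.i.d.\ family $\{\hat{Z}_j\}$. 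Assumption A3 guarantees that both $\hat{Z}_j$ and $\bar{Z}$ individually satisfy A1, so all regularity conditions required by the i.i.d.\ proof survive the conditioning.

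Next, I would transplant the concentration step from the proof of Theorem~\ref{maintheorem}. The core ingredient there is a Berry--Esseen-type bound on $\sum_j Z_j C_j^{\diamond}$ that produces the $N^{-c}$ error term. In the correlated setting the first summand $\bar{Z}\sum_j C_j^{\diamond}$ is a \emph{deterministic} quantity once we condition on $\bar{z}$, so only the independent sum $\sum_j \hat{Z}_j C_j^{\diamond}$ needs to concentrate, and the same Berry--Esseen estimate applies using $\sigma_{\hat{Z}}$ and $\rho_{\hat{Z}}$ in place of $\sigma_Z$ and $\rho_Z$. The best-response optimality at the Nash equilibrium $\pi_i(C_i^{\diamond},\bm{C}_{-i}^{\diamond}) - \gamma C_i^{\diamond} \geq \pi_i(C_i',\bm{C}_{-i}^{\diamond}) - \gamma C_i'$ then furnishes (after choosing a suitable deviation $C_i'$) the conditional bound
\begin{equation*}
\sum_{i=1}^N \pi_i(\bm{C}^{\diamond}) \,\big|\, \bar{Z}=\bar{z} \;\leq\; \gamma \sum_{i=1}^N C_i^{\diamond} + \alpha' N^{-c'},
\end{equation*}
with constants $\alpha',c'>0$ depending only on the parameters appearing in A1/A3 and not on $\bar{z}$. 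Taking expectation over $\bar{Z}$ preserves the inequality and yields the claimed unconditional bound. Existence of a Nash equilibrium itself (and in particular the symmetric one) follows directly from Theorem~\ref{th:social_nash} combined with Theorem~\ref{th:symmetric_correlation}, neither of which requires independence.

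The main obstacle I expect is verifying that the Berry--Esseen-type constants remain \emph{uniform in $\bar{z}$}: the rate guarantee needs the conditional variance and third absolute moment of $\hat{Z}_i$ to be bounded away from zero and from above by quantities independent of $\bar{z}$, which is exactly the content of A3. A subtler issue is that the Nash optimality inequality is naturally unconditional, so one must be careful in transferring it to a conditional bound; the cleanest way I see is to select the deviation $C_i'$ as the same minimizer used in the i.i.d.\ proof, apply the tower property $\E\pi_i = \E_{\bar{Z}}\E[\pi_i \mid \bar{Z}]$, and absorb the $\bar{Z}$-dependence of $\tilde{D}(\bar{z})$ into the constant by using assumption B1 to bound $\tilde{D}(\bar{z})$ away from zero on a typical event and handling the atypical event separately with a crude $O(1)$ bound that is dominated by $\alpha N^{-c}$.
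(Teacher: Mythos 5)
Your central idea---decompose $Z_i = \hat{Z}_i + \bar{Z}$ as in \eqref{eq:decomposition}, isolate the common factor, and apply Berry--Esseen to the remaining independent sum with a bound that is uniform in the realization of the common factor---is exactly the mechanism the paper uses, so the approach is fundamentally right. But there is a genuine gap in how you deploy it. The step where you derive a \emph{conditional} bound on $\sum_i \pi_i(\bm{C}^{\diamond})$ given $\bar{Z}=\bar{z}$ from the best-response inequality and then integrate over $\bar{Z}$ does not go through as stated: the Nash first-order conditions hold only in expectation over \emph{all} randomness, including $\bar{Z}$, so $\bm{C}^{\diamond}$ is not an equilibrium of the game conditioned on $\bar{Z}=\bar{z}$, and the i.i.d.\ argument cannot be invoked realization-by-realization. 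You flag this yourself but do not resolve it. The paper sidesteps it entirely: the equilibrium analysis stays unconditional (the first-order condition for producer $1$ yields an exact expression for $\pi_N$ containing the single problematic term ${C}^{\diamond}_1\,\E[\mathbbm{1}\{0 \leq D - \sum_{i\neq N}C^{\diamond}_i Z_i \leq C^{\diamond}_N Z_N\}Z_1]$), and only \emph{inside} that term does it condition on the common component $Z_0 = \bar{Z}\sum_{i\neq N}C^{\diamond}_i/C^{\diamond}_N$, bounding the anti-concentration probability of $Z' = \sum_{i\neq N}(C^{\diamond}_i/C^{\diamond}_N)\hat{Z}_i$ over a window of fixed width uniformly in the conditioning value. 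Relatedly, your worry about keeping $\tilde{D}(\bar{z})$ away from zero and splitting off an atypical event is a red herring (and a ``crude $O(1)$ bound dominated by $\alpha N^{-c}$'' cannot work): the window bound is uniform over the window's location, so no typicality argument is needed.

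A second omission is the case analysis on $\sum_{i=1}^{N-1}C^{\diamond}_i/C^{\diamond}_N$, which is not optional. The Berry--Esseen error for the weighted sum $\sum_{i\neq N}(C^{\diamond}_i/C^{\diamond}_N)\hat{Z}_i$ scales like $(\sum_i \bar{\sigma}_i^2)^{-1/2}$ with $\bar{\sigma}_i^2 = (C^{\diamond}_i/C^{\diamond}_N)^2\hat{\sigma}^2$, and the weights are equilibrium quantities you do not control; if the capacities are highly asymmetric the variance can be $O(1)$ and the Berry--Esseen term gives nothing. The paper splits into $\sum_{i<N}C^{\diamond}_i/C^{\diamond}_N \leq \kappa N^{3/4}$ (handled by the crude bounds ${C}^{\diamond}_1 \leq \E D/(\gamma N)$ from Lemma~\ref{lem_capupperbound} together with the smallness of $\sum_i C^{\diamond}_i/C^{\diamond}_N$) versus $> \kappa N^{3/4}$ (where Cauchy--Schwarz gives $(\sigma')^2 \gtrsim N^{1/2}$ and Berry--Esseen applies). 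Your argument needs this dichotomy, or a substitute for it, to cover all equilibria rather than only the near-symmetric ones.
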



	{Theorem \ref{theorem:correlation} extends the statement in Theorem \ref{maintheorem} from i.i.d. random variables to correlated random variables. This indicates that if the randomness of each producer is captured by an additive model interpreted as the sum of shared randomness and individual-level randomness, then the decentralized market is efficient and that both producers and electricity users benefit from this market.}

	\subsection{Uniqueness of the Symmetric  Equilibrium}\label{sec:uniqueEQ}
	Although our setting could admit many equilibrium solutions, we know that one of these solutions must always be symmetric, i.e., every producer has the same investment level. This solution is of particular interest as it minimizes the social cost.
	We now  show that the symmetric Nash equilibrium $C_1^*, C_2^*, \dots, C_N^*$ is unique in Theorem \ref{theorem:unique}.
	\begin{theorem}\label{theorem:unique}
		Under assumption A1, the symmetric Nash equilibrium in the capacity game \eqref{eq:cap_game} is unique.
	\end{theorem}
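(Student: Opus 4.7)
\textbf{Proof proposal for Theorem~\ref{theorem:unique}.}

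The plan is to reduce the symmetric Nash equilibrium condition to a scalar first-order equation and then show that equation has exactly one positive solution. First, I would compute the one-sided derivatives of the profit function at the symmetric point. When player $i$ deviates upward to $C_i \ge C$, Proposition~\ref{prop:expected_profit} gives
\[
\pi_i(C_i,C,\ldots,C) \;=\; \E\bigl[\min\{(D - C{\textstyle\sum_{j\neq i}}Z_j)^+,\; Z_i C_i\}\bigr],
\]
and differentiating under the integral, which is legitimate because A1 makes the density of $\sum_j Z_j$ bounded and continuous, yields the right derivative
\[
\pi_i'(C^+) \;=\; \E\bigl[Z_i\,\mathbbm{1}\{C{\textstyle\sum_j}Z_j \le D\}\bigr].
\]
A parallel computation using the ``smaller producer'' branch of Proposition~\ref{prop:expected_profit} for $C_i \le C$ gives the left derivative, and the smoothness supplied by A1 (no atoms in $\sum_j Z_j$ or in any of the ordering-dependent boundary events) forces the two one-sided derivatives to coincide. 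Combined with the best-response requirement $\pi_i'(C^+) \le \gamma \le \pi_i'(C^-)$, this turns the symmetric Nash condition into the scalar equation
\[
H(C) \;:=\; \E\bigl[Z_1\,\mathbbm{1}\{C{\textstyle\sum_j}Z_j \le D\}\bigr] \;=\; \gamma.
\]

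Second, I would match $H(C)=\gamma$ with the first-order condition of the social planner's symmetric problem~\eqref{eq:social_opt}. Writing $\bar{Z}=\tfrac{1}{N}\sum_j Z_j$, the planner minimizes $N\gamma C + \E(D - CN\bar{Z})^+$, whose stationarity condition reads $\gamma = \E[\bar{Z}\,\mathbbm{1}\{CN\bar{Z}\le D\}]$. By exchangeability of the $Z_j$'s, $\E[\bar{Z}\,\mathbbm{1}\{\cdots\}] = \E[Z_1\,\mathbbm{1}\{\cdots\}] = H(C)$, so the two equations are identical. Consequently, every symmetric Nash equilibrium solves the planner's stationarity equation, and Theorem~\ref{th:social_nash} already supplies $C^*$ as one such solution; uniqueness of the symmetric equilibrium then reduces to uniqueness of the root of $H(C)=\gamma$.

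Third, I would show that $H$ is continuous and strictly decreasing wherever $H(C)\in(0,\E Z_1)$. Continuity follows by dominated convergence. For any $C_1<C_2$,
\[
H(C_1)-H(C_2) \;=\; \E\bigl[Z_1\,\mathbbm{1}\{C_1{\textstyle\sum_j}Z_j \le D < C_2{\textstyle\sum_j}Z_j\}\bigr] \;\ge\; 0.
\]
Assumption A1 ensures that $\sum_j Z_j$ has a continuous density on an interval inside $[0,N]$, and B1 keeps $D$ in the nondegenerate range $[D_{\min},D_{\max}]$, so whenever both ratios $D/C_1$ and $D/C_2$ lie in the interior of the support of $\sum_j Z_j$ the event in the indicator has strictly positive probability and strictly positive $Z_1$-expectation. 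This range is exactly where $H(C)=\gamma$ can be satisfied, so $H$ is strictly decreasing there, and the root is unique.

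The main obstacle is step one, specifically justifying that the left and right derivatives of $\pi_i(\cdot,C,\ldots,C)$ agree at $C_i=C$. The expression for $\pi_i$ switches between the ``largest-player'' and ``smaller-player'' branches of Proposition~\ref{prop:expected_profit} as $C_i$ crosses $C$, and each branch involves ordering-dependent conditional expectations over the capacities of the other players. Showing that the resulting one-sided derivatives coincide requires a careful analysis of boundary events of the form $\{\sum_j Z_j C \approx D\}$ and $\{Z_i C\approx (D-\sum_{j\neq i}Z_j C)^+\}$, relying on A1 to guarantee that these carry no atomic mass. An alternative route is to bypass the explicit two-sided computation by invoking concavity of $\pi_i$ in $C_i$ and replacing the derivative equation by a subdifferential inclusion, but either way the probabilistic smoothness provided by A1 is the crux of the argument.
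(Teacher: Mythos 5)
Your proposal follows essentially the same route as the paper's own proof: reduce the symmetric equilibrium condition to the scalar first-order equation $\gamma = \E\bigl[Z_1\,\mathbbm{1}\{C\sum_j Z_j \le D\}\bigr]$ and argue that the right-hand side is monotone decreasing in $C$, so it meets $\gamma$ at most once. You are in fact more careful than the paper on two points it glosses over---the agreement of the one-sided derivatives at the symmetric point (the paper simply invokes the first-order condition from its Lemma on symmetric equilibria) and the strictness, rather than mere weak monotonicity, of $H$ on the range where the equation can hold---but the underlying argument is identical.
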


	{Theorem \ref{theorem:unique} states that there is only one symmetric Nash equilibrium in the capacity game. This indicates that if the decentralized market is regulated such that each producer behaves similarly in the presence of uncertainty, then it is guaranteed that the competition is both efficient and socially optimal in the investment decision.}

	\subsection{Results on Asymmetric investment price}\label{sec:asym_gamma}

We now extend our results to a more general case when PV providers' investment price is different. This would be the case when installation is dispersed cross areas where unit installation cost is not the same. We consider an arbitrary equilibrium solution of the two-level game with asymmetric investment costs, $(C^{\diamond}_1, C^{\diamond}_2, \ldots, C^{\diamond}_N)$, such that $C^{\diamond}_1 \leq C^{\diamond}_2 \leq \ldots \leq C^{\diamond}_N$. This assumption is clearly without loss of generality. Moreover, we assume that $\gamma_i$ represents the investment cost of the PV whose capacity at equilibrium is given by $C^{\diamond}_i$. Let $\gamma_{min} = \min_{i} \gamma_i$.  {We start with Theorem \ref{theorem_exist}, which guarantees that the game always admits a Nash equilibrium.}

{
	\begin{theorem}\label{theorem_exist}
		Given an instance of the PV game in \eqref{eq:cap_game}, but with asymmetric investment costs $\gamma_1 \geq \gamma_2 \geq \cdots \geq \gamma_N$, there always exists a Nash equilibrium for the capacity game as long as the distributions $(Z_i)_{i=1}^N$ are identical.
	\end{theorem}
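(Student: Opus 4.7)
The plan is to apply the Debreu-Glicksberg-Fan theorem, whose three hypotheses are a compact convex strategy space, continuous payoffs, and quasi-concavity of each player's payoff in her own strategy. The asymmetry of the $\gamma_i$ does not affect the first two ingredients; only quasi-concavity genuinely depends on the structure of $\pi_i$ from Proposition~\ref{prop:expected_profit}, and this will be the main obstacle.

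First, I bound the strategy space. Since prices are capped at $1$ by Assumption A2 and $D \leq D_{max}$ by Assumption B1, we have $\pi_i(C_i, C_{-i}) \leq D_{max}$, so any $C_i > D_{max}/\gamma_i$ is strictly dominated by $C_i = 0$. Hence it suffices to restrict attention to $[0, \bar{C}]^N$ with $\bar{C} = D_{max}/\gamma_{\min}$, which is compact and convex. Continuity of $U_i(C_i, C_{-i}) = \pi_i - \gamma_i C_i$ then follows from two observations: within any open region where the ordering of capacities is fixed, Proposition~\ref{prop:expected_profit} expresses $\pi_i$ as an expectation of a bounded integrand, which is continuous by dominated convergence; and at capacity ties (say $C_i = C_j$), the piecewise expressions agree because the $Z_i$'s are identically distributed and the formula is invariant under relabeling $i \leftrightarrow j$.

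Next, and this is the hard step, I establish quasi-concavity of $U_i$ in $C_i$ at fixed $C_{-i}$. Let $M = \max_{j \neq i} C_j$ and let $k$ realize the max. In the upper regime $C_i \geq M$, the formula $\pi_i = \E[\min\{(D - \sum_{j \neq i} Z_j C_j)^+, Z_i C_i\}]$ has integrand that is, pointwise, the minimum of a constant and a linear function of $C_i$, hence concave; expectation preserves this, and subtracting $\gamma_i C_i$ leaves $U_i$ concave on $[M, \bar{C}]$. In the lower regime $C_i < M$, I write $\pi_i = \lambda(C_i, C_{-i}) \cdot g(C_i)$ where $g(C) = \E[\min(ZC, D)]$ is concave and increasing, and $\lambda = \pi_k/g(C_k)$ depends on $C_i$ through the $Z_i C_i$ term inside $\pi_k$. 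Since this is a product of an increasing concave function of $C_i$ and a decreasing function of $C_i$, concavity is not automatic. My plan is to compute $\partial U_i/\partial C_i$ using integral representations of the two factors, exchanging derivative and expectation via A1's bounded continuous densities, and to show that this derivative is strictly decreasing in $C_i$ by leveraging the smoothing effect of the densities on the kinks of the integrand. This yields quasi-concavity on the lower regime; combined with upper-regime concavity and the boundary agreement at $C_i = M$, each best-response set is an interval, hence convex.

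With these three ingredients in place, Debreu-Glicksberg-Fan produces a fixed point of the joint best-response correspondence, which is the desired pure-strategy Nash equilibrium. If the derivative-monotonicity step in the lower regime turns out to fail at corner cases, the fallback is Reny's better-reply-security theorem, whose hypotheses should follow from the continuity and boundedness established above, still yielding existence of a pure-strategy Nash equilibrium.
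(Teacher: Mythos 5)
Your overall strategy (Debreu-type existence via compactness, continuity, and own-strategy quasi-concavity) is the same as the paper's, and your compactification of the strategy space and the upper-regime concavity argument are fine. But the proof has a genuine gap exactly where you flag ``the hard step'': for $C_i < \max_{j\neq i} C_j$ you never actually establish that $\frac{d}{dC_i}\bigl(\pi_i(C_i,\bm{C}_{-i}) - \gamma_i C_i\bigr)$ is non-increasing --- you only announce a plan to show it ``by leveraging the smoothing effect of the densities on the kinks.'' This is the crux of the whole theorem, and the appeal to smoothing is not a substitute for an argument. The paper closes this by writing the derivative explicitly as
$\frac{1}{C_N G_N}\bigl(\pi_N(\bm{C})\,\tfrac{d}{dC_i}\E[\min(C_iZ_i,D)] - \E[\min(C_iZ_i,D)]\,\E[\mathbbm{1}\{0\le D-\sum_{j\neq N}C_jZ_j\le C_NZ_N\}Z_i]\bigr)$
and observing that the first product is a product of two non-negative non-increasing functions of $C_i$ while the subtracted term is a product of two non-negative non-decreasing functions, so the whole expression is non-increasing. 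Nothing about density smoothness is needed; the monotonicity is structural. Your fallback to Reny's better-reply security does not rescue you either: Reny's pure-strategy existence theorem still requires own-strategy quasi-concavity, which is precisely the hypothesis in doubt.

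There is a second, subtler problem with your gluing step. A function that is quasi-concave on $[0,M)$ and concave on $[M,\bar C]$ with matching values at $M$ need not be quasi-concave on $[0,\bar C]$: it can decrease on part of the lower regime and then increase again in the upper regime, so best-response sets need not be intervals. To make the pasting work you would have to compare the one-sided derivatives at $C_i = M$, which your proposal does not do. The paper avoids the regime switch altogether: it first proves existence for a \emph{restricted} game in which the cheapest player $N$ is constrained to hold the largest capacity (so each player's payoff has a single functional form over its entire strategy set), and then shows separately that no player wants to cross the ordering boundary at that equilibrium, using $\gamma_i \ge \gamma_N$ together with the identical distributions of the $Z_i$'s. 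That last deviation argument is also where the hypothesis $\gamma_1 \ge \cdots \ge \gamma_N$ genuinely enters; in your proposal the ordering of the $\gamma_i$'s plays no role, which is a further sign that the regime-boundary issue has not been confronted.
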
}

	{Detailed proof is left in Appendix \ref{proof_theorem_asymm_exist}. With this guarantee,} we now introduce the result on the efficiency of the game with asymmetric $\gamma_i$'s.

	\begin{theorem}
		\label{theorem:correlation_asymmetric}
		Suppose that $({C}^{\diamond}_1, {C}^{\diamond}_2, \ldots, {C}^{\diamond}_N)$ denotes any Nash equilibrium solution in an instance with $N$ producers, asymmetric investment costs $(\gamma_1, \ldots, \gamma_N)$ and {$N > \frac{1}{\frac{D_{min}}{D_{max}}\gamma_{\min}}$}. Then, as long as {the random variable $Z_i$, is captured in \eqref{eq:decomposition}}, and assumption A3 is satisfied, we have that:
		$$\sum_{i=1}^N \pi_i({C}^{\diamond}_1, {C}^{\diamond}_2, \ldots, {C}^{\diamond}_N) \leq \sum_{i=1}^N \gamma_{\max} {C}^{\diamond}_i + \alpha N^{-c},$$

		where $\alpha, c > 0$ are constants that are independent of $N$ and $\gamma_{\max} = \max_{i} \gamma_i$.
	\end{theorem}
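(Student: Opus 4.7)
The plan is to mirror the proof strategy of Theorem~\ref{theorem:correlation} while carrying along the individual costs $\gamma_i$ and exploiting the fact that under the additive model~\eqref{eq:decomposition} with A3, the $Z_i$'s remain identically distributed (say with common mean $\mu$). First, reordering so that $C^{\diamond}_1 \leq \cdots \leq C^{\diamond}_N$, Proposition~\ref{prop:expected_profit} gives
$$\sum_{i=1}^N \pi_i = \frac{\pi_N}{\E[\min(Z_N C^{\diamond}_N, D)]}\sum_{i=1}^N \E[\min(Z_i C^{\diamond}_i, D)],$$
and since $\E[\min(Z_i C^{\diamond}_i, D)] \leq \mu C^{\diamond}_i$, the sum on the right is at most $\mu\sum_i C^{\diamond}_i$. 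It therefore suffices to show $\pi_N / \E[\min(Z_N C^{\diamond}_N, D)] \leq \gamma_N/\mu + O(N^{-c})$ and then invoke $\gamma_N \leq \gamma_{\max}$ to absorb the per-player dependence.

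Second, I would bound $\pi_N$ using player $N$'s Nash best-response condition. The revenue $\pi_N(\cdot, C^{\diamond}_{-N})$ in~\eqref{eq:eq_pi} is concave in $C_N$ because $\partial\pi_N/\partial C_N(c) = \E[Z_N\mathbbm{1}\{Z_N c < (D - S_{-N})^+\}]$ is nonincreasing in $c$, where $S_{-N} = \sum_{j \neq N} Z_j C^{\diamond}_j$. At the interior optimum the first-order condition reads $\pi_N'(C^{\diamond}_N) = \gamma_N$, so
$$\pi_N = \gamma_N C^{\diamond}_N + \int_0^{C^{\diamond}_N}\!\bigl(\pi_N'(c) - \gamma_N\bigr)\,dc,$$
with integrand equal to $\E[Z_N\mathbbm{1}\{c Z_N < D - S_{-N} \leq C^{\diamond}_N Z_N\}]$. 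Conditioning on $Z_N$ (independent of $D$ and $S_{-N}$) and using the maximum density $\kappa$ of $D - S_{-N}$ yields $\pi_N \leq \gamma_N C^{\diamond}_N + \tfrac{1}{2}\kappa\mu (C^{\diamond}_N)^2$. The density bound $\kappa$ is obtained exactly as in Theorem~\ref{theorem:correlation}: conditional on $\bar{Z}$, the sum $S_{-N}$ is a convolution of independent pieces $\hat{Z}_j C^{\diamond}_j$, so a Berry--Esseen-type estimate controls its density uniformly on the support of $\bar{Z}$; integrating against the bounded density of $\bar{Z}$ from A3 preserves the rate. The threshold $N > D_{\max}/(D_{\min}\gamma_{\min})$, together with the Nash lower bounds $\pi_i \geq \gamma_i C^{\diamond}_i$ and the trivial ceiling $\sum_i \pi_i \leq D_{\max}$ (which gives $\sum_i C^{\diamond}_i \leq D_{\max}/\gamma_{\min}$), places both $\kappa$ and $C^{\diamond}_N$ in a regime where $\kappa(C^{\diamond}_N)^2 = O(N^{-c})$; combining with the reduction in the first step and $\gamma_N \leq \gamma_{\max}$ delivers the desired inequality.

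The main obstacle will be the density/concentration estimate: the shared randomness $\bar{Z}$ in the additive model is not averaged out across producers, so standard CLT-type arguments must be applied conditionally on $\bar{Z}$ and then integrated, with care to ensure the bound holds uniformly over its support. A secondary difficulty, absent in the symmetric-cost analysis of Theorem~\ref{theorem:correlation}, is that equilibrium capacities may be highly unbalanced across producers with heterogeneous $\gamma_i$'s, so showing that $C^{\diamond}_N$ cannot monopolize the market requires combining the Nash best-response conditions of several players simultaneously---in particular, linking the entry incentive of the player with the smallest $\gamma_i$ to the first-order condition of the player with the largest $C^{\diamond}_i$. The hypothesis $N > D_{\max}/(D_{\min}\gamma_{\min})$ is precisely calibrated to rule out degenerate one-firm-dominates equilibria by guaranteeing enough competitive pressure from at least one low-cost producer, which is what makes the concentration step quantitative.
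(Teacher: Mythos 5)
Your high-level ingredients (first-order conditions, a concentration estimate applied conditionally on $\bar{Z}$, and the capacity bound coming from the hypothesis on $N$) are the right ones, but the specific reduction has a gap that does not close. You anchor the argument on player $N$'s best-response condition, obtaining $\pi_N \leq \gamma_N C^{\diamond}_N + \tfrac{1}{2}\kappa\mu (C^{\diamond}_N)^2$, and then claim this yields $\pi_N/\E[\min(Z_N C^{\diamond}_N, D)] \leq \gamma_N/\mu + O(N^{-c})$. It does not: writing $G_N = \E[\min(Z_N, D/C^{\diamond}_N)]$, your bound gives $\pi_N/(G_N C^{\diamond}_N) \leq \gamma_N/G_N + \kappa\mu C^{\diamond}_N/(2G_N)$, and $\gamma_N/G_N$ exceeds $\gamma_N/\mu$ by a \emph{constant multiplicative factor} whenever $C^{\diamond}_N > D_{\min}$ with positive probability --- which is the typical situation, since Lemma~\ref{lem_capupperbound} only caps $C^{\diamond}_N$ at $\E D/\gamma$, so $G_N$ stays bounded away from $\mu = \E[Z_N]$. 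Propagating this through your first step, the ratios $G_i/G_N \geq 1$ inflate the leading term to $\mathrm{const}\cdot\gamma_{\max}\sum_i C^{\diamond}_i$ with $\mathrm{const}>1$, i.e.\ a multiplicative price-of-anarchy bound rather than the additive $O(N^{-c})$ bound claimed. The paper avoids this by anchoring on player $1$, the \emph{smallest}-capacity producer: it first proves (Lemma~\ref{lem_capupperbound_asy}, via comparison with a fictitious symmetric instance at cost $\gamma_{\min}$) that $C^{\diamond}_1 \leq \E D/(\gamma_{\min}N) < D_{\min}$, which forces $G_1 = \E[Z_1] = \mu$ exactly; then player $1$'s FOC gives $\pi_N = \gamma_1 C^{\diamond}_N G_N/G_1 + C^{\diamond}_1 B$, and the ratios $G_i/G_1 \leq 1$ now go the right way, producing the leading term $\gamma_1\sum_i C^{\diamond}_i \leq \gamma_{\max}\sum_i C^{\diamond}_i$ cleanly.

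A second, related gap is your assertion that the hypothesis $N > D_{\max}/(D_{\min}\gamma_{\min})$ ``places $\kappa$ and $C^{\diamond}_N$ in a regime where $\kappa(C^{\diamond}_N)^2 = O(N^{-c})$.'' This is false without a case analysis: if $C^{\diamond}_N = \Theta(1)$ while the remaining capacities are $O(1/N)$, the conditional variance of $S_{-N}$ is $O(1/N)$, any density bound on $D - S_{-N}$ blows up, and your error term degenerates to $O(C^{\diamond}_N)$. The paper handles exactly this regime by splitting on whether $\sum_{i<N} C^{\diamond}_i/C^{\diamond}_N$ exceeds $\kappa N^{3/4}$: in the unbalanced case it abandons concentration entirely and instead uses the trivial bound $B \leq 1$ together with $C^{\diamond}_1 \leq \E D/(\gamma_{\min}N)$ (which is why the error term must be $C^{\diamond}_1 B$ and not a term proportional to $C^{\diamond}_N$); Berry--Esseen is invoked only in the balanced case, where the Cauchy--Schwarz bound $\sum_{i<N}(C^{\diamond}_i/C^{\diamond}_N)^2 \geq 2\kappa^2 N^{1/2}$ guarantees enough variance. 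You correctly identify the unbalanced-capacity issue as a difficulty in your closing paragraph, but the proposal as written does not supply the mechanism that resolves it.
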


	Since the actual investment cost incurred by the providers is $\sum_{i=1}^N \gamma_i C^{\diamond}_i$, the above theorem implies that the ratio of the total payment to the investment costs (e.g., the price of anarchy) is at most $\frac{\gamma_{\max}}{\gamma_{\min}}$. This allows us to quantify the efficiency at equilibrium. The efficiency is thus quantified by how much the largest invest price $\gamma_{max}$ deviates from each of the $\gamma_i$'s. When the investment prices are not very different across producers which should be the realistic case, the efficiency is not very far away from the case when $\gamma_i$'s are the same.

\section{Simulation}\label{sec:sim}
In this section, we validate the statements by providing simulation results based on both synthetic data and real PV generation data. We use the symmetric Nash equilibrium as the solution of interest in our simulations.
\subsection{Two-player game}
Let us assume that the generation distribution is uniform, i.e., $Z_i \sim \text{unif}(0, 1)$. Suppose that the investment price is the same for all players, i.e., $\gamma = 0.25$, then following the analysis in Section \ref{sec:mainresults}, we know that the optimal capacity satisfies $C^*_1 = C^*_2$. Assuming that the demand is uniformly distributed between 0.75 and 1.25, we solve the social optimization in \eqref{eq:social_opt} with equal investment price $\gamma$. The optimal solution leads to a total capacity of $C^*_{tot} = C^*_1 + C^*_2 = 1.71$, where $C^*_1 = C^*_2 = 0.86$. The result is shown in Fig. \ref{fig:cost}.

\begin{figure}[!ht]
	\centering
	\includegraphics[width = 0.7\columnwidth]{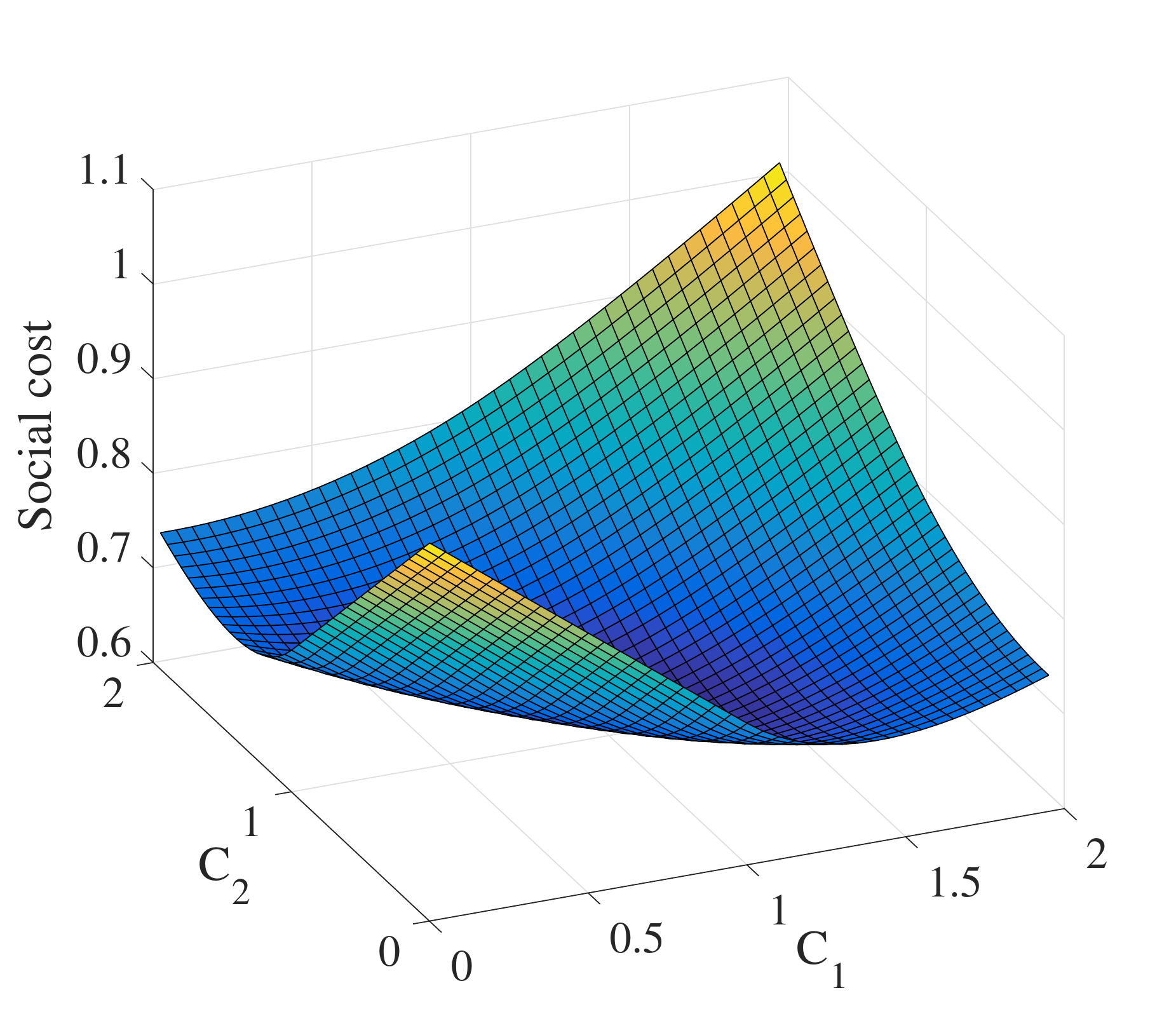}
	\caption{Social cost with respect to total capacity when investment price is the same.}
	\label{fig:cost}
\end{figure}

To verify that $C^*_1 = C^*_2 = 0.86$ is indeed a symmetric Nash equilibrium, we vary the capacity from $C^*_1$ and study how player $1$'s profit changes. The analysis for player 2 proceeds in the same way because of symmetry. We show the result of optimality for player 1 in Fig. \ref{fig:profit} in terms of profit, with a fixed capacity for player 2 where $C_2 = C_2^* = 0.86$.

\begin{figure}[!ht]
	\centering
	\includegraphics[width = 0.6\columnwidth]{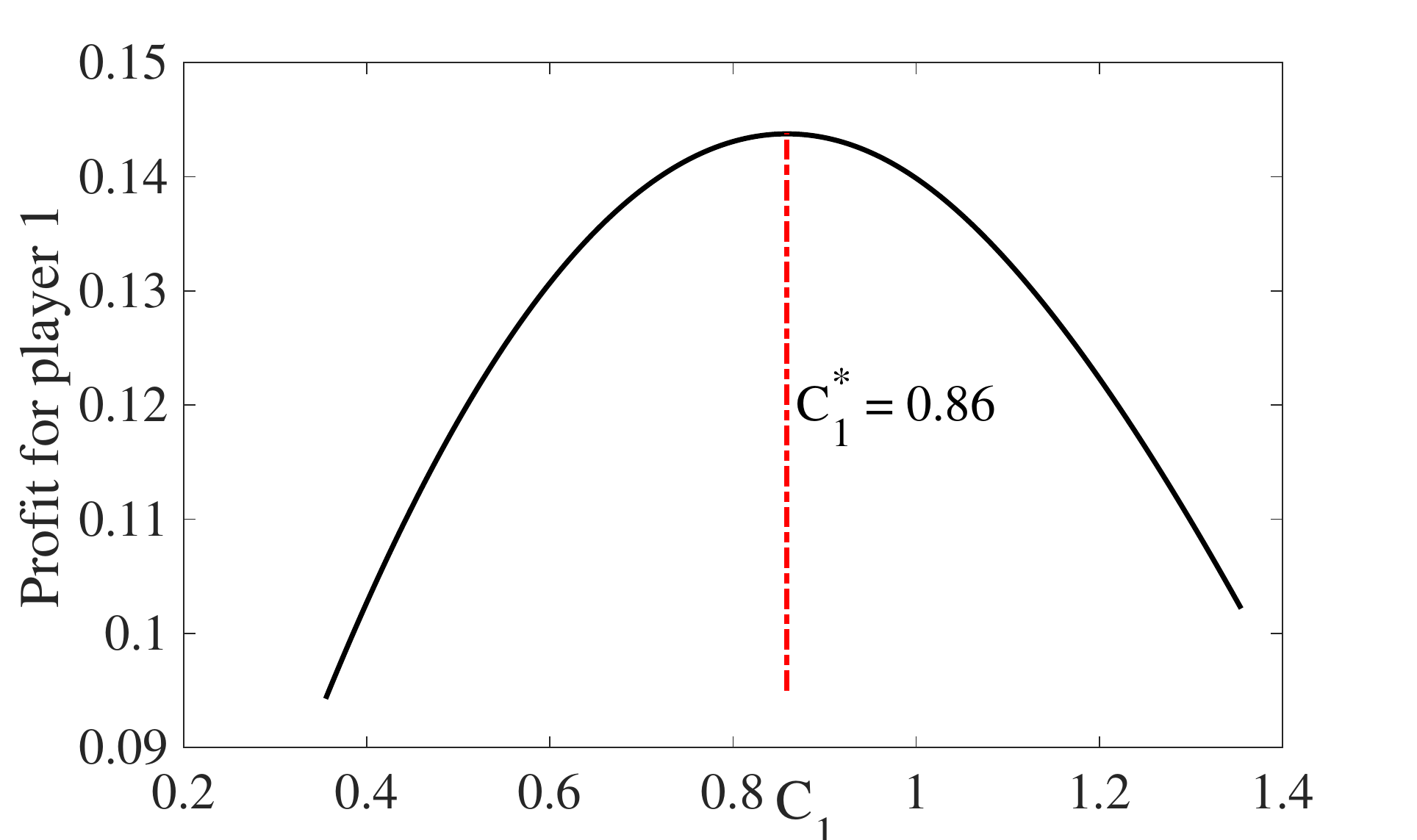}
	\caption{Profit for player 1 when its capacity deviates from $C^*_1$.}
	\label{fig:profit}
\end{figure}

As can be seen from Fig. \ref{fig:profit}, the profit for player 1---when the other player's capacity is fixed at $C^*_2$---peaks at $C_1 = C_1^*$. By symmetry, we can argue that player $2$'s profit is maximized at $C^*_2$ when player $1$'s capacity remains fixed. Therefore, $(C^*_1, C^*_2)$ is indeed a Nash equilibrium as neither player has any incentive to deviate from its investment strategy. In other words, the socially optimal capacity is also a Nash equilibrium for the game shown in \eqref{eq:cap_game}. 

\subsection{N-player game}
To illustrate that the Nash equilibrium is efficient with respect to the metric defined in \eqref{eq:eff}, we need to show that the payment collected from users in the game exactly covers the investment costs of the producers when the number of producers increases. Since the computational complexity grows exponentially with the number of producers (if they are not identical), the simulation is unachievable for large number of asymmetric producers. For illustration purposes we simulate the capacity game with identical players ($\gamma_i = 0.25, \forall i$) with i.i.d. generation (uniform distribution). We then compute the efficiency $\xi$ when there are $10,50,100,150,200,250,300$ players in the game. The results are shown in Fig. \ref{fig:efficiency}.

\begin{figure}[!ht]
	\centering
	\includegraphics[width = 0.6\columnwidth]{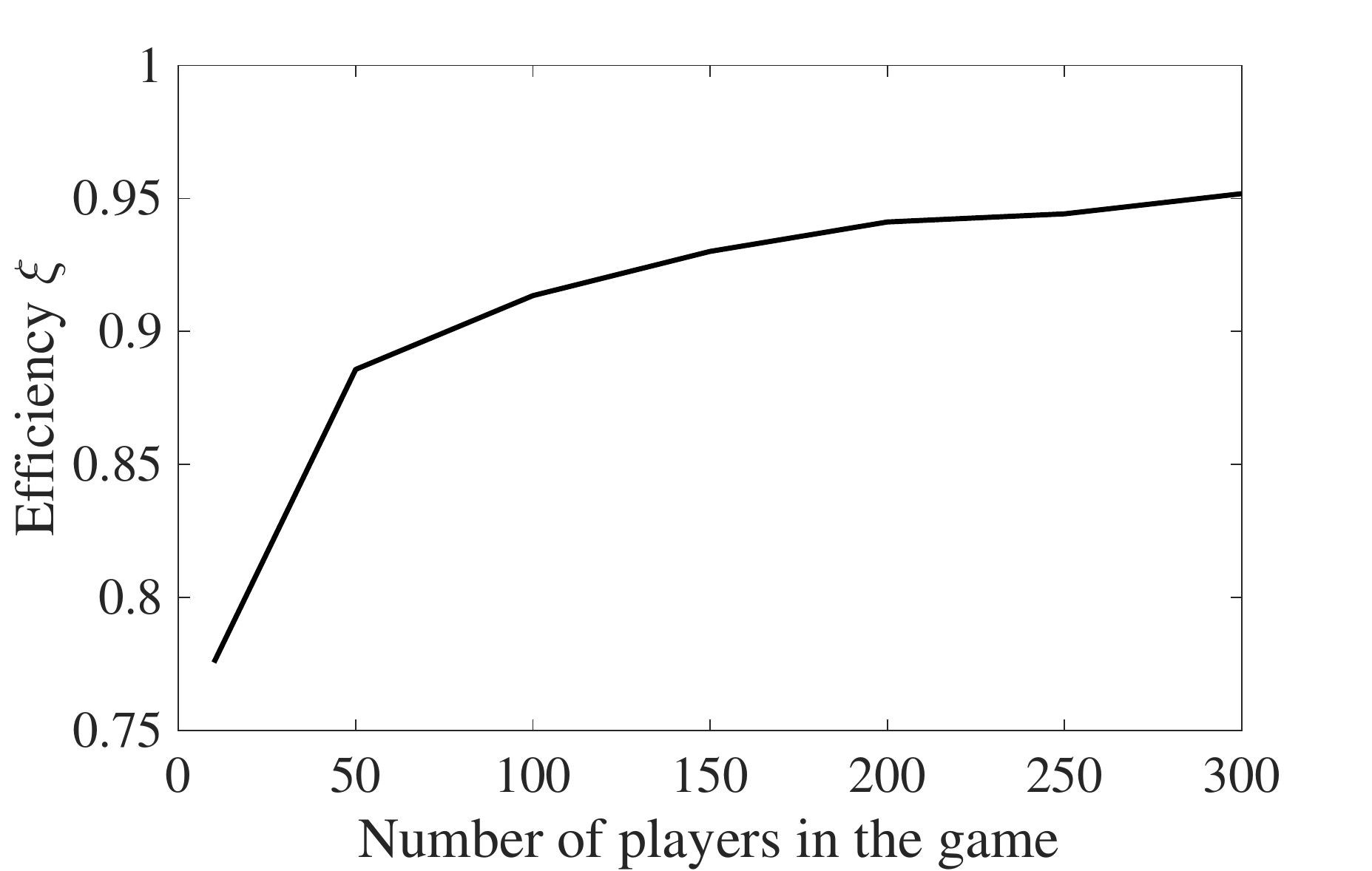}
	\caption{Efficiency of the symmetric Nash equilibrium in the game as a function of number of players.}
	\label{fig:efficiency}
\end{figure}

In Fig. \ref{fig:efficiency}, we see that the efficiency is growing with the number of players in the game. We therefore infer that the competition is healthy as the producers only bid their true costs and do not exploit the consumers of electricity.






\subsection{Case study using real data}
In this section, we simulate the efficiency of the game equilibrium using a real PV generation profile obtained from the National renewable energy laboratory \cite{nrel}. Our data comes from distributed PVs located in California with a 5 minute resolution. Typical PV profiles after normalization are shown in Fig. \ref{fig:PVgen}. {From Fig. \ref{fig:PVgen}, we see that the randomness of PV generation from different locations is strongly correlated. The correlation between those PV profiles is also symmetric across different PV plants, as shown in Fig. \ref{fig:correlation}.
	\begin{figure}[!ht]
		\centering
		\includegraphics[width = 0.9\columnwidth]{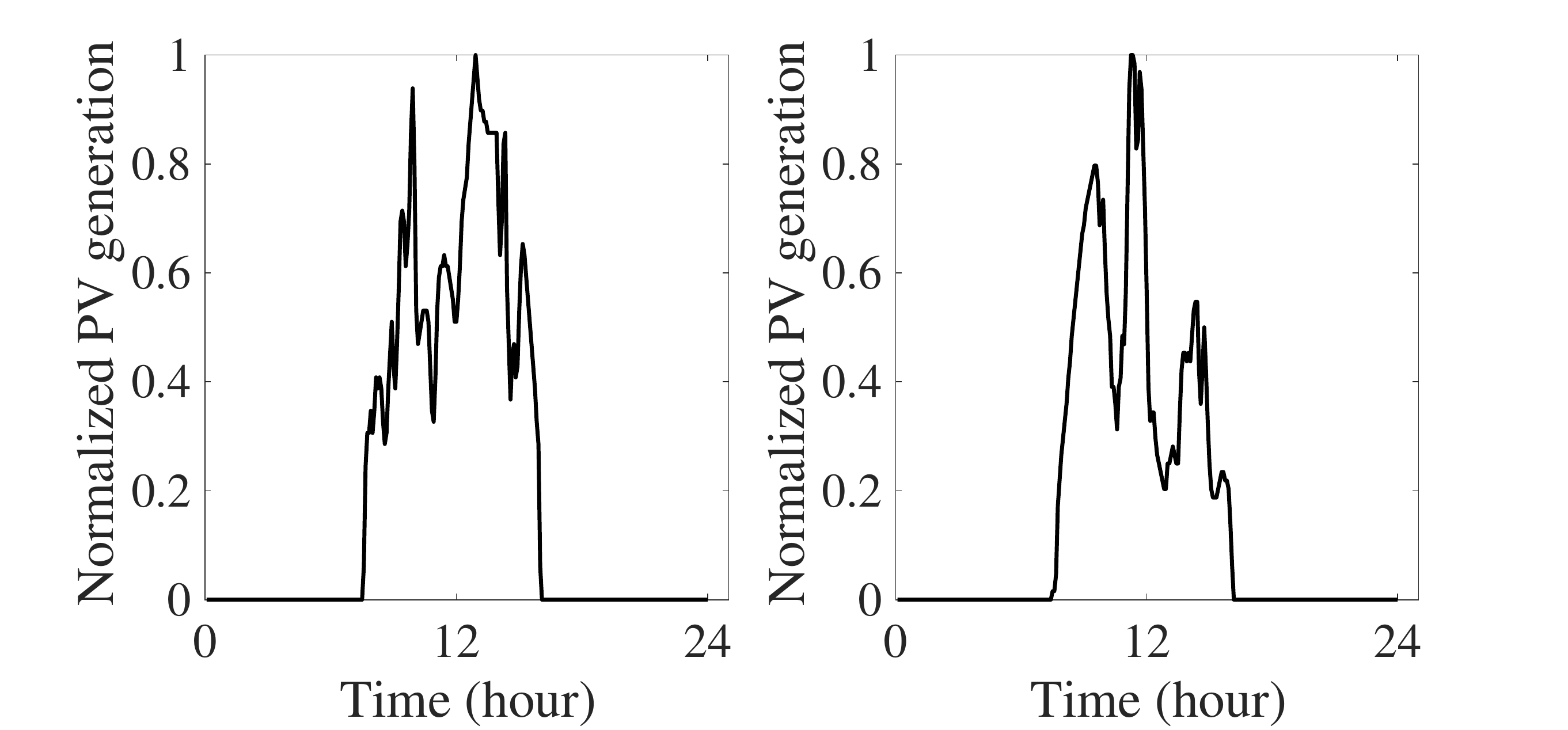}
		\caption{PV generation profile in different locations.}
		\label{fig:PVgen}
	\end{figure}

	\begin{figure}[!ht]
		\centering
		\includegraphics[width = 0.5\columnwidth]{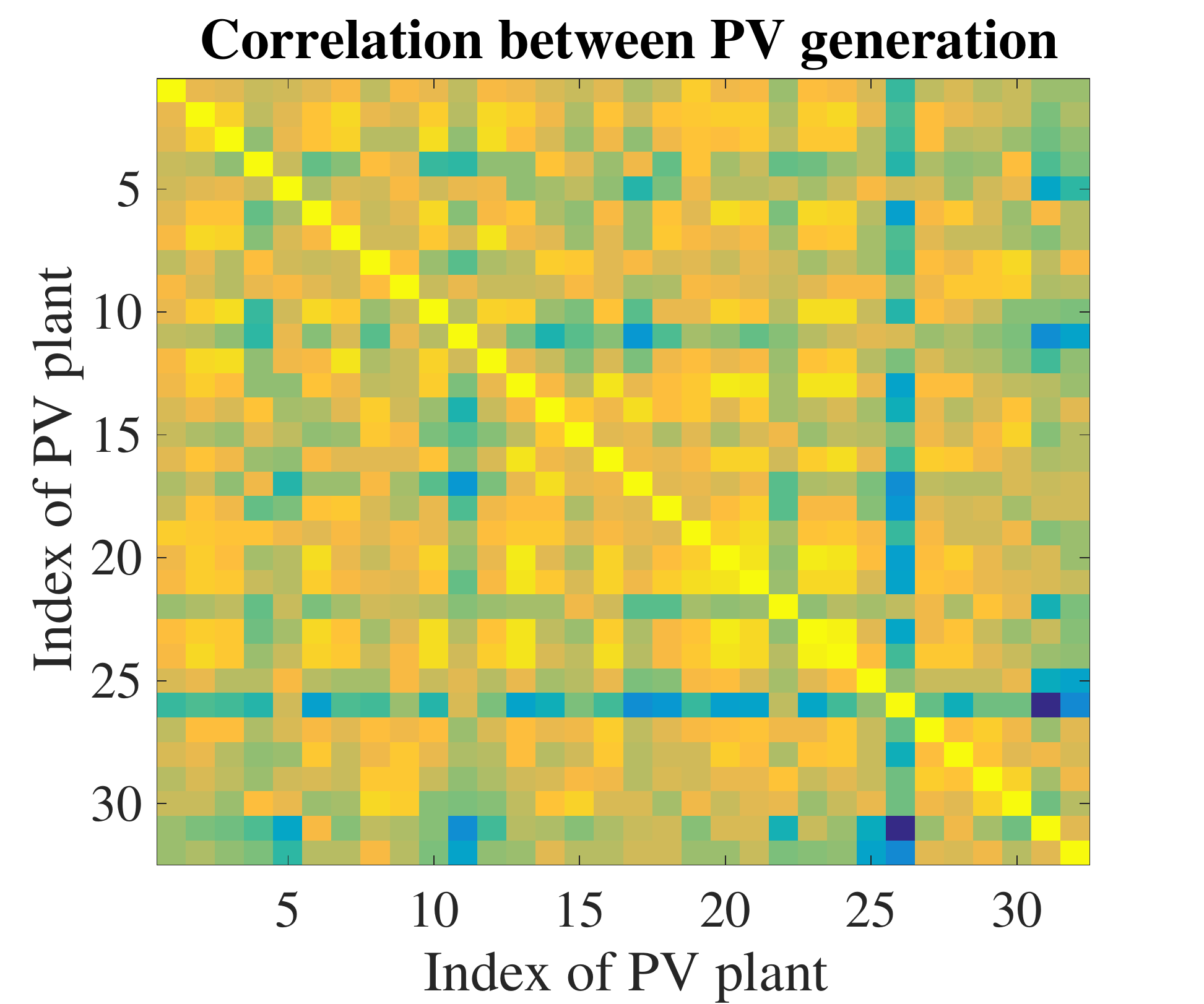}
		\caption{Correlation of PV generation in different locations. A lighter color (yellow) represents stronger correlation and dark colors (blue) represent weak correlation.}
		\label{fig:correlation}
	\end{figure}

	We then use these PV profiles to obtain the game equilibrium as we vary the number of PV participants.  The result is shown in Table \ref{table:eff}, with the assumption that {$\gamma = 0.15$}. As we can see from Table \ref{table:eff}, in the absence of randomness when the producers are assumed to generate energy deterministically, the efficiency is the investment price as described in Equation~\eqref{eq:effdet}. The efficiency of the game with uncertainty improves as the numbers of producers in the market increases.

	\begin{table}[!ht]
		\renewcommand{\arraystretch}{1.3}
		\caption{Game efficiency with different number of producers, when investment price is $0.15$ and demand $D = 5$.}
		\label{table:eff}
		\centering
		\begin{tabular}{|c|c|c|c|}
			\hline
			\bfseries  Number of producers &\bfseries 5 &\bfseries  30 & \bfseries 120 \\
			\hline
			Efficiency of deterministic producers  & 0.15 & 0.15 & 0.15\\
			\hline
			Efficiency of random producers & 0.83 & 0.96 & 0.98\\
			\hline
		\end{tabular}
	\end{table}

	\begin{table}[!ht]
		\renewcommand{\arraystretch}{1.3}
		\caption{The ratio between total capacity and market demand , i.e., ${\sum_i C^*_i}/{D}$, when investment price is $0.15$ and demand $D = 5$.}
		\label{table:total_cap}
		\centering
		\begin{tabular}{|c|c|c|c|}
			\hline
			\bfseries  Number of producers &\bfseries 5 &\bfseries  30 & \bfseries 120 \\
			\hline
			${\sum_i C^*_i}/{D}$ with deterministic producers & 1 & 1 & 1\\
			\hline
			${\sum_i C^*_i}/{D}$ with random producers & 1.26 & 1.32 & 1.30\\
			\hline
		\end{tabular}
	\end{table}

	In addition, in a deterministic game, the total capacity is always the same as the market demand because there is no randomness in generation. In the capacity game with uncertainty, since each producer faces randomness in its own production as well as the random generation from the other producers, the total invested capacity is greater than demand as illustrated in Table. \ref{table:total_cap}. This means that in the capacity game with uncertainty, the total capacity exceeds demand elicits competition among producers. 

\section{Conclusions and Future Work}\label{sec:conclusion}
In this paper, we consider a scenario where many distributed energy resources compete to invest and sell energy in a decentralized electricity market especially when uncertainty is present. Each energy producer optimizes its profit by selling energy. We show that such a competitive game has a Nash equilibrium that coincides with the solution from a social welfare optimization problem. In addition, we show that all Nash equilibria are efficient, in the sense that the collected payment to the energy producers approaches their investment costs. Our statement is validated both by theoretical proofs and simulation studies. Finally, we show that in systems where the investment costs are asymmetric, the inefficiency is at most the ratio of the maximum and minimum investment costs. Of course, in many situations, one can expect different producers have access to similar technologies and therefore, approximately similar investment costs. This can lead to small and bounded levels of inefficiency in practice.

Our work raises a number of intriguing possibilities for future research in this area. First, it is worth noting that the setting studied in this paper admits a multiplicity of equilibrium solutions; we side-step this issue by proving that all of the equilibria are efficient from the perspective of the consumers. In future work, it may be beneficial to study these equilibria in greater detail, particularly the solutions that are more likely to be formed in practice and understand how they compare to the optimal capacity investment strategy. Secondly, this work focuses on the properties of a very specific auction mechanism where each producer sells energy at the exact bid price (i.e., pay-as-bid auction). Understanding how other commonly proposed auction formats---e.g., Cournot mechanism~\cite{ZhangEtAl2015,kian2005bidding}, clock auctions~\cite{maurer2011electricity} ---behave under uncertainty is interesting from a design perspective.

Finally, the central theme in this work---that of leveraging uncertainty to improve outcomes---is applicable to a wide range of systems with some uncertainty in demand or supply. This could include markets in domains such as transportation, communication, cloud computing, etc. Characterizing broad conditions under which uncertainty helps or harms systems with self-interested agents is indeed an important avenue for future research.

\bibliographystyle{IEEEtran}
\bibliography{confbib}

\begin{IEEEbiography}
		[{\includegraphics[width=1in,height=1.25in,clip,keepaspectratio]{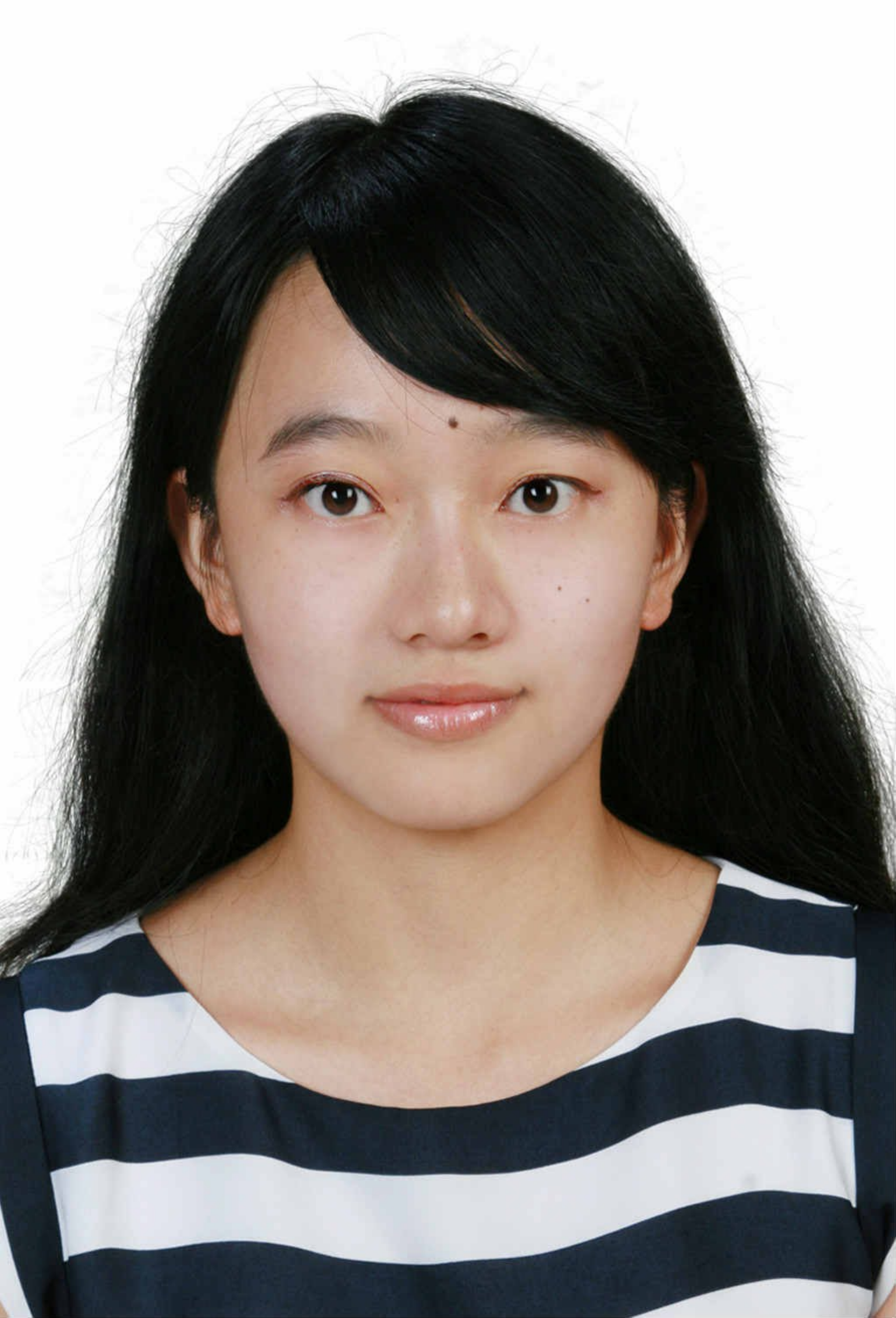}}]{Pan Li} received the B.S. degree in Electrical Engineering, M.S. degree in Systems Engineering
		from Xi'an Jiaotong University, Xi'an, China, in
		2011 and 2014, respectively; and she received her Ph.D. degree in Electrical Engineering from University of Washington, Seattle in 2018. She also holds the engineering diploma from Ecole Centrale de Lille, France. Her research interests included machine learning applications in demand response programs and in distribution networks. She is now research scientist at Facebook, Inc..
	\end{IEEEbiography}
	\begin{IEEEbiography}
	[
	{\includegraphics[width=1in,height=1.25in,clip,keepaspectratio]{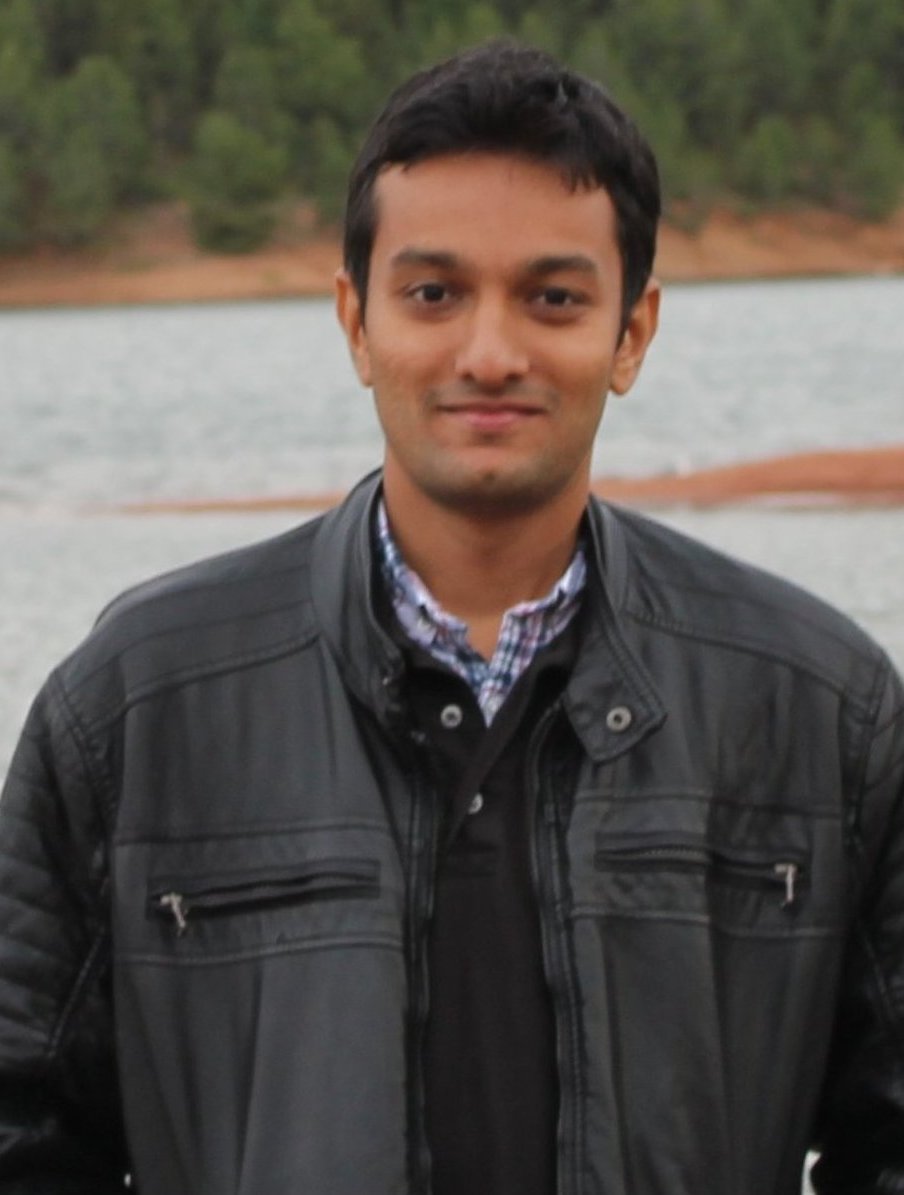}}]{Shreyas Sekar}
	is a Postdoctoral researcher in Electrical Engineering at the University of Washington, Seattle. He received his Ph.D. in Computer Science from Rensselaer Polytechnic Institute in $2017$ and a B.Tech in Electronics and Communications Engineering from the Indian Institute of Technology, Roorkee in $2017$. His research interests are in computational economics and algorithmic game theory. He is a recipient of the $2017$ Robert McNaughton prize for the best graduate student in CS at Rensselaer Polytechnic Institute.
\end{IEEEbiography}
\begin{IEEEbiography}
	[
	{\includegraphics[width=1in,height=1.25in,clip,keepaspectratio]{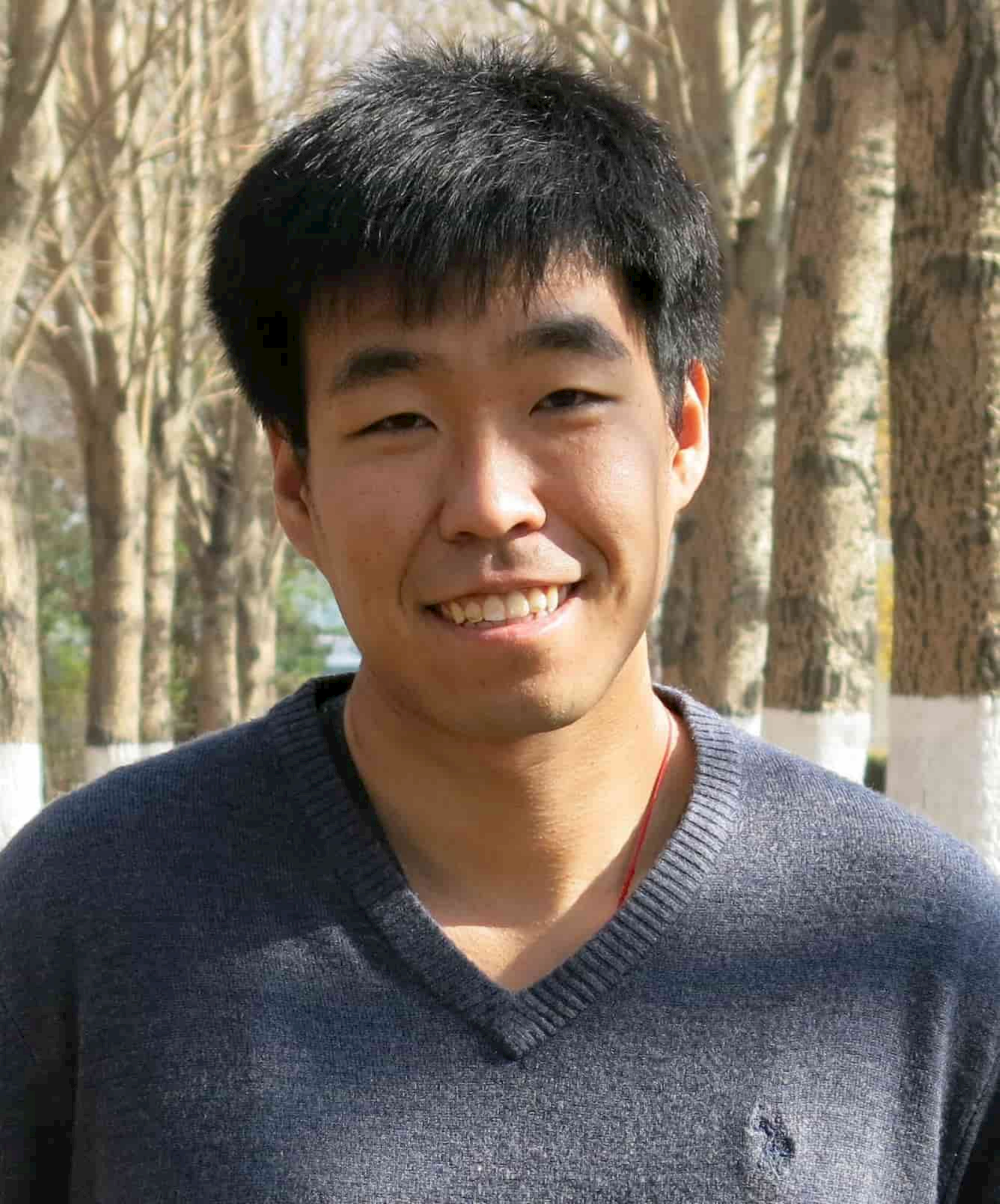}}
	] {Baosen Zhang} received his Bachelor of Applied Science in Engineering Science degree from the University of Toronto in 2008; and his PhD degree in Electrical Engineering and Computer Sciences from University of California, Berkeley in 2013.

	He was a Postdoctoral Scholar at Stanford University, affiliated with the Civil and Environmental Engineering and Management \& Science Engineering. He is currently an Assistant Professor in Electrical and Computer Engineering at the University of Washington, Seattle, WA. His research interests are in power systems and cyberphysical systems.
\end{IEEEbiography}

\clearpage

\appendices
\section{Proof for Proposition \ref{prop:expected_profit}}

We know that the producers adopt a mixed pricing strategy in the equilibrium for the pricing sub-game. Let $l_i, u_i$ denote the lower and upper support of the distribution corresponding to the mixed strategy of producer $i$. From previous results~\cite{AcemogluEtAl09,TaylorEtAl16} and assumption A2, we know that $l_i = l_j$ and $u_i = u_j = 1$ for all $i,j$. {Therefore, let $p$ denote the common lower support (price) for every producer, it is claimed in \cite{TaylorEtAl16} that only one producer can have an atom and it must be at upper support.} Using the basic properties of mixed strategy equilibria (e.g., see~\cite{AcemogluEtAl09}), we can infer that the total payment received by any producer $i$ equals its payment when this producer bids a deterministic price of $p$ and all of the other producers bid according to their mixed strategies in the pricing sub-game equilibrium. Explicitly writing this out, we get
\begin{align}
\pi_i(C_1, C_2, \ldots, C_N) &  = p \E[\min(C_i Z_i, D)]   =  p C_i\E[\min(Z_i, \frac{D}{C_i})] \label{eqn_profitsomeguy} \\
\pi_N(C_1, C_2, \ldots, C_N) & = p \E[\min(C_N Z_N, D)]  =  p C_N \E[\min(Z_i, \frac{D}{C_N})]. \nonumber
\end{align}

{Indeed, observe that when any one player selects a price of $p$, all of the capacity generated by this player must be sold because no other player can bid below this price and the probability that other players bid exactly at this price can be ignored due to the lack of atoms. Using the second equation above, we can explicitly characterize $p$ in terms of the payment received by the producer with the highest capacity investment, i.e.,
$$ p = \frac{\pi_N(C_1, C_2, \ldots, C_N)}{C_N \E[\min(Z_i, \frac{D}{C_N})]} .$$
Substituting the above into Equation~\ref{eqn_profitsomeguy} gives us:
$$ \pi_i(C_1, C_2, \ldots, C_N)  = \pi_N(C_1, C_2, \ldots, C_N) \frac{C_i\E[\min(Z_i, \frac{D}{C_i})]}{C_N \E[\min(Z_N, \frac{D}{C_N})]} .$$}  %

{In order to complete the proof, we need to show that $\pi_N(C_N, C_{-N}) = \E [\min\{(D-\sum_{j \neq N} Z_{j}C_{j})^{+}, Z_NC_N \}]$. The proof utilizes techniques very similar to those adopted in~\cite{AcemogluEtAl09}, so we only sketch the details and highlight the key differences. As argued before, in the equilibrium for the pricing sub-game, each producer $i$ plays a mixed strategy with prices in the range $[p_i,1]$. We first claim that in the equilibrium at most one player has an atom at the upper support price and this can only be producer $N$. The first part of the claim can be proved similar to the proof of Lemma $5$ in~\cite{AcemogluEtAl09}, namely that if multiple producers have an atom at the upper support, then at least one producer can deviate to having an atom at price $1-\epsilon$ instead and strictly improve its profit. For the second part, assume by contradiction that another producer (say $i$) has atom at the upper support. }

{By assumption, only producer $i$ has an atom at the upper support. So, when it bids a deterministic price of $p_i = 1$, the consumers would first consume from the lower priced producers $(j \neq i)$ and only the leftover demand would be satisfied by $i$. Therefore, we can infer that the payment received by $i$ equals: $\pi_i(\bm{C}) = \E [\min\{(D-\sum_{j \neq i} Z_{j}C_{j})^{+}, Z_iC_i \}.]$ Then, as per~\eqref{eqn_paymentasymmetric}, we have that $\pi_N(\bm{C}) = \frac{\pi_i(\bm{C}) C_N G_N}{C_iG_i},$ where $G_j = E[\min(Z_j, \frac{D}{C_j})]$ for all $j$. We have that:
\begin{align}
\pi_N(\bm{C}) & =  \E [\min\{(D-\sum_{j \neq i} Z_{j}C_{j})^{+}, Z_iC_i \}] \frac{C_N G_N}{C_iG_i} \notag\\
& <  \E [\min\{(D-\sum_{j \neq N} Z_{j}C_{j})^{+}, Z_NC_N \}] \frac{G_N}{G_i} \notag\\
& \leq \E [\min\{(D-\sum_{j \neq N} Z_{j}C_{j})^{+}, Z_NC_N \}]. \label{eqn_profitN_prop1}
\end{align}
The strict inequality comes from the fact that $C_i < C_N$ and that $Z_i, Z_N$ are identically distributed (conditional on $Z_{-i,N})$. The final inequality comes from the fact that $G_N \leq G_i$ since $C_i < C_N$. However,~\eqref{eqn_profitN_prop1} is a lower bound on the producer $N$'s payment if it deviates to bidding a deterministic price of $p_N = 1$, and so, we have a contradiction since producers cannot strictly improve their payments by deviating at equilibrium. In summary, we have that only producer $N$ can have an atom at the upper support. As we argued before, since no other producer as an atom at the upper support, when producer $N$ bids $p_N = 1$, it only gets the leftover demand and therefore, its payment is given by $\pi_N(\bm{C}) = \E [\min\{(D-\sum_{j \neq N} Z_{j}C_{j})^{+}, Z_NC_N \}.] \hfill \qed$}

\section{Proof for Theorem \ref{th_symm2}}
Suppose that at the optimum solution that minimizes Equation~\eqref{eq:social_opt}, the aggregate capacity investment by the producers is $C^*_{tot}$, and let $C^*_1 = C^*_2 = \ldots = C^*_N = \frac{C^*_{tot}}{N} \geq 0$. Then, in order to prove Theorem~\eqref{th_symm2}, it is sufficient to show that for any capacity $C_1, C_2, \ldots, C_N \geq 0$ with $\sum_{i=1}^N C_i = C^*_{tot}$, the following equation is satisfied:
$$ \E[(D-\sum_{i=1}^N Z_i C^*_i)^+] \leq \E[(D-\sum_{i=1}^N Z_i C_i)^+].$$

In fact, using the transformation that for any capacity vector $(C'_1, \ldots, C'_N)$, $\E[(D-\sum_{i=1}^N Z_i C'_i)^+] = \E D - \E[\min(D,\sum_{i=1}^N Z_i C'_i)]$, the above equation can be rewritten as:
\begin{equation}
\label{eqn_objective_rewrite}
\E[\min(D,\sum_{i=1}^N Z_i C^*_i)] \geq  \E[\min(D,\sum_{i=1}^N Z_i C_i)].
\end{equation}

So, to prove Theorem~\ref{th_symm2}, it is indeed sufficient to prove Equation~\eqref{eqn_objective_rewrite}. To prove this, we introduce Proposition \ref{prop:inequality} and Proposition \ref{prop:equality}.

\begin{proposition}\label{prop:inequality}
	Let us consider the following definitions:
	\begin{itemize}
		\item $X_1 \coloneqq Z_1 \frac{C_1}{N} + Z_2 \frac{C_2}{N} + \ldots + Z_N \frac{C_N}{N}$,
		\item $X_2 \coloneqq Z_1 \frac{C_2}{N} + Z_2 \frac{C_3}{N} + \ldots + Z_N \frac{C_1}{N}$
		\item $\ldots$
		\item $X_N \coloneqq Z_1 \frac{C_N}{N} + Z_2 \frac{C_1}{N} + \ldots + Z_N \frac{C_{N-1}}{N}$
	\end{itemize}
	That is:
	\begin{equation}\label{def:X}
	X_i = \sum_{j=1}^N Z_j \frac{C_{i+j-1}}{N},
	\end{equation}
	where $i+j-1$ is computed modulo $N$. 

	Then:
	\begin{equation}
	\label{eqn_opt_realcase}
	\E[\min(D,\sum_{i=1}^N X_i)] \geq \sum_{i=1}^N \E[\min(\frac{D}{N}, X_i)].
	\end{equation}
\end{proposition}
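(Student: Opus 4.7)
My plan is to derive Proposition \ref{prop:inequality} from a pointwise (pathwise) inequality that, once established, immediately yields the expectation version by taking expectations of both sides. The key elementary fact I would use is that for any $a \geq 0$ and any $b_1,\dots,b_N \geq 0$,
$$\sum_{i=1}^N \min(a, b_i) \;\leq\; \min\!\left(Na,\; \sum_{i=1}^N b_i\right),$$
which follows from the two trivial upper bounds $\sum_i \min(a,b_i) \leq \sum_i a = Na$ (each summand is at most $a$) and $\sum_i \min(a,b_i) \leq \sum_i b_i$ (each summand is at most $b_i$).

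Next, I would apply this inequality pathwise with $a = D/N$ and $b_i = X_i$. Both sides are legitimate because assumption B1 guarantees $D \geq 0$ and each $X_i$, being a nonnegative linear combination of the $Z_j \in [0,1]$ with nonnegative weights $C_k/N$, is nonnegative. Noting that $N \cdot (D/N) = D$, this yields, for every realization of the underlying randomness,
$$\sum_{i=1}^N \min\!\left(\tfrac{D}{N},\, X_i\right) \;\leq\; \min\!\left(D,\; \sum_{i=1}^N X_i\right).$$
Taking expectations on both sides, using linearity on the left and monotonicity of $\E$, then delivers exactly \eqref{eqn_opt_realcase}.

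I do not anticipate any real obstacle in this proof. A point worth noting is that the cyclic-shift structure baked into the definition of the $X_i$'s plays no role here, nor does the fact that the $X_i$'s are dependent (they share the same underlying $Z_j$'s). The bound is purely pathwise and treats the $X_i$'s as arbitrary nonnegative reals at each sample point. That cyclic-shift structure is instead what will matter for Proposition \ref{prop:equality}: there one will need a symmetry argument to show that the marginal distribution of each $X_i$ coincides with that of $\sum_j Z_j C^*/N$ under the symmetric allocation, so that the right-hand side of \eqref{eqn_opt_realcase} collapses to $\E[\min(D, \sum_i Z_i C^*_i)]$ and closes the loop back to \eqref{eqn_objective_rewrite}. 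But for Proposition \ref{prop:inequality} itself, the content is a single application of the elementary $\min$ inequality followed by taking expectations.
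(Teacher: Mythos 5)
Your proof is correct and follows essentially the same route as the paper's: both establish the inequality pathwise for each realization and then take expectations. The only difference is cosmetic --- the paper verifies the pointwise inequality by an explicit four-case analysis on how the $x_i$ compare to $d/N$, whereas your single observation $\sum_{i}\min(a,b_i)\leq\min\bigl(Na,\sum_i b_i\bigr)$ subsumes all four cases in one line, which is cleaner.
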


\begin{proof}
	We prove the inequality by proving that inequality holds for each realization of $X_1 = x_1, X_2 = x_2, \dots, X_N = x_N$ and each realization of $D = d$. Denote the whole index set by $\mathcal{N}$. In each realization, there are the following four scenarios:
	\begin{itemize}
		\item Each of $x_i$ is smaller than $\frac{d}{N}$. In this case, $\min(d,\sum_{i=1}^N x_i) = \sum_{i=1}^N x_i$, and $\min(\frac{d}{N}, x_i) = x_i$. So equality holds.
		\item Each of $x_i$ is bigger than $\frac{d}{N}$. In this case, $\min(d,\sum_{i=1}^N x_i) = d$, and $\min(\frac{d}{N}, x_i) = \frac{d}{N}$. So equality again holds.
		\item $x_j, j \in \mathcal{J} \subseteq \mathcal{N}$ is bigger than $\frac{d}{N}$, the rest are smaller than $\frac{d}{N}$, but $\sum_{i = 1}^{N} x_i \leq d$. In this case, $\min(d,\sum_{i=1}^N x_i) = \sum_{i=1}^N x_i$. The RHS of \eqref{eqn_opt_realcase} reduces to $\frac{d}{N}|\mathcal{J}| + \sum_{i \in \mathcal{N}\setminus \mathcal{J}}x_i \leq \sum_{i=1}^N x_i$. Therefore, the inequality holds.
		\item $x_j, j \in \mathcal{J}\subseteq \mathcal{N}$ is smaller than $\frac{d}{N}$, the rest are bigger than $\frac{d}{N}$, but $\sum_{i = 1}^{N} x_i \geq d$. The RHS of \eqref{eqn_opt_realcase} reduces to $\frac{d}{N}(N-|\mathcal{J}|) + \sum_{j \in \mathcal{J}}x_j \leq d$. Therefore, the inequality holds.
	\end{itemize}

	In all cases, we have that:
	\begin{equation}
	\E[\min(D,\sum_{i=1}^N X_i)] \geq \sum_{i=1}^N \E[\min(\frac{D}{N}, X_i)].
	\end{equation}
\end{proof}

\begin{proposition}\label{prop:equality}
	With the assumptions in Proposition \ref{prop:inequality}, we have:
	\begin{equation}
	\E[\min(\frac{D}{N}, X_i)] =  \E[\min(\frac{D}{N}, X_1)], \forall i \in \mathcal{N}.
	\end{equation}
\end{proposition}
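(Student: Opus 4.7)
The plan is to establish that $X_i$ and $X_1$ are identically distributed as random variables; the claimed equality of expectations then follows immediately by pairing with $\min(D/N, \cdot)$, given that $D$ is independent of $(Z_1, \ldots, Z_N)$ in the model.

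To see that $X_i \stackrel{d}{=} X_1$, I would reindex the sum defining $X_i$. Setting $k = i+j-1 \pmod{N}$, so that $j = k-i+1 \pmod{N}$, we can rewrite
\[
X_i = \sum_{j=1}^N Z_j \frac{C_{i+j-1}}{N} = \sum_{k=1}^N Z_{\sigma_i(k)} \frac{C_k}{N},
\]
where $\sigma_i$ is the cyclic shift $k \mapsto k-i+1 \pmod{N}$ on $\{1, 2, \ldots, N\}$. Thus $X_i$ uses exactly the same weights $(C_k/N)_{k=1}^N$ as $X_1 = \sum_{k=1}^N Z_k (C_k/N)$, but evaluated on a cyclically permuted copy of the $Z$'s. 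Since the $Z_j$'s are i.i.d.\ under assumption A1, the vector $(Z_{\sigma_i(1)}, \ldots, Z_{\sigma_i(N)})$ has the same joint distribution as $(Z_1, \ldots, Z_N)$, and so $X_i$ has the same distribution as $X_1$.

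Because $D$ is independent of the $Z_j$'s, the joint distribution of $(D, X_i)$ coincides with that of $(D, X_1)$ for every $i$. Applying the bounded continuous function $(d,x) \mapsto \min(d/N, x)$ and taking expectations therefore yields
\[
\E[\min(D/N, X_i)] = \E[\min(D/N, X_1)], \quad \forall\, i \in \mathcal{N}.
\]

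This is essentially a one-line consequence of the exchangeability of i.i.d.\ random variables, so I do not anticipate any substantive obstacle. The only piece of bookkeeping worth being careful about is verifying that the modular reindexing $\sigma_i$ is truly a bijection of $\{1, \ldots, N\}$, which is immediate from the definition of a cyclic shift.
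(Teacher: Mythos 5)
Your argument is correct and is essentially the paper's own proof: the paper likewise invokes exchangeability of the i.i.d.\ $Z_j$'s (its Fact~1) to replace $Z_j$ by $Z_{i+j-1}$ and then reindexes the cyclic sum to recover $X_1$. Your explicit remark that the reindexing map is a bijection and that $D$ must be independent of the $Z_j$'s makes precise what the paper leaves implicit, but the route is the same.
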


To prove Proposition \ref{prop:equality}, we introduce Fact \ref{fact:exchangeable}. It is based on exchangeability of independent random variables, and that $Z_i$'s' are i.i.d. copies. We refer to \cite{wiki_exch} for interested users and omit the proof here.
\begin{fact}\label{fact:exchangeable}
	If $Z_i $'s are i.i.d. random variables,  then:
	\begin{equation}
	f(z_1, z_2, \dots, z_N) = f(z_{S_1}, z_{S_2}, \dots, z_{S_N}),
	\end{equation}
	where $S_1, S_2, \dots, S_N$ is a permutation of $1, 2, \dots, S_N$, and $f(\cdot)$ is the density function.
\end{fact}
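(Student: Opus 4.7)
The plan is to derive the exchangeability identity directly from the two defining properties of i.i.d. random variables: independence gives a product form for the joint density, and the identically distributed property ensures that every factor in that product is the same function $g$. Once both are in hand, the desired invariance under permutation reduces to the commutativity of multiplication.

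Concretely, I would first invoke independence of $Z_1, \ldots, Z_N$ to write the joint density as
\[
f(z_1, z_2, \ldots, z_N) \;=\; \prod_{i=1}^{N} g_i(z_i),
\]
where $g_i$ denotes the marginal density of $Z_i$ (guaranteed to exist and to be bounded and continuous by assumption A1). Next I would use that the $Z_i$'s are identically distributed to conclude $g_i \equiv g$ for all $i$, so that
\[
f(z_1, z_2, \ldots, z_N) \;=\; \prod_{i=1}^{N} g(z_i).
\]
Then, given any permutation $S_1, S_2, \ldots, S_N$ of $1, 2, \ldots, N$, the map $i \mapsto S_i$ is a bijection on $\{1,\ldots,N\}$, so reindexing the product by $j = S_i$ yields
\[
\prod_{i=1}^{N} g(z_{S_i}) \;=\; \prod_{j=1}^{N} g(z_j) \;=\; f(z_1, z_2, \ldots, z_N).
\]
Combining the two displayed identities gives exactly $f(z_1, \ldots, z_N) = f(z_{S_1}, \ldots, z_{S_N})$, which is the claim.

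There is really no hard step here, since the statement is a direct consequence of the definition of i.i.d.; the only mild care needed is to justify rigorously the bijective reindexing of the product. I would state this as a one-line remark that any permutation is a bijection on the index set, so the multiset of factors $\{g(z_{S_i})\}_{i=1}^N$ is identical to $\{g(z_i)\}_{i=1}^N$, and products over equal multisets coincide by commutativity. If one wanted to be pedantic about densities being defined only almost everywhere, one could restrict attention to the (full-measure) set where all $g(z_i)$ are simultaneously well-defined, but this technicality does not affect the argument and could be omitted for brevity, consistent with the paper's decision to reference the result rather than prove it in full.
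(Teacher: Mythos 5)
Your proof is correct and is the standard argument (independence gives a product of marginals, identical distribution makes all factors equal, and permutation invariance follows from commutativity); the paper itself omits the proof entirely and simply cites exchangeability of i.i.d. random variables, so your write-up supplies exactly the argument the paper is implicitly relying on.
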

Now we proceed to prove Proposition \ref{prop:equality}.
\begin{proof}[Proof of Proposition \ref{prop:equality}]
	Using Fact \ref{fact:exchangeable}, we show that
	$\E[\min(\frac{D}{N}, X_i)]$ is the same for all $i$:
	\begin{equation}
	\begin{aligned}
	\E[\min(D/N, X_i)]  & = \E[\min(D/N,\sum_{j=1}^N Z_j \frac{C_{i+j-1}}{N})] \\
	& \stackrel{(a)}{=} \E[\min(D/N,\sum_{j=1}^N Z_{i+j-1} \frac{C_{i+j-1}}{N})] \\
	& \stackrel{(b)}{=} \E[\min(D/N,\sum_{k = 1}^N Z_{k} \frac{C_{k}}{N})] \\
	& =  \E[\min(D/N, X_1)],
	\end{aligned}
	\end{equation}
	where $(a)$ is based on the observation that $[i, i+1, \dots, i+N-1], \mod{N}, \forall i$ is a permutation of $[1, 2, \dots, N]$, and $(b)$ is the result of rearranging $C_{i+j-1}$.
\end{proof}



Now, we are ready to prove ~\eqref{eqn_objective_rewrite}. Let $(X_i)^N_{i=1}$ be as defined in the statement of Proposition~\ref{prop:inequality}. The LHS of \eqref{eqn_objective_rewrite} can be written as:
{
\begin{equation}\label{eq:final_proof}
\begin{aligned}
\E[\min(D,\sum_{i=1}^N Z_i C^*_i)]
& =  \E[\min(D,\sum_iZ_i C^*_{tot}/N)]  \\
& \stackrel{(a)}{=}  \E[\min(D,\sum_iZ_i \frac{\sum_j C^*_j}{N})]  \\
& \stackrel{(b)}{=}  \E[\min(D,\sum_iZ_i X_i)]  \\
& \stackrel{(c)}{\geq}  \sum_{i=1}^N \E[\min(\frac{D}{N}, X_i)]\\
& \stackrel{(d)}{=}  \sum_{i=1}^N \E[\min(\frac{D}{N}, X_1)] \\
& = N \E[\min(\frac{D}{N}, X_1)] \\
& = \E[\min(D, \sum_{i=1}^N Z_i C_i)],
\end{aligned}
\end{equation}}
{where we decompose $C^*_{tot}$ into individual $C^*_j$'s in $(a)$ and use definition of $X_i$ in Proposition \ref{prop:inequality}. Inequality $(c)$ is again based on Proposition \ref{prop:inequality}  and $(d)$ is based on Proposition \ref{prop:equality}}. This concludes the proof. \hfill \qed

\section{Proof for Theorem \ref{th:social_nash}}
Suppose that there are $N$ producers in the market,
and suppose that the optimal capacity from solving \eqref{eq:social_opt} is denoted by $C^*_1, C^*_2,..., C^*_N$, we argue that $C^*_1, C^*_2,..., C^*_N$ is a Nash equilibrium for the capacity game in \eqref{eq:cap_game}.

We prove the equilibrium for player 1, and the same argument holds for any of the rest players. To show this, we rewrite $C^*_1$ as the following:
{
\begin{equation}
\begin{aligned}
C^*_1 & = arg\min_{C_1}\ \gamma C_1 + \gamma \sum_{i = 2}^{N} C^*_i + \E \{(D-\sum_{i = 2}^{N} Z_iC_i - Z_1C_1)^+ \}\\
& = arg\min_{C_1}\ \gamma C_1 + D - \E \min \{D, \sum_{i = 2}^{N} Z_iC_i  + Z_1C_1\} \\
& = arg\min_{C_1}\ \gamma C_1 -\E \min \{D - \sum_{i = 2}^{N} Z_iC_i, Z_1C_1\} \\
& = arg\min_{C_1} \ \gamma C_1 - \E \min \{(D - \sum_{i = 2}^{N} Z_iC_i)^+ , Z_1C_1\} \\ & \qquad \qquad \qquad -  \min \E \{(D - \sum_{i = 2}^{N} Z_iC_i)^- , Z_1C_1\} \\
& = arg\min_{C_1} \ \gamma C_1 - \E \min \{(D - \sum_{i = 2}^{N} Z_iC_i)^+ , Z_1C_1\} \\
 & - \E (D - \sum_{i = 2}^{N} Z_iC_i)^-  \\
& = arg\max_{C_1} \  \E \min \{(D - \sum_{i = 2}^{N} Z_iC_i)^+ , Z_1C_1\}  - \gamma C_1 \\
& = C^{\diamond}_1,
\end{aligned}
\end{equation}
}
which characterizes the optimal solution to the game depicted in \eqref{eq:cap_game}. \hfill \qed

\section{Proof for Theorem \ref{th:symmetric_correlation}}

Similar to the proof for Theorem \ref{th_symm2}, we need to show that \eqref{eqn_objective_rewrite} is true when $Z_i = \bar{Z} + \hat{Z}_i$ as given in \eqref{eq:decomposition}. The proof boils down to show that Proposition \ref{prop:inequality} and Proposition \ref{prop:equality} are true under such asssumption on correlation. Note that the proof for Proposition \ref{prop:inequality} does not require that $Z_i$'s to be i.i.d., therefore naturally carries over.
To show Proposition \ref{prop:equality}, we need Lemma \ref{lemma:exchangeable_2}. Then these two propositions validate \eqref{eqn_objective_rewrite}, which concludes the proof.
\begin{lemma}\label{lemma:exchangeable_2}
	If $Z_i = \bar{Z} + \hat{Z}_i$ as \eqref{eq:decomposition}, then:
	$$f(z_1, z_2, \dots, z_N) = f(z_{S_1}, z_{S_2}, \dots, z_{S_N}),$$ where $S_1, S_2, \dots, S_N$ is a permutation of $1, 2, \dots, S_N$, and $f(\cdot)$ is the density function.
\end{lemma}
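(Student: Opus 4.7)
The plan is to reduce the exchangeability of $(Z_1, \ldots, Z_N)$ to the manifest exchangeability of the i.i.d. components $(\hat{Z}_1, \ldots, \hat{Z}_N)$ by conditioning on the shared term $\bar{Z}$. Under assumption A3, both $\bar{Z}$ and each $\hat{Z}_i$ admit bounded, continuous densities (call them $f_{\bar{Z}}$ and $f_{\hat{Z}}$), the $\hat{Z}_i$'s are i.i.d., and $\bar{Z}$ is independent of the family $(\hat{Z}_i)_{i=1}^N$. These properties are exactly what is needed to write down a clean product factorization of the conditional joint density.

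First I would record the conditional density of $(Z_1, \ldots, Z_N)$ given $\bar{Z} = \bar{z}$. Because $Z_i = \bar{z} + \hat{Z}_i$ on this conditioning event and the $\hat{Z}_i$'s are i.i.d.\ and independent of $\bar{Z}$, we obtain
$$ f_{Z_1, \ldots, Z_N \mid \bar{Z}}(z_1, \ldots, z_N \mid \bar{z}) \;=\; \prod_{i=1}^N f_{\hat{Z}}(z_i - \bar{z}). $$
Integrating out $\bar{z}$ against the marginal $f_{\bar{Z}}$ yields the unconditional joint density
$$ f(z_1, \ldots, z_N) \;=\; \int f_{\bar{Z}}(\bar{z}) \prod_{i=1}^N f_{\hat{Z}}(z_i - \bar{z}) \, d\bar{z}. $$

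The second step is the permutation argument. For any permutation $(S_1, \ldots, S_N)$ of $(1, \ldots, N)$, the product $\prod_{i=1}^N f_{\hat{Z}}(z_i - \bar{z})$ equals $\prod_{i=1}^N f_{\hat{Z}}(z_{S_i} - \bar{z})$, since multiplication is commutative and every factor uses the \emph{same} density $f_{\hat{Z}}$. Substituting this identity into the integrand and re-applying the displayed formula gives $f(z_1, \ldots, z_N) = f(z_{S_1}, \ldots, z_{S_N})$, which is the claim.

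I do not expect a genuine obstacle here; the only technicalities are verifying that the conditional density is well-defined (guaranteed by A3, which imposes A1 on both $\bar{Z}$ and each $\hat{Z}_i$) and invoking the independence plus identical distribution of the $\hat{Z}_i$'s to license the product form. The conceptual point worth highlighting in the write-up is that a \emph{common} additive shift $\bar{Z}$ does not destroy exchangeability, because it acts identically on every coordinate and therefore commutes with relabelings; the exchangeability ultimately comes entirely from the i.i.d.\ idiosyncratic components.
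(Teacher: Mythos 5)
Your proof is correct and follows essentially the same route as the paper's: condition on $\bar{Z}=\bar{z}$, use independence (A3) and the i.i.d.\ structure of the $\hat{Z}_i$'s to get a permutation-invariant conditional density, and integrate out $\bar{z}$. Your explicit product factorization $\prod_{i=1}^N f_{\hat{Z}}(z_i-\bar{z})$ is just a more concrete rendering of the paper's appeal to its Fact~1 on exchangeability of i.i.d.\ densities.
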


\begin{proof}
	\begin{equation*}
	\begin{aligned}
	& f(z_1, z_2, \dots, z_N)\\
	 & = f(\bar{z}+\hat{z}_1, \bar{z}+\hat{z}_2, \dots, \bar{z}+\hat{z}_N)  \\
	& = \int_{\bar{z}} f( \{\bar{z}+\hat{z}_1, \bar{z}+\hat{z}_2, \dots, \bar{z}+\hat{z}_N\} | \bar{Z} = \bar{z}) f(\bar{Z} = \bar{z}) d{\bar{z}} \\
	& \stackrel{(a)}{=}  \int_{\bar{z}} f( \bar{z}+\hat{z}_1, \bar{z}+\hat{z}_2, \dots, \bar{z}+\hat{z}_N) f(\bar{Z} = \bar{z}) d{\bar{z}} \\
	& \stackrel{(b)}{=}  \int_{\bar{z}} f( \bar{z}+\hat{z}_{S_1}, \bar{z}+\hat{z}_{S_2}, \dots, \bar{z}+\hat{z}_{{S_N}}) f(\bar{Z} = \bar{z}) d{\bar{z}} \\
	& =  f(z_{S_1}, z_{S_2}, \dots, z_{S_N}),
	\end{aligned}
	\end{equation*}
	where $(a)$ is based on the assumption that $\bar{Z}$ is independent of $\hat{Z}_i$ (assumption A3), and $(b)$ is based on Fact \ref{fact:exchangeable}.
\end{proof}

\section{Proof for Theorems \ref{maintheorem} and Theorem \ref{theorem:correlation}}

\subsection{Berry-Esseen Theorem}
The following Lemma is useful to facilitate the proofs of Theorem \ref{maintheorem} and Theorem \ref{theorem:correlation}. It relates the behavior of the mean of independent random variables to a standard Gaussian distribution in terms of CDF.
\begin{lemma}[Berry-Esseen Theorem]\label{lemma:berry}
	There exists a positive constant $\alpha$, such that if $X_1$, $X_2$, $\dots$, $X_N$, are independent random variables with $\E(X_i) = 0$, $\E(X_i^2) = \sigma^2 > 0$, and $\E(|X_i|^3) = \rho_i < \infty$, and if we define $S_{N}=\frac{\sum_i X_i}{\sqrt{\sum_i \sigma_i^2}}$, then $F_N$, the cumulative distribution function of
	$S_{N}$
	is close to $\Phi$, the CDF of the standard Gaussian distribution. This is mathematically interpreted as:
	\begin{equation}
	|F_N(x) - \Phi(x)| \leq \alpha \psi,
	\end{equation}
	where $\psi = (\sum_i \sigma_i^2)^{-\frac{1}{2}} \max_{1 \leq i \leq N} \frac{\rho_i}{\sigma_i^2}$.
\end{lemma}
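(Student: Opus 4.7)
Since Lemma \ref{lemma:berry} is a version of the classical Berry-Esseen theorem, my natural first move would be to cite it rather than prove it from scratch: a reference to Esseen's original 1945 paper or to Feller Vol.\ II, Chapter XVI, essentially does the job, and minor bookkeeping recovers the specific form stated here. The result is not proprietary to this paper; it is invoked purely as an off-the-shelf analytic tool that the forthcoming proofs of Theorems \ref{maintheorem} and \ref{theorem:correlation} will leverage to replace sums of independent random variables by a Gaussian surrogate with an $O(N^{-1/2})$ error.

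If a self-contained argument were desired, the standard Fourier-analytic route is what I would sketch, in three stages. First, invoke Esseen's smoothing inequality, which bounds $\sup_x |F_N(x) - \Phi(x)|$ by $c_1 \int_{-T}^{T} \frac{|\phi_{S_N}(t) - e^{-t^2/2}|}{|t|}\, dt + c_2/T$ for any $T>0$, thereby reducing the CDF comparison to a characteristic-function comparison. Second, use independence to factor $\phi_{S_N}(t) = \prod_i \phi_{X_i}(t/s_N)$ with $s_N^2 = \sum_i \sigma_i^2$, and Taylor expand each factor using $\E X_i = 0$, $\E X_i^2 = \sigma_i^2$, $\E |X_i|^3 = \rho_i$ to obtain $\phi_{X_i}(u) = 1 - \tfrac{1}{2}\sigma_i^2 u^2 + O(\rho_i |u|^3)$. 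Taking logarithms, summing, and comparing with $-t^2/2$ yields $|\log \phi_{S_N}(t) + t^2/2| \leq c\, \psi\, |t|^3$ on a suitable range of $t$; the particular form of $\psi$ stated in the lemma emerges by bounding $\sum_i \rho_i (t/s_N)^3$ crudely via $|t|^3 \max_i (\rho_i/\sigma_i^2) \sum_i \sigma_i^2 / s_N^3 = |t|^3\, \psi$.

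Finally, plug the characteristic-function bound into the smoothing inequality and optimize the window $T$: taking $T$ of order $1/\psi$ balances the two error contributions and gives $\sup_x |F_N(x) - \Phi(x)| \leq \alpha\, \psi$ for an absolute constant $\alpha$. The main obstacle in this approach is the careful bookkeeping needed to extract an explicit universal $\alpha$ and to verify that the logarithm expansion is valid on the chosen range of $t$ (which requires $\psi |t|$ to be small); since the application in Theorems \ref{maintheorem} and \ref{theorem:correlation} only needs the existence of \emph{some} absolute $\alpha$, I would in practice simply cite the classical result rather than chase the constant.
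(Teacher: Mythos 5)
The paper does not prove this lemma at all --- it simply states the classical Berry--Esseen theorem as an off-the-shelf tool for the proofs of Theorems \ref{maintheorem} and \ref{theorem:correlation}, exactly as you recommend. Your citation-first stance matches the paper's treatment, and your sketch of the standard Esseen-smoothing/characteristic-function argument is a correct (if unnecessary) bonus.
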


\subsection{Some useful lemmas}
Before the detailed proof, let us visit some useful propositions and lemmas that assist the proofs for Theorem \ref{maintheorem} and Theorem \ref{theorem:correlation}. In what follows, we assume without loss of generality that given any solution $(C_1, C_2, \ldots, C_N)$, it must be the case that $C_1 \leq C_2 \leq \ldots \leq C_N$.

\begin{proposition}\label{lem_derivation}
	The partial derivative of \\$\pi_N(\bm{C}_{-N}, C_N) = \E (\min(D - \sum_{j = 1}^{N-1}C_iZ_i)^+, C_N)$ is:
	\begin{equation}
	\frac{\partial \pi_N(\bm{C}_{-N}, C_N)}{\partial C_N} = \E \left[\mathbbm{1}\{(D - \sum_{j = 1}^{N-1}C_jZ_j)^+ \geq C_N Z_N\right]Z_N \},
	\end{equation}
	and for all $i \neq N$.
	\begin{equation}
	\frac{\partial \pi_N(\bm{C}_{-N}, C_N)}{\partial C_i} = \E \left[\mathbbm{1}\{0 \leq (D - \sum_{j = 1}^{N-1}C_jZ_j) \leq C_N Z_N\}(-Z_i)\right].
	\end{equation}
\end{proposition}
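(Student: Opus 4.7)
\begin{proof*}[Proof sketch of Proposition \ref{lem_derivation}]

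The plan is to interpret the expectation as an integral against the joint density of $(D, Z_1, \ldots, Z_N)$ and then to differentiate under the integral sign, using the fact that the integrand is piecewise linear in each $C_i$ with transitions occurring on a set of Lebesgue measure zero (thanks to assumptions A1 and B1, which guarantee bounded, continuous densities).

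First I would fix a realization $(d, z_1, \ldots, z_N)$ and analyze the function
\[
g(C_1, \ldots, C_N) \;=\; \min\!\bigl( (d - \textstyle\sum_{j=1}^{N-1} C_j z_j)^+,\; C_N z_N \bigr)
\]
pointwise. Setting $B := d - \sum_{j<N} C_j z_j$, I would split into three regions: (i) $B \leq 0$, where $g \equiv 0$ and all partial derivatives vanish; (ii) $0 < B \leq C_N z_N$, where $g = B$, so $\partial g/\partial C_N = 0$ and $\partial g/\partial C_i = -z_i$ for $i<N$; and (iii) $B > C_N z_N$, where $g = C_N z_N$, so $\partial g/\partial C_N = z_N$ and $\partial g/\partial C_i = 0$ for $i<N$. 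Combining these three cases yields exactly the indicator expressions stated in the proposition, pointwise except on the boundary set $\{B = 0\} \cup \{B = C_N z_N\}$.

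Next I would justify the interchange of $\partial/\partial C_i$ and $\E$. Because $0 \leq Z_j \leq 1$ and $D \leq D_{\max}$, the integrand $g$ is bounded uniformly by $D_{\max}$, and its difference quotients in $C_i$ are uniformly bounded by $|Z_i|\leq 1$. The dominated convergence theorem therefore allows differentiation under the expectation, provided the pointwise derivative exists almost surely. The two boundary sets $\{D = \sum_{j<N} C_j Z_j\}$ and $\{D - \sum_{j<N} C_j Z_j = C_N Z_N\}$ each have probability zero: under A1 and B1 the joint distribution of $(D, Z_1, \ldots, Z_N)$ has a density with respect to Lebesgue measure, so any affine hypersurface has measure zero. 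Hence the pointwise derivative exists a.s., and the interchange is valid.

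The main (mild) obstacle is the boundary case where $B = 0$ but $C_N z_N > 0$, since there the left and right partial derivatives with respect to $C_i$ differ ($-z_i$ versus $0$). I would handle this by observing that the set of $(d, z_1, \ldots, z_{N-1})$ on which $B = 0$ has measure zero, so this ambiguity does not affect the expectation and the indicator $\mathbbm{1}\{0 \leq B \leq C_N Z_N\}$ may equally be written with a strict inequality. This gives the claimed expressions and completes the proof. $\hfill\qed$

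\end{proof*}
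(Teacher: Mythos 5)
Your proof is correct. The paper actually states Proposition~\ref{lem_derivation} without any proof at all (it is simply invoked in the proofs of Lemma~\ref{lem_symmetric_property} and later results), so there is no authorial argument to compare against; your pointwise case analysis on $B = D - \sum_{j<N} C_j Z_j$, the Lipschitz bound on the difference quotients, and dominated convergence are exactly the standard route one would take to justify the stated formulas. One small imprecision: assumption B1 only bounds $D$ and does not guarantee it has a Lebesgue density (elsewhere the paper even treats $D$ as deterministic), so you should not assert that the joint law of $(D, Z_1,\ldots,Z_N)$ is absolutely continuous; instead, condition on $D=d$ and use the densities of the $Z_j$'s from A1 to conclude that $\{ \sum_{j<N} C_j Z_j = d\}$ and $\{d - \sum_{j<N} C_j Z_j = C_N Z_N\}$ are null sets whenever the relevant capacities are positive, which is all the argument needs.
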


Based on Proposition \ref{lem_derivation}, we have the following lemma on the optimality of the symmetric Nash equilibrium of the game.
\begin{lemma}\label{lem_symmetric_property}
	If the invested capacity {at Nash equilibrium} is symmetric, i.e., $C_1 = C_2 = \dots = C_N = C$, then:
	\begin{itemize}
		\item $\gamma = \E \left[\mathbbm{1}\left\{(D-C\sum_{j \neq i}^NZ_j)^+\geq C Z_i\right\} Z_i \right]$, where $\mathbbm{1}\{{\cdot}\}$ is the indicator function and takes value 1 if the argument is true, otherwise takes value 0.
		\item $NC \leq \E D/\gamma$.
	\end{itemize}
\end{lemma}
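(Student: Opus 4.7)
The plan is to obtain both bullets from the stationarity condition at a symmetric Nash equilibrium, and then manipulate the resulting identity via a pointwise/symmetry argument.

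For the first bullet, I will take the derivative of each player's payoff at the symmetric point. Under assumption A1 the densities of the $Z_i$'s are continuous, so no atoms sit on boundary events and the expected profit $\pi_i(C_i,\bm{C}_{-i})$ is differentiable in $C_i$. At the putative equilibrium, relabeling lets me view player $i$ as the largest-capacity producer, so Proposition~\ref{lem_derivation} applies; evaluated at the symmetric point $(C,\ldots,C)$ it gives
\[
\frac{\partial \pi_i}{\partial C_i} = \E\!\left[\mathbbm{1}\{(D-C\textstyle\sum_{j\neq i}Z_j)^{+}\geq CZ_i\}\,Z_i\right].
\]
The first-order condition $\partial \pi_i/\partial C_i=\gamma$ (using $C>0$, which holds whenever entry is profitable) yields exactly the claimed identity.

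For the second bullet, I will multiply the identity by $NC$, sum over $i$ using the symmetry of the $Z_i$'s, and swap sum with expectation. The key pointwise observation is: for every realization of $(D,Z_1,\ldots,Z_N)$, the event $\{(D-C\sum_{j\neq i}Z_j)^{+}\geq CZ_i\}$ coincides with $\{D\geq C\sum_{j=1}^N Z_j\}$ whenever $Z_i>0$, and on $\{Z_i=0\}$ the product $\mathbbm{1}\{\cdot\}Z_i$ vanishes, so the two forms agree identically. Hence
\[
N\gamma C \;=\; \sum_{i=1}^N \E\!\left[\mathbbm{1}\{D\geq C\textstyle\sum_j Z_j\}\,CZ_i\right] \;=\; \E\!\left[\mathbbm{1}\{D\geq C\textstyle\sum_j Z_j\}\,C\textstyle\sum_j Z_j\right].
\]
On the indicator event the integrand is bounded above by $D$, so the right-hand side is at most $\E D$, which rearranges to $NC\leq \E D/\gamma$.

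I expect the main subtle point to be invoking the derivative formula of Proposition~\ref{lem_derivation}: that proposition is stated for the \emph{strictly}-largest-capacity producer, whereas at a symmetric equilibrium every producer is tied for largest. The rescue is again assumption A1: continuity of the density of $Z_i$ makes the knife-edge event $\{CZ_i=(D-C\sum_{j\neq i}Z_j)^{+}\}$ have probability zero, so the one-sided derivatives coincide and the formula extends to the symmetric point. Once that is in hand, the second bullet is an essentially calculation-free consequence of collapsing the $i$-dependent indicators into the single event $\{D\geq C\sum_j Z_j\}$.
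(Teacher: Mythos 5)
Your proof is correct, and the first bullet is handled exactly as in the paper: differentiate the symmetric-point payoff via Proposition~\ref{lem_derivation} and set the derivative equal to $\gamma$ (the paper is equally silent about the tie-breaking issue you flag, so your remark about A1 making the knife-edge event null is a welcome addition rather than a repair). For the second bullet your route differs in one step. The paper obtains the aggregate identity $N\gamma = \E\bigl[\mathbbm{1}\{D\geq C\sum_i Z_i\}\sum_i Z_i\bigr]$ from the first-order condition of the \emph{social planner's} problem, implicitly relying on the fact that the symmetric Nash capacity coincides with the planner's symmetric optimum (a fact established elsewhere via Theorems~\ref{th_symm2}, \ref{th:social_nash} and \ref{theorem:unique}). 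You instead derive the same identity directly from the per-player Nash condition by observing that, pointwise, the event $\{(D-C\sum_{j\neq i}Z_j)^{+}\geq CZ_i\}$ collapses to the $i$-independent event $\{D\geq C\sum_j Z_j\}$ wherever $Z_i>0$, and then summing over $i$. This is self-contained and avoids any appeal to the planner's problem or to the equivalence between the two solutions, which is arguably cleaner given that the lemma is later used as an ingredient in proving properties of equilibria. The final bounding step ($C\sum_j Z_j\leq D$ on the indicator event, hence $N\gamma C\leq \E D$) is identical in both arguments.
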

\begin{proof}

	We begin by proving the first part. Suppose that capacities are the same, i.e., $C_1 = C_2 = \dots = C_N = C$, {the profit for each producer in the capacity game is}:
	{
	\begin{equation}
	 \E \left[\min \left( (D - C \sum_{j\neq i}^{N}Z_j)^+, CZ_i  \right) \right] - \gamma C.
	\end{equation}}

	The optimality of a player $i$ in the game is captured as the following:
	\begin{equation}
	\frac{\partial}{\partial C_i} \E\left[\min\left((D-C\sum_{j \neq i}^NZ_j)^+,C_iZ_i\right)\right]- \gamma =0.
	\end{equation}
	Differentiating in the expectation with respect to each individual $C_i$ and based on Proposition \ref{lem_derivation}, we have:
	\begin{equation}\label{eqn:gamma}
	\gamma=\E \left[\mathbbm{1}\left\{(D-C\sum_{j \neq i}^NZ_j)^+\geq C Z_i\right\} Z_i \right],
	\end{equation}
	where $\mathbbm{1}\{{\cdot}\}$ is the indicator function and takes value 1 if the argument is true, otherwise takes value 0.

	Then we proceed to prove the second part, i.e., $NC \leq \E D/\gamma$.

	When there is a social planner making centralized decision as described in \eqref{eq:social_opt}, the total payment from the electricity consumers in the system is
	\begin{equation}\label{eqn:socialmin}
	\min_{C_i, \forall i= 1, 2, \dots, N} \E\left[ (D-  \sum_{i=1}^N C_i Z_i)^+\right] + \gamma \sum_{i=1}^NC_i,
	\end{equation}
	where each site should have the same optimal invested capacity $C_1 = C_2 = \dots = C_N = C$ as discussed in Section \ref{sec:mainresults}. This also coincides with the symmetric Nash equilibrium in the capacity game.

	To show that $NC$ is a bounded by a constant, assuming differentiability and based on \eqref{eqn:socialmin}, we know
	\begin{align}
	\gamma N&=\E\left[1(D \geq C \sum_{i =1}^{N} Z_i) \sum_{i =1}^{N} Z_i\right] \\
	&=\E\left[1(\sum_{i =1}^{N} Z_i \leq D/C) \sum_{i =1}^{N} Z_i \right] \\
	& \leq \E\left[1(\sum_{i =1}^{N} Z_i \leq D/C) D/C \right] \\
	& \leq \E D/C \label{eqn_symmetricupper},
	\end{align}
	rearranging, we get $NC \leq \E D/\gamma$.
\end{proof}

Based on Lemma \ref{lem_symmetric_property}, we now present two lemmas on arbitrary Nash equilibria of the game in Lemma \ref{lem_capupperbound} and Lemma \ref{lem_expectationgames}.
\begin{lemma}
	\label{lem_capupperbound}
	Given any equilibrium solution of the two-level game $(C^{\diamond}_1, C^{\diamond}_2, \ldots, C^{\diamond}_N)$, it must be the case that $C^{\diamond}_1 \leq \frac{\E D}{\gamma N}$ and $C^{\diamond}_N \leq \frac{\E D}{\gamma}$.
\end{lemma}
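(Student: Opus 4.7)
The plan is to establish the single stronger bound $\sum_{i=1}^N C^{\diamond}_i \leq \frac{\E D}{\gamma}$; both claims of the lemma follow at once. Indeed, since $C^{\diamond}_1$ is the smallest entry and all entries are nonnegative, $N C^{\diamond}_1 \leq \sum_{i=1}^N C^{\diamond}_i \leq \frac{\E D}{\gamma}$ gives $C^{\diamond}_1 \leq \frac{\E D}{\gamma N}$, and likewise $C^{\diamond}_N \leq \sum_{i=1}^N C^{\diamond}_i \leq \frac{\E D}{\gamma}$ is the second inequality. So the whole task reduces to bounding the aggregate equilibrium capacity.

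To obtain the sum bound, I would first invoke individual rationality in the capacity game. Any producer $i$ can unilaterally deviate to $C_i = 0$, which yields zero revenue from the pricing sub-game and zero investment cost, and hence zero payoff. Therefore, at any Nash equilibrium $(C^{\diamond}_1,\ldots,C^{\diamond}_N)$ of \eqref{eq:cap_game_nash}, each player must earn nonnegative expected profit: $\pi_i(\bm{C}^{\diamond}) \geq \gamma C^{\diamond}_i$ for every $i$. Summing across players yields $\gamma \sum_{i=1}^N C^{\diamond}_i \leq \sum_{i=1}^N \pi_i(\bm{C}^{\diamond})$.

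The second step, which is the main obstacle, is to show that the aggregate revenue collected by the producers in the pricing sub-game satisfies $\sum_{i=1}^N \pi_i(\bm{C}^{\diamond}) \leq \E D$. The argument relies on assumption A2: since consumers can always purchase from the main grid at unit price, no producer will post a price above $1$ on the support of its equilibrium mixed strategy (otherwise it sells nothing and could strictly improve by undercutting $1$). For any realization of the random prices $p_i$ and generations $Z_i$, letting $q_i$ denote the quantity sold by producer $i$, the total sales satisfy $\sum_i q_i \leq D$, so the realized total payment is at most $\max_i p_i \cdot \sum_i q_i \leq D$. Taking expectations gives $\sum_{i=1}^N \pi_i(\bm{C}^{\diamond}) \leq \E D$, and combining with the previous inequality yields $\sum_{i=1}^N C^{\diamond}_i \leq \frac{\E D}{\gamma}$, completing the proof.

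The delicate point is the revenue-bounding step: it requires carefully invoking A2 together with the structure of the pricing sub-game equilibrium (that all equilibrium bids lie in $[0,1]$) to argue a bound on the total expected payment made by consumers. Once that is in hand, everything else is individual rationality plus the ordering $C^{\diamond}_1 \leq \cdots \leq C^{\diamond}_N$.
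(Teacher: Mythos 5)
Your proof is correct, and it takes a genuinely different route from the paper's. The paper argues by contradiction through first-order conditions: it assumes $C^{\diamond}_1 > \frac{\E D}{\gamma N}$, compares the equilibrium optimality condition $\E[\mathbbm{1}\{(D-\sum_{i\neq N}C^{\diamond}_iZ_i)^+\geq C^{\diamond}_N Z_N\}Z_N]=\gamma$ against the same condition at the symmetric equilibrium $(C^*,\ldots,C^*)$ with $C^*\leq \frac{\E D}{\gamma N}$, and uses monotonicity of the indicator event in the capacities to derive $\gamma<\gamma$; the bound $C^{\diamond}_N\leq\frac{\E D}{\gamma}$ is obtained by a second, separate chain of inequalities. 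Your argument instead proves the single aggregate bound $\sum_i C^{\diamond}_i\leq\frac{\E D}{\gamma}$ from two global facts: individual rationality (deviating to $C_i=0$ yields zero payoff, so $\pi_i(\bm{C}^\diamond)\geq\gamma C^{\diamond}_i$ at equilibrium) and a consumer-budget bound ($\sum_i\pi_i\leq\E D$, since every realized sale happens at a price at most $1$ by A2 and total quantity sold is at most $D$; note this holds even for bids above $1$, which simply sell nothing). Both claimed inequalities then follow from the ordering $C^{\diamond}_1\leq\cdots\leq C^{\diamond}_N$. Your version buys several things: it avoids differentiating the payment function and does not lean on the existence or characterization of the symmetric equilibrium (Theorems~\ref{th_symm2} and~\ref{th:social_nash}), it does not use the i.i.d.\ assumption, it yields a strictly stronger conclusion (the sum bound), and it transfers immediately to the asymmetric-cost setting with $\gamma_{\min}$ in place of $\gamma$ (cf.\ Lemma~\ref{lem_capupperbound_asy}). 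The paper's FOC machinery, by contrast, is reused elsewhere in the efficiency proofs, which is presumably why the authors prove the lemma that way.
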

\begin{proof}
	Assume by contradiction that the first part of the above statement is not true and there exists an equilibrium solution $(C^{\diamond}_1, C^{\diamond}_2, \ldots, C^{\diamond}_N)$ such that $\frac{\E D}{\gamma N} < C^{\diamond}_1 \leq \ldots \leq C^{\diamond}_N$. Recall the formula for the payment received by producer $N$, i.e., \\$\pi_N(C^{\diamond}_1, C^{\diamond}_2, \ldots, C^{\diamond}_N) = E[(D-\sum_{i\neq N} C^{\diamond}_i Z_i)^+, C^{\diamond}_NZ_N)]$. Since this is an equilibrium solution, the derivative of this payment must equal the investment cost $\gamma$. More specifically, using the expression for the derivative that was previously derived in Equation~\eqref{eqn:gamma}, we have that

	\begin{align*}
	\left(\frac{d}{dC}\pi_N(C^{\diamond}_1, C^{\diamond}_2, \ldots, C)\right)_{C = C^{\diamond}_N} & = \gamma \\
	\implies \E \left[\mathbbm{1}\left\{(D-\sum_{i \neq N}C^{\diamond}_iZ_i)^+\geq C^{\diamond}_N Z_N\right\} Z_N \right] & = \gamma.
	\end{align*}

	Now, since the symmetric equilibrium solution $(C^*, \ldots, C^*)$ also satisfies this condition, we have that:
	$$\E \left[\mathbbm{1}\left\{(D-\sum_{i \neq N}C^*Z_i)^+\geq C^* Z_N\right\} Z_N \right]  = \gamma.$$

	Further, recall that in the symmetric equilibrium $C^* \leq \frac{\E D}{\gamma N}$ as derived in Equation~\eqref{eqn_symmetricupper}. Since $C^{\diamond}_1 > \frac{\E D}{\gamma N}$, this implies that $C^{\diamond}_1 > C^*$. Finally, let $\mathcal{E}$ denote the set of events\footnote{For our purposes, an event is a tuple of instantiations of the i.i.d random variables $(Z_1, Z_2, \ldots, Z_N)$ and $D$ satisfying the required condition} $\mathbbm{1}\left\{(D-\sum_{i \neq N}C^{\diamond}_iZ_i)^+\geq C^{\diamond}_N Z_N\right\}$ and let $\mathcal{E}^*$ denote the events satisfying\\$\mathbbm{1}\left\{(D-\sum_{i \neq N}C^*Z_i)^+\geq C^* Z_N\right\}$. Since $C^* < C^{\diamond}_1 \leq C^{\diamond}_2 \leq \ldots \leq C^{\diamond}_N$, it is not hard to deduce that $\mathcal{E} \subset \mathcal{E}^*$. {Indeed for  any (non-zero) instantiation $(Z_1, \ldots, Z_N)$ when $(D-\sum_{i \neq N}C^*Z_i)^+ > 0$,  we have that $(D-\sum_{i \neq N}C^{\diamond}_iZ_i)^+ < (D-\sum_{i \neq N}C^*Z_i)^+$ and $C^{\diamond}_NZ_N > C^*Z_N$.} Therefore, we get that:

	\begin{align*}
	&\E \left[\mathbbm{1}\left\{(D-\sum_{i \neq N}C^{\diamond}_iZ_i)^+\geq C^{\diamond}_N Z_N\right\} Z_N \right]  < \\ & \quad \E \left[\mathbbm{1}\left\{(D-\sum_{i \neq N}C^*Z_i)^+\geq C^* Z_N\right\} Z_N \right] = \gamma ,
	\end{align*}

	which is a contradiction.

	Next, we prove that at equilibrium $C^{\diamond}_N \leq \frac{\E D}{\gamma}$. The proof is somewhat similar and once again, proceeds by contradiction. Suppose that $C^{\diamond}_N > \frac{\E D}{\gamma}$. {Now, we have:
	\begin{align*}
	\gamma & = \E \left[\mathbbm{1}\left\{(D-\sum_{i \neq N}C^{\diamond}_iZ_i)^+\geq C^{\diamond}_N Z_N\right\} Z_N \right] \\
	& \leq \E \left[\mathbbm{1}\left\{D \geq C^{\diamond}_N Z_N\right\} Z_N \right] \\
	& < \E \left[\mathbbm{1}\left\{D \geq \frac{\E D}{\gamma} Z_N\right\} Z_N \right]  \\
	& = \E \left[\mathbbm{1}\left\{\gamma \geq \frac{\E D}{D}Z_N \right\} Z_N \right]  \\
	& \leq  \E \left[\mathbbm{1}\left\{\gamma \geq \frac{\E D}{D}Z_N \right\} \frac{ D}{\E D}\gamma \right]  \\
	& = \frac{1}{\E D} \gamma \E \{ \mathbbm{1}\left\{ \gamma \geq \frac{\E D}{D}Z_N\right\} D \}\\
	&  \leq \gamma,
	\end{align*}
}
	which is an obvious contradiction. \end{proof}

\begin{lemma}
	\label{lem_expectationgames}
	Suppose that $ C_i \leq C_j \leq \frac{\E D}{k}$ for some $k \leq 1$. Then, we have that:

	$$\E[\min(Z_j, \frac{D}{C_j})] \leq \E[\min(Z_i, \frac{D}{C_i})].$$
	$$ \E[\min(Z_j, \frac{D}{C_j})] \geq q k \E[Z_i].$$
\end{lemma}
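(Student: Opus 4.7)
The plan is to handle the two inequalities in Lemma~\ref{lem_expectationgames} separately, exploiting (i) that the $Z_i$'s share a common marginal distribution (by assumption A1, or by A3 via the additive decomposition), (ii) that each $Z_i$ is independent of the demand $D$, and (iii) that $Z_i \in [0,1]$ almost surely. I expect the constant $q$ in the second inequality to be $q = D_{min}/D_{max}$, which is bounded away from zero by assumption B1.

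For the first inequality, I would first note that the map $a \mapsto \min(Z, a)$ is nondecreasing in $a$, so since $C_i \leq C_j$ gives $D/C_i \geq D/C_j$ pointwise, one has $\min(Z_j, D/C_j) \leq \min(Z_j, D/C_i)$ almost surely. Taking expectations and then using that $Z_i$ and $Z_j$ have the same marginal law (and both are independent of $D$) yields $\E[\min(Z_j, D/C_i)] = \E[\min(Z_i, D/C_i)]$, which closes the first bound. The whole argument is essentially one line of pointwise monotonicity plus an exchange of labels under identical marginals.

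For the second inequality I would use the elementary fact that when $0 \leq Z_j \leq 1$ one has $\min(Z_j, D/C_j) \geq Z_j \cdot \min(1, D/C_j)$. Since $Z_j$ is independent of $D$, taking expectations factors as
\begin{equation*}
\E\bigl[\min(Z_j, D/C_j)\bigr] \;\geq\; \E[Z_j]\, \E\bigl[\min(1, D/C_j)\bigr].
\end{equation*}
Now I would bound $\min(1, D/C_j)$ from below almost surely using the assumptions: since $D \geq D_{min}$ and $C_j \leq \E D / k \leq D_{max}/k$, we get $D/C_j \geq k D_{min}/D_{max}$, and because $k \leq 1$ this quantity is at most $1$, so $\min(1, D/C_j) \geq k D_{min}/D_{max} = kq$ a.s.\ Combined with $\E[Z_j] = \E[Z_i]$ from identical marginals, this yields the desired bound $\E[\min(Z_j, D/C_j)] \geq qk\, \E[Z_i]$.

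The only mild subtlety — and the place most likely to trip up a careful reader — is the second step, where one has to be explicit about the two inequalities (a) $\min(Z_j, D/C_j) \geq Z_j \min(1, D/C_j)$, which uses $Z_j \leq 1$, and (b) the a.s.\ lower bound $D/C_j \geq kD_{min}/D_{max}$, which simultaneously uses $D \geq D_{min}$ and $C_j \leq \E D/k \leq D_{max}/k$. Neither requires any of the heavier probabilistic machinery (e.g.\ Berry--Esseen) that will appear later in the proofs of Theorems~\ref{maintheorem} and~\ref{theorem:correlation}; this lemma is a purely elementary monotonicity and scaling statement that will be used as a building block.
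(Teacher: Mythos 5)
Your proof is correct, and the first inequality is argued exactly as in the paper: pointwise monotonicity of $a \mapsto \min(Z_j,a)$ in $a$, followed by relabeling $Z_j$ as $Z_i$ using the identical marginals. For the second inequality you take a slightly different route. The paper conditions on the event $\{D \geq \E D\}$: on that event $D/C_j \geq \E D/C_j \geq k$, so $\min(Z_j, D/C_j) \geq \min(Z_j,k) \geq k Z_j$, and the constant $q$ emerges as $\Pr(D \geq \E D)$ (left implicit in the paper, and in principle it could be small even under assumption B1). You instead use the almost-sure bounds $D \geq D_{min}$ and $C_j \leq \E D/k \leq D_{max}/k$ to get the deterministic lower bound $D/C_j \geq k D_{min}/D_{max} \leq 1$, together with the elementary inequality $\min(Z_j, D/C_j) \geq Z_j \min(1, D/C_j)$ valid for $Z_j \in [0,1]$. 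This yields the same conclusion with $q = D_{min}/D_{max}$, which is explicitly bounded away from zero by B1 and matches the factor $\frac{D_{min}}{D_{max}}$ appearing in the hypotheses of Theorems~\ref{maintheorem} and~\ref{theorem:correlation_asymmetric}; in that sense your version is arguably the cleaner instantiation of the constant, at the cost of one extra elementary inequality. Both arguments are purely pointwise and equally elementary, so nothing substantive is gained or lost either way.
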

\begin{proof}
	The first part is easy to see. For any instantiation of $Z_j$ and $D$, we have that $\min(Z_j, D/C_j) \leq \min(Z_j, D/C_i)$ since $C_i \leq C_j$. Taking the expectation, and changing the variable from $Z_j$ to $Z_i$ (these variables have the same marginal distribution due to either i.i.d assumption, or follows \eqref{eq:decomposition}), we get that:

	$$ \E[\min(Z_j, \frac{D}{C_j})] \leq \E[\min(Z_j, \frac{D}{C_i})] = \E[\min(Z_i, \frac{D}{C_i})].$$
	The second part of the lemma can be proved as follows: once again fix any instantiation of $Z_j$, we have that:
	{
	\begin{equation}
	\begin{aligned}
	\E_{D|Z_j}\min(Z_j, \frac{D}{C_j}) & \geq E_{DD|Z_j} \mathbbm{1}\{D \geq \E D\} \min(Z_j, \frac{D}{C_j})\\
	& \geq q \min(Z_j, k) \\
	& \geq q k\min(Z_j,1) \\
	& = q kZ_j
	\end{aligned}
	\end{equation}
}

	Taking the expectation, we get the required result. \end{proof}

Last, we present a lemma on the bound for integrating on a standard Gaussian distribution.
\begin{lemma}\label{lemma_gaussian_diff}
	Let $\Phi(\cdot)$ denote the CDF for standard Gaussian distribution, i.e., zero mean and unit variance. Then:
	\begin{align}
	\Phi(x) - \Phi(y) \leq \frac{1}{\sqrt{2\pi}}(x - y), x>y.
	\end{align}
\end{lemma}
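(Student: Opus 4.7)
The plan is to reduce this bound to a routine calculus estimate on the Gaussian density. By the fundamental theorem of calculus, I would rewrite the difference of CDF values as
\[
\Phi(x) - \Phi(y) = \int_y^x \phi(t)\, dt,
\]
where $\phi(t) = \frac{1}{\sqrt{2\pi}} e^{-t^2/2}$ is the standard normal density.

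Next, I would use the elementary observation that $e^{-t^2/2} \leq 1$ for every $t \in \mathbb{R}$, so that $\phi(t) \leq \frac{1}{\sqrt{2\pi}}$ uniformly. Substituting this pointwise bound into the integrand and using $x > y$ gives
\[
\int_y^x \phi(t)\, dt \;\leq\; \int_y^x \frac{1}{\sqrt{2\pi}}\, dt \;=\; \frac{1}{\sqrt{2\pi}}(x-y),
\]
which is exactly the stated inequality.

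There is no real obstacle here: the argument is essentially a one-line application of the fact that the standard Gaussian density attains its maximum $\frac{1}{\sqrt{2\pi}}$ at $t=0$. Equivalently, one can invoke the Mean Value Theorem on $\Phi$ to produce some $\xi \in (y,x)$ with $\Phi(x) - \Phi(y) = \phi(\xi)(x-y)$ and then bound $\phi(\xi) \leq \frac{1}{\sqrt{2\pi}}$; either formulation yields the lemma immediately.
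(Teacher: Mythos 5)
Your proof is correct and follows the same route as the paper: both arguments bound the standard Gaussian density by its maximum value $\frac{1}{\sqrt{2\pi}}$ and integrate (the paper simply states this as a direct observation, while you spell out the fundamental-theorem-of-calculus step). No issues.
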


Lemma \ref{lemma_gaussian_diff} is a direct observation based on the density function $f(x)$ of standard Gaussian random variable, i.e., $f(x) = \frac{1}{\sqrt{2\pi}} e^{-x^2}$ which has a maximum value of $\frac{1}{\sqrt{2\pi}}$.

\subsection{Proof for Theorem \ref{theorem:correlation} and  Theorem \ref{maintheorem}}\label{subsec:proof_maintheorem}

Now we proceed to prove Theorem \ref{theorem:correlation} and Theorem \ref{maintheorem}. Note that Theorem \ref{maintheorem} is a special case of \ref{theorem:correlation} when $\bar{Z} = 0$, in the following proof, we assume that $Z_i = \bar{Z} + \hat{Z}_i$ as in \eqref{eq:decomposition}.

To avoid lengthy notation, let us define $G_i = \E[\min(Z_i, D/{C}^{\diamond}_i)]$ for all $1 \leq i \leq N$. Consider the payment received by the producer with the smallest investment, which happens to be ${C}^{\diamond}_1$. As per Equation~\eqref{eqn_paymentasymmetric}, this equals:
\begin{equation}
	\begin{aligned}
	\pi_1({C}^{\diamond}_1, \ldots, {C}^{\diamond}_N) & = \pi_N({C}^{\diamond}_1,  \ldots, {C}^{\diamond}_N) \frac{{C}^{\diamond}_1 G_1 }{{C}^{\diamond}_N G_N }\\
	& = \pi_N({C}^{\diamond}_1,  \ldots, {C}^{\diamond}_N) \frac{{C}^{\diamond}_1 E[Z_1] }{{C}^{\diamond}_N G_N }.
	\end{aligned}
\end{equation}

{The second equation comes from our assumption that $N > \frac{1}{\frac{D_{min}}{D_{max}}\gamma}$}. Therefore by Lemma~\ref{lem_capupperbound}, we have that $C_1 \leq D$ and $G_1 = \E[\min(Z_1, 1)] = \E[Z_1]$. In what follows, we will continue to use $G_1 = \E[Z_1]$ for consistency but remark that $G_1$ is a constant that is independent of ${C}^{\diamond}_1$. The total profit made by this producer is $\pi_1({C}^{\diamond}_1, \ldots, {C}^{\diamond}_N) - \gamma {C}^{\diamond}_1$. Since this is an equilibrium solution, we have that $\left(\frac{d}{dC}\pi_1(C, {C}^{\diamond}_2, \ldots, {C}^{\diamond}_N)\right)_{C={C}^{\diamond}_1} = \gamma$. Expanding the differentiation term, we get that:
\begin{align}
& \frac{G_1}{{C}^{\diamond}_NG_N} \Bigg(\pi_N({C}^{\diamond}_1,  \ldots, {C}^{\diamond}_N) \nonumber\\
& \quad  - {C}^{\diamond}_1 \E[  \mathbbm{1} \left\{ 0 \leq D-\sum_{i \neq N}{C}^{\diamond}_iZ_i \leq {C}^{\diamond}_NZ_N  \right\}  Z_1\Bigg) = \gamma \label{eqn_derivativeprofit1}.
\end{align}

In the above equation, we used the fact that: $\frac{d}{dC} \pi_N(C,  \ldots, {C}^{\diamond}_N) = \E[ \mathbbm{1} \left\{ 0 \leq D-CZ_1 +\sum_{i=2}^{N-1}{C}^{\diamond}_iZ_i \leq {C}^{\diamond}_NZ_N  \right\} - Z_1 ]$. Rearranging Equation~\eqref{eqn_derivativeprofit1}, we get an upper bound for the payment made to producer $N$, namely
\begin{equation}
	\begin{aligned}
	\pi_N({C}^{\diamond}_1,  \ldots, {C}^{\diamond}_N) & = \gamma \frac{{C}^{\diamond}_N G_N}{G_1}  \\
	& + {C}^{\diamond}_1 \E[ \mathbbm{1} \{ 0 \leq D-\sum_{i \neq N}{C}^{\diamond}_iZ_i \leq {C}^{\diamond}_NZ_N  \}  Z_1]  \label{eqn_derivativeprofit2}.
	\end{aligned}
\end{equation}

Fix some constant $\kappa$. The rest of the proof proceeds in two cases:

(Case I: $\sum_{i=1}^{N-1} \frac{{C}^{\diamond}_i}{{C}^{\diamond}_N} \leq \kappa N^{\frac{3}{4}}$)

Intuitively, this refers to the case where the investments are rather asymmetric---i.e., the investment by the `larger producers' is significantly bigger than that by the `smaller producers'. Note that in Equation~\eqref{eqn_derivativeprofit2}, $Z_1 \leq 1$. Therefore, we get:
\begin{align*}
\pi_N({C}^{\diamond}_1,  \ldots, {C}^{\diamond}_N) & \leq \gamma \frac{{C}^{\diamond}_N G_N}{G_1} + {C}^{\diamond}_1 \E[ \mathbbm{1} \{ 0 \leq D-\sum_{i \neq N}{C}^{\diamond}_iZ_i \leq {C}^{\diamond}_NZ_N  \} ] \\
& = \gamma \frac{{C}^{\diamond}_N G_N}{G_1} + {C}^{\diamond}_1 Pr\left(  0 \leq D-\sum_{i \neq N}{C}^{\diamond}_iZ_i \leq {C}^{\diamond}_NZ_N  \right) \\
& \leq \gamma \frac{{C}^{\diamond}_N G_N}{G_1} + {C}^{\diamond}_1 .
\end{align*}

Recall from Lemma~\ref{lem_capupperbound} that in any equilibrium solution, we must have that ${C}^{\diamond}_1 \leq \frac{\E D}{\gamma N}$. Substituting this above, we get that
$$\pi_N({C}^{\diamond}_1,  \ldots, {C}^{\diamond}_N)  \leq \gamma \frac{{C}^{\diamond}_N G_N}{G_1}+ \frac{\E D}{\gamma N}.$$

Next, observe that for any $i \neq N$, we can apply 
Proposition \ref{prop:expected_profit} to obtain an upper bound on its profit, namely that:
\begin{align}
\pi_i({C}^{\diamond}_1,  \ldots, {C}^{\diamond}_N)  & \leq \pi_N({C}^{\diamond}_1,  \ldots, {C}^{\diamond}_N) \frac{{C}^{\diamond}_i G_i}{{C}^{\diamond}_N G_N} \nonumber \\
& \leq \left(\gamma \frac{{C}^{\diamond}_N G_N}{G_1} + \frac{\E D}{\gamma N} \right) \frac{{C}^{\diamond}_i G_i}{{C}^{\diamond}_N G_N}  \nonumber\\
& = \gamma \frac{{C}^{\diamond}_i G_i}{G_1}  + \frac{\E D {C}^{\diamond}_i G_i  }{ \gamma N{C}^{\diamond}_N G_N} \nonumber\\
& \leq \gamma {C}^{\diamond}_i  + \frac{\E D {C}^{\diamond}_i \E[Z_i]  }{ q \gamma N {C}^{\diamond}_N \gamma \E[Z_N]} \label{eqn_profitupperboundall2}\\
& = \gamma {C}^{\diamond}_i  + \frac{\E D {C}^{\diamond}_i }{ q \gamma^2 N{C}^{\diamond}_N }  \label{eqn_profitupperboundall}.
\end{align}

Equations~\eqref{eqn_profitupperboundall2} and~\eqref{eqn_profitupperboundall} were derived using Lemma~\ref{lem_expectationgames}, namely we used the simple properties that $(i)$ $G_i \leq G_1$, $(ii)$ $G_i \leq \E[Z_i]$, and $(iii)$ $G_N \geq q \gamma \E[Z_N]$, and finally the fact that $\E[Z_i] = \E[Z_N]$.

Summing up \eqref{eqn_profitupperboundall} over all $i$ including $i=N$, we get that
$$\sum_{i=1}^N \pi_i({C}^{\diamond}_1,  \ldots, {C}^{\diamond}_N) \leq \gamma \sum_{i=1}^N {C}^{\diamond}_i + \frac{\E D}{N q \gamma^2} \left(\sum_{i=1}^{N}\frac{{C}^{\diamond}_i}{{C}^{\diamond}_N}\right). $$

Of course, as per our assumption, we have that $\sum_{i=1}^{N-1}\frac{{C}^{\diamond}_i}{{C}^{\diamond}_N} \leq \kappa N^{\frac{3}{4}}.$ Substituting this above, we get that
\begin{align*}
\sum_{i=1}^N \pi_i({C}^{\diamond}_1,  \ldots, {C}^{\diamond}_N) & \leq \gamma \sum_{i=1}^N {C}^{\diamond}_i + \frac{\E D}{Nq\gamma^2} \left(\kappa N^{\frac{3}{4}} + 1)\right).\\
& \leq \gamma \sum_{i=1}^N {C}^{\diamond}_i + \frac{\E D}{q\gamma^2} \left(\kappa N^{-\frac{1}{4}} + \frac{1}{N}\right).
\end{align*}
This proves the theorem statement for the case where $\sum_{i=1}^{N-1} \frac{{C}^{\diamond}_i}{{C}^{\diamond}_N} \leq \kappa N^{\frac{3}{4}}$. Note that the $\frac{1}{N}$ term can be incorporated into the constant $\alpha$, without affecting any of the asymptotic bounds.

(Case II: $\sum_{i=1}^{N-1} \frac{{C}^{\diamond}_i}{{C}^{\diamond}_N} > \kappa N^{\frac{3}{4}}$)

Let us go back to Equation~\eqref{eqn_derivativeprofit1} and consider the term \\$B = \E[ \mathbbm{1} \left\{ 0 \leq D-\sum_{i \neq N}{C}^{\diamond}_iZ_i \leq {C}^{\diamond}_NZ_N  \right\}  Z_1]$. We will now obtain a tighter upper bound on this quantity conditional upon $\sum_{i=1}^{N-1} \frac{{C}^{\diamond}_i}{{C}^{\diamond}_N} > \kappa N^{\frac{3}{4}}$. First note that applying the Cauchy-Schwarz inequality, we can get a lower bound on the sum-of-squares, i.e.,
\begin{equation}
\label{eqn_cslower}
\sum_{i=1}^{N-1} (\frac{{C}^{\diamond}_i}{{C}^{\diamond}_N})^2 \geq \frac{1}{N-1}(\sum_{i=1}^{N-1} {C}^{\diamond}_i/{C}^{\diamond}_N)^2 \geq \frac{\kappa^2 N^{3/2}}{N-1} \geq 2\kappa^2 N^{\frac{1}{2}}.
\end{equation}
The final simplification comes from the fact that $N \geq 2$ (since we assumed $N > \frac{1}{\gamma} \geq 1$). Next, we have that:
{
\begin{align*}
B & \leq \E[ \mathbbm{1} \left\{ 0 \leq D-\sum_{i \neq N}{C}^{\diamond}_iZ_i \leq {C}^{\diamond}_NZ_N  \right\}] \\
& = Pr_{\cdot|D}\left( D - {C}^{\diamond}_N Z_N \leq \sum_{i \neq N}{C}^{\diamond}_iZ_i \leq D  \right) \\
& = \E_D\Pr_{\cdot|D}\left( \frac{D}{{C}^{\diamond}_N} - Z_N \leq \sum_{i \neq N}\frac{{C}^{\diamond}_i}{{C}^{\diamond}_N}Z_i \leq \frac{D}{{C}^{\diamond}_N}  \right)\\
& \leq \E_D \Pr_{\cdot|D}\left( \frac{D}{{C}^{\diamond}_N} - 1 \leq \sum_{i \neq N}\frac{{C}^{\diamond}_i}{{C}^{\diamond}_N}Z_i \leq \frac{D}{{C}^{\diamond}_N}  \right).
\end{align*}
}



Using the fact that $Z_i = \hat{Z}_i + \bar{Z}$, we can write out the probability more explicitly:
{
\begin{equation}\label{eq:simplified}
\begin{aligned}
&  \E_D\Pr_{\cdot|D}\left( \frac{D}{{C}^{\diamond}_N} - 1 \leq \sum_{i \neq N}\frac{{C}^{\diamond}_i}{{C}^{\diamond}_N}Z_i \leq \frac{D}{{C}^{\diamond}_N}  \right)  \\
& =  \E_D\Pr_{\cdot|D}\left( \frac{D}{{C}^{\diamond}_N} - 1 \leq \sum_{i \neq N}\frac{{C}^{\diamond}_i}{{C}^{\diamond}_N}(\hat{Z}_i + \bar{Z}) \leq \frac{D}{{C}^{\diamond}_N}  \right).  \\
\end{aligned}
\end{equation}
}

Let us denote $Z_0 =\sum_{i \neq N}\frac{{C}^{\diamond}_i}{{C}^{\diamond}_N}  \bar{Z}$ and $Z' = \sum_{i \neq N}\frac{{C}^{\diamond}_i}{{C}^{\diamond}_N}\hat{Z}_i $. From the assumption A3 along with \eqref{eq:decomposition} we know that $\bar{Z}$ is independent of $\hat{Z}_i$, therefore $Z_0$ is independent of $Z'$.  
The probability in \eqref{eq:simplified} can be written as an integral over the possible values for $Z_0$ with $f_{Z_0}(\cdot)$ being the probability density function for the random variable $Z_0$. So, we have:
{
\begin{equation}\label{eq:correlated}
\begin{aligned}
& \E_D\Pr_{\cdot|D}\left( \frac{D}{{C}^{\diamond}_N} - 1 \leq \sum_{i \neq N}\frac{{C}^{\diamond}_i}{{C}^{\diamond}_N}Z_i \leq \frac{D}{{C}^{\diamond}_N}  \right)  \\
& = \E_D\Pr_{\cdot|D}\left( \frac{D}{{C}^{\diamond}_N} - 1 \leq Z_0 + Z' \leq  \frac{D}{{C}^{\diamond}_N}\right) \\
& \stackrel{(a)}{=} \E_D\int_{0}^{\frac{D}{{C}^{\diamond}_N}} f_{Z_0}(z)Pr\left(\frac{D}{{C}^{\diamond}_N} - 1 \leq Z_0 + Z' \leq \frac{D}{{C}^{\diamond}_N}| Z_0 = z \right) dz \\
& \stackrel{(b)}{=} \E_D\int_{0}^{\frac{D}{{C}^{\diamond}_N}} f_{Z_0}(z)Pr\{ \frac{D}{{C}^{\diamond}_N} - 1 - z  \leq Z' \leq \frac{D}{{C}^{\diamond}_N}-z \}dz, \\
\end{aligned}
\end{equation}
}
where $(a)$ uses the property of conditional probability, and $(b)$ is based on the fact that $Z_0$ and $Z'$ are independent.



Since $Z' = \sum_{i \neq N}\frac{{C}^{\diamond}_i}{{C}^{\diamond}_N}\hat{Z}_i$, where $\hat{Z}_i$'s are i.i.d. random variables and $\E \hat{Z}_i = \hat{\mu}$, $\E (\hat{Z}_i - \E\hat{Z}_i)^2 = \hat{\sigma}^2 > 0$, $\E |\hat{Z}_i - \E \hat{Z}_i|^3 = \hat{\rho} < \infty$, this variable has a mean $\mu' = \hat{\mu} \sum_{i=1}^{N-1}{C}^{\diamond}_i/{C}^{\diamond}_N \geq \hat{\mu} \kappa N^{\frac{3}{4}}$. Similarly, the variance of $Z'$ can be written as $(\sigma')^2 = \hat{\sigma}^2 \sum_{i=1}^{N-1}(\frac{{C}^{\diamond}_i}{{C}^{\diamond}_N})^2$. Now, applying Equation~\eqref{eqn_cslower}, we get the following lower bound for variance:
\begin{equation}\label{eq_sigma_prime}
(\sigma')^2 \geq 2 \hat{\sigma}^2 \kappa^2 N^{\frac{1}{2}}.
\end{equation}

Note that $\frac{{C}^{\diamond}_i}{{C}^{\diamond}_N}\hat{Z}_i$ are independent random variables because $\hat{Z}_i$'s are i.i.d.. What is more, we know that $\frac{{C}^{\diamond}_i}{{C}^{\diamond}_N}\hat{Z}_i$ has mean $\bar{\mu}_i = \frac{{C}^{\diamond}_i}{{C}^{\diamond}_N} \hat{\mu}$, non negative variance $\bar{\sigma}_i^2 = (\frac{{C}^{\diamond}_i}{{C}^{\diamond}_N})^2 \hat{\sigma}^2$ and finite centered third moment $\bar{\rho}_i = (\frac{{C}^{\diamond}_i}{{C}^{\diamond}_N})^3 \hat{\rho}$. Denote $S_N = \frac{Z' - \sum_i \bar{\mu}_i}{\sqrt{\sum_i \bar{\sigma}_i^2}}$ and let $F_N$ denote the CDF of $S_N$. We rewrite the probability of interest as the following:
\begin{equation}\label{eq:proofberry}
\begin{aligned}
&  \Pr_{\cdot|D}\left( \frac{D}{{C}^{\diamond}_N} - 1 - z\leq \sum_{i \neq N}\frac{{C}^{\diamond}_i}{{C}^{\diamond}_N}\hat{Z}_i \leq \frac{D}{{C}^{\diamond}_N} -z \right) \\
= &  \Pr_{\cdot|D}\left( \frac{D}{{C}^{\diamond}_N} - 1 - z\leq Z' \leq \frac{D}{{C}^{\diamond}_N}  - z\right) \\
= &  \Pr_{\cdot|D}\left( \frac{\frac{D}{{C}^{\diamond}_N} - 1 - z - \sum_i \bar{\mu}_i}{\sqrt{\sum_i \bar{\sigma}_i^2}} \leq \frac{Z' - \sum_i \bar{\mu}_i}{\sqrt{\sum_i \bar{\sigma}_i^2}} \leq  \frac{\frac{D}{{C}^{\diamond}_N} - z \sum_i \bar{\mu}_i}{\sqrt{\sum_i \bar{\sigma}_i^2}} \right) \\
= &  \left\{F_N\left(\frac{\frac{D}{{C}^{\diamond}_N} - z - \sum_i \bar{\mu}_i}{\sqrt{\sum_i \bar{\sigma}_i^2}}\right) - F_N\left(\frac{\frac{D}{{C}^{\diamond}_N} - 1 - z - \sum_i \bar{\mu}_i}{\sqrt{\sum_i \bar{\sigma}_i^2}}\right) \right\}. \\
\end{aligned}
\end{equation}

We can now apply the Berry-Esseen Theorem from Lemma~\ref{lemma:berry}	 to get an upper bound:
\begin{equation}\label{eq:upperbound}
\begin{aligned}
& Pr_{\cdot|D}\left( \frac{D}{{C}^{\diamond}_N} - 1 - z\leq \sum_{i \neq N}\frac{{C}^{\diamond}_i}{{C}^{\diamond}_N}\hat{Z}_i \leq \frac{D}{{C}^{\diamond}_N} -z \right) \\
= & F_N\left(\frac{\frac{D}{{C}^{\diamond}_N} - z - \sum_i \bar{\mu}_i}{\sqrt{\sum_i \bar{\sigma}_i^2}}\right) - F_N\left(\frac{\frac{D}{{C}^{\diamond}_N} - 1 - z - \sum_i \bar{\mu}_i}{\sqrt{\sum_i \bar{\sigma}_i^2}}\right) \\
\stackrel{(a)}{\leq} & \Phi \left(\frac{\frac{D}{{C}^{\diamond}_N} - z - \sum_i \bar{\mu}_i}{\sqrt{\sum_i \bar{\sigma}_i^2}}\right) - \Phi \left(\frac{\frac{D}{{C}^{\diamond}_N} - 1- z - \sum_i \bar{\mu}_i}{\sqrt{\sum_i \bar{\sigma}_i^2}}\right) \\
+ & 2\alpha (\sum_i \bar{\sigma}_i^2)^{-\frac{1}{2}}\max_i \frac{\bar{\rho}_i}{\bar{\sigma}_i^2}\\
\stackrel{(b)}{\leq} & \Phi \left(\frac{\frac{D}{{C}^{\diamond}_N} - z - \sum_i \bar{\mu}_i}{\sqrt{\sum_i \bar{\sigma}_i^2}}\right) - \Phi \left(\frac{\frac{D}{{C}^{\diamond}_N} - 1- z - \sum_i \bar{\mu}_i}{\sqrt{\sum_i \bar{\sigma}_i^2}}\right) \\
+ & 2\alpha (2\bar{\sigma}^2\kappa^2N^{-\frac{1}{4}})\max_i(\frac{{C}^{\diamond}_i}{{C}^{\diamond}_N}  \frac{\bar{\rho}}{\bar{\sigma}^2}) \\
\stackrel{(c)}{\leq} & \frac{1}{\sqrt{2\pi}}\frac{1}{\sqrt{\sum_i \bar{\sigma}_i^2}} + 2\alpha (2\bar{\sigma}^2\kappa^2N^{-\frac{1}{4}})(\frac{\bar{\rho}}{\bar{\sigma}^2}) \\
\stackrel{(d)}{\leq} & \kappa' N^{-\frac{1}{4}},
\end{aligned}
\end{equation}
where $\Phi(\cdot)$ is the CDF for standard Gaussian distribution. Inequality $(a)$ applies Berry Esseen Theorem to $F_N(x)$ at $x = \frac{\frac{D}{{C}^{\diamond}_N} - z - \sum_i \bar{\mu}_i}{\sqrt{\sum_i \bar{\sigma}_i^2}}$ and $x = \frac{\frac{D}{{C}^{\diamond}_N} - 1- z -\sum_i \bar{\mu}_i}{\sqrt{\sum_i \bar{\sigma}_i^2}}$. Inequality $(b)$ is based on \eqref{eq_sigma_prime}. Inequality $(c)$ is based on Lemma \ref{lemma_gaussian_diff}. Inequality $(c)$ also depends on the fact that $\frac{{C}^{\diamond}_i}{{C}^{\diamond}_N}<1, \forall i$. Lastly, $(d)$ uses the fact that $\frac{1}{\sum_i \bar{\sigma}_i^2} \leq \frac{1}{\sqrt{2\sigma^2\kappa^2N^{\frac{1}{2}}}}$ from \eqref{eq_sigma_prime}, and rewrites the constants into $\kappa'$ for brevity.

Plugging the above upper bound back to \eqref{eq:correlated}, we get that:
\begin{equation}
\begin{aligned}
& Pr\left( \frac{D}{{C}^{\diamond}_N} - 1 \leq \sum_{i \neq N}\frac{{C}^{\diamond}_i}{{C}^{\diamond}_N}Z_i \leq \frac{D}{{C}^{\diamond}_N}  \right) \\
\leq &\kappa' N^{-\frac{1}{4}}\int_{0}^{D/C^{\diamond_N}}f_{Z_0}(z)dz \\
\leq  & \kappa'N^{-\frac{1}{4}}.
\end{aligned}
\end{equation}

If $\bar{Z} = 0$, then $Z_i$'s are i.i.d. random variables, then $Z_0 = 0$ and the bound naturally carries over, as stated in the following corollary.
\begin{corollary}
	If $Z_i$'s are i.i.d. random variables, then it is a special case of correlated $Z_i$'s in Assumption A3 when $\bar{Z}$, and the upper bound in \eqref{eq:upperbound} is valid for i.i.d. $Z_i$'s, i.e.:
	\begin{equation}
	Pr_{\cdot|D}\left( \frac{D}{{C}^{\diamond}_N} - 1 \leq \sum_{i \neq N}\frac{{C}^{\diamond}_i}{{C}^{\diamond}_N}Z_i \leq \frac{D}{{C}^{\diamond}_N}  \right) \leq \kappa' N^{-\frac{1}{4}}.
	\end{equation}
\end{corollary}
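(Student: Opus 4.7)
The plan is to exhibit the i.i.d.\ setting as a degenerate instance of the additive correlation model $Z_i = \hat{Z}_i + \bar{Z}$ from assumption A3, specifically the case where the shared component $\bar{Z}$ is identically zero. Under this identification each i.i.d.\ $Z_i$ is literally its own individual component $\hat{Z}_i$, and the hypotheses needed in the Case II portion of the proof of Theorem~\ref{theorem:correlation} (independence of the $\hat{Z}_i$'s from $\bar{Z}$, bounded density, positive variance, finite third moment) reduce exactly to assumption A1 on the $Z_i$'s themselves.

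With this reduction in hand, I would revisit equations~\eqref{eq:correlated}--\eqref{eq:upperbound} in the Case II argument. Setting $\bar{Z} \equiv 0$ makes the auxiliary variable $Z_0 = \sum_{i \neq N}\frac{C^{\diamond}_i}{C^{\diamond}_N} \bar{Z}$ vanish, so the conditioning step in~\eqref{eq:correlated} collapses: the probability of interest becomes unconditionally
\[
\Pr_{\cdot|D}\!\left(\frac{D}{C^{\diamond}_N} - 1 \le Z' \le \frac{D}{C^{\diamond}_N}\right), \quad Z' = \sum_{i \ne N}\tfrac{C^{\diamond}_i}{C^{\diamond}_N} Z_i.
\]
Thus the integral against $f_{Z_0}$ is trivially bounded by~$1$, and the bound~\eqref{eq:upperbound} on $Z'$ alone suffices.

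Next I would check that the application of Lemma~\ref{lemma:berry} (Berry--Esseen) proceeds with no modification. The summands $\tfrac{C^{\diamond}_i}{C^{\diamond}_N} Z_i$ are independent with mean $\tfrac{C^{\diamond}_i}{C^{\diamond}_N}\mu$, variance $(\tfrac{C^{\diamond}_i}{C^{\diamond}_N})^2 \sigma^2$, and finite centered third moment $(\tfrac{C^{\diamond}_i}{C^{\diamond}_N})^3 \rho$. The Cauchy--Schwarz lower bound~\eqref{eqn_cslower} on $\sum_i (C^{\diamond}_i/C^{\diamond}_N)^2$ still holds in Case II, giving $(\sigma')^2 \ge 2\sigma^2 \kappa^2 N^{1/2}$. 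Combining Berry--Esseen with Lemma~\ref{lemma_gaussian_diff} then reproduces the bound $\kappa' N^{-1/4}$ exactly as in the correlated case.

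The main observation, which is really the only content of the proof, is that no step in Case~II of Theorem~\ref{theorem:correlation} genuinely required $\bar{Z}$ to be non-degenerate; the additive decomposition was used only to factor out the shared component via independence, and when that component is identically zero the argument merely simplifies. The one minor subtlety is that assumption A1 technically forbids $\bar{Z}$ from being a constant, so rather than literally invoking A3, I would point out that the entire derivation of~\eqref{eq:upperbound} applies verbatim with $Z_0$ removed. The corollary therefore follows without additional calculation.
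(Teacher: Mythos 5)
Your proposal is correct and follows essentially the same route as the paper, which simply observes that taking $\bar{Z}=0$ forces $Z_0=0$ so the Berry--Esseen bound in~\eqref{eq:upperbound} carries over verbatim. In fact you are more careful than the paper on the one genuine wrinkle --- that A3 formally requires $\bar{Z}$ to have positive variance, so the i.i.d.\ case is not literally an instance of A3 --- and your fix (rerun the Case~II derivation with the $Z_0$ integral removed rather than invoking A3) is exactly the right way to close that gap.
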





Now we can complete the proof. Going back to \eqref{eqn_derivativeprofit2}, and substituting the above upper bound to get that:
\begin{align}
\pi_N({C}^{\diamond}_1,  \ldots, {C}^{\diamond}_N)  & \leq \gamma \frac{{C}^{\diamond}_N G_N}{G_1} + {C}^{\diamond}_1 \kappa' N^{-\frac{1}{4}}  \nonumber\\
& \leq \gamma \frac{{C}^{\diamond}_N G_N}{G_1} + \frac{\E D}{\gamma N} \kappa' N^{-\frac{1}{4}}.
\end{align}

Next, we upper bound the aggregate payment made to the producers using 
Proposition \ref{prop:expected_profit}. Recall that $G_i \leq G_1$ for all $i$ and that $G_N \geq \gamma \E[Z_N]$  as per Lemma~\ref{lem_expectationgames}. We now get that:
\begin{align*}
& \sum_{i=1}^N \pi_i({C}^{\diamond}_1,  \ldots, {C}^{\diamond}_N)  \\
& \leq \sum_{i=1}^N \pi_N({C}^{\diamond}_1,  \ldots, {C}^{\diamond}_N) \frac{\bar{C_i} G_i}{{C}^{\diamond}_N G_N}\\
& \leq \sum_{i=1}^N \left(\gamma \frac{{C}^{\diamond}_N G_N}{G_1} + \frac{D G_i}{\gamma N G_N} \kappa' N^{-\frac{1}{4}} \right) \frac{\bar{C_i}G_i }{{C}^{\diamond}_NG_N}\\
& \leq \gamma \sum_{i=1}^N {C}^{\diamond}_i \frac{G_i}{G_1} + \frac{\E D}{q\gamma^2 N} \kappa' N^{-\frac{1}{4}} \sum_{i=1}^N \frac{\bar{C_i}}{{C}^{\diamond}_N} \\
& \leq \gamma \sum_{i=1}^N {C}^{\diamond}_i + \frac{\E D}{q\gamma^2 N} \kappa' N^{-\frac{1}{4}} \times N \\
& = \gamma \sum_{i=1}^N {C}^{\diamond}_i +  \frac{\E D}{q\gamma^2} \kappa' N^{-\frac{1}{4}}.
\end{align*}

In the penultimate equation, we used the fact that ${C}^{\diamond}_i \leq {C}^{\diamond}_N$ and so, trivially, $\sum_{i=1}^N \frac{{C}^{\diamond}_i}{{C}^{\diamond}_N} \leq N$. This completes the proof of the second case, and hence, the theorem. \hfill \qed

\section{Proof for Asymmetric Gamma}
Before stating the main result in this section, we extend Lemma~\ref{lem_capupperbound} to arbitrary investment costs to obtain an upper bound on $C^{\diamond}_1$.

\begin{lemma}
	\label{lem_capupperbound_asy}
	Given any equilibrium solution $(C^{\diamond}_1, C^{\diamond}_2, \ldots, C^{\diamond}_N)$ of the two-level game with asymmetric costs $(\gamma_1, \gamma_2 \ldots, \gamma_N)$, it must be the case that $C^{\diamond}_1 \leq \frac{\E D}{\gamma_{min} N}$.
\end{lemma}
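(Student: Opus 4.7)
The plan is to adapt the symmetric argument of Lemma~\ref{lem_capupperbound}, but since there is no symmetric equilibrium to compare against when the investment costs differ, I would replace the FOC-comparison trick with a direct aggregate-payment argument. The core idea is to upper bound the total equilibrium capacity $\sum_{i=1}^N C^{\diamond}_i$ and then exploit the ordering $C^{\diamond}_1 \leq C^{\diamond}_i$ to transfer this bound onto $C^{\diamond}_1$.

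First I would establish a lower bound on $\sum_i \pi_i(\bm{C}^{\diamond})$ using the individual-rationality condition that is built into any Nash equilibrium. If a producer's equilibrium capacity $C^{\diamond}_i$ yielded strictly negative profit, then unilaterally deviating to $C_i = 0$ would yield zero profit and would strictly improve its payoff, contradicting \eqref{eq:cap_game_nash}. Hence $\pi_i(\bm{C}^{\diamond}) \geq \gamma_i C^{\diamond}_i \geq \gamma_{\min} C^{\diamond}_i$ for every $i$, and summing over producers gives $\sum_i \pi_i(\bm{C}^{\diamond}) \geq \gamma_{\min} \sum_i C^{\diamond}_i$.

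Next I would produce the matching upper bound $\sum_i \pi_i(\bm{C}^{\diamond}) \leq \E D$. By assumption A2 together with the characterization of the pricing sub-game used in Proposition~\ref{prop:expected_profit}, every producer's bid lies in $[p,1]$, while the total quantity of renewable energy actually sold to consumers in any realization is at most the realized demand $D$. Since each unit sold is paid at a per-unit price of at most $1$, the aggregate payment is bounded by $D$ pathwise, and taking expectation gives $\sum_i \pi_i(\bm{C}^{\diamond}) \leq \E D$. Chaining the two inequalities yields $\sum_i C^{\diamond}_i \leq \E D/\gamma_{\min}$, and then the ordering $C^{\diamond}_1 \leq C^{\diamond}_i$ implies $N C^{\diamond}_1 \leq \sum_i C^{\diamond}_i \leq \E D/\gamma_{\min}$, which is the desired bound.

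The main obstacle will be making the aggregate payment bound $\sum_i \pi_i(\bm{C}^{\diamond}) \leq \E D$ fully rigorous within the specific mixed-strategy equilibrium of the pricing sub-game. Once one verifies that no producer bids above $1$ (which follows from A2 and the structure used in the proof of Proposition~\ref{prop:expected_profit}) and that buyers purchase from the lowest-priced producers first, so the realized energy sold never exceeds $D$, the pathwise inequality follows and expectation over the joint randomness in bids, renewable outputs, and demand completes the argument. The rest reduces to the elementary observations above.
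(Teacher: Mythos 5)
Your proof is correct, but it takes a genuinely different route from the paper's. The paper argues by contradiction through first-order conditions: assuming $C^{\diamond}_1 > \frac{\E D}{\gamma_{\min}N}$, it compares the equilibrium FOC of producer $N$ against the FOC of the symmetric equilibrium $(C',\ldots,C')$ of an auxiliary instance in which every producer has cost $\gamma_{\min}$, and uses an event-inclusion (monotonicity of the indicator $\mathbbm{1}\{(D-\sum_{i\neq N}C_iZ_i)^+\geq C_N Z_N\}$ in the capacities) to force the strict inequality $\gamma_{\min} < \gamma_{\min}$. You instead combine two global observations: individual rationality at equilibrium (deviating to $C_i=0$ is feasible under \eqref{eq:cap_game_nash} and yields zero profit, so $\pi_i(\bm{C}^{\diamond}) \geq \gamma_i C^{\diamond}_i \geq \gamma_{\min}C^{\diamond}_i$) and the pathwise revenue cap $\sum_i \pi_i(\bm{C}^{\diamond}) \leq \E D$ (all equilibrium bids lie in $[p,1]$ by A2, and the quantity sold to consumers never exceeds $D$), then pass to $C^{\diamond}_1$ via the ordering $NC^{\diamond}_1 \leq \sum_i C^{\diamond}_i$. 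Your argument is more elementary and arguably more robust: it needs no differentiability of the payment function, no existence of the auxiliary symmetric equilibrium, and no assumption on the correlation structure of the $Z_i$'s beyond what Proposition~\ref{prop:expected_profit} already uses; it also delivers the stronger aggregate bound $\sum_i C^{\diamond}_i \leq \E D/\gamma_{\min}$ as a byproduct. What the paper's FOC/event-inclusion machinery buys is reusability --- the same comparison technique yields the companion bound $C^{\diamond}_N \leq \E D/\gamma$ in Lemma~\ref{lem_capupperbound} and reappears in the efficiency proofs --- whereas your aggregate argument only controls the smallest capacity (and the sum), not the largest. As you note, the only step requiring care is the revenue cap, but it holds realization by realization under any tie-breaking rule, so the expectation over bids, outputs, and demand goes through.
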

\begin{proof}
	Assume by contradiction that the above statement is not true and there exists an equilibrium solution $(C^{\diamond}_1, C^{\diamond}_2, \ldots, C^{\diamond}_N)$ such that $\frac{\E D}{\gamma_{min} N} < C^{\diamond}_1 \leq \ldots \leq C^{\diamond}_N$. Proceeding similarly to the proof of Lemma~\ref{lem_capupperbound}, we can differentiate the profit made by the producer with the highest capacity ($C^{\diamond}_N)$ to get the following:
	\begin{align}
\left(\frac{d}{dC}\pi_N(C^{\diamond}_1, C^{\diamond}_2, \ldots, C)\right)_{C = C^{\diamond}_N} & = \gamma_N \geq \gamma_{\min} \notag \\
\implies \E \left[\mathbbm{1}\left\{(D-\sum_{i \neq N}C^{\diamond}_iZ_i)^+\geq C^{\diamond}_N Z_N\right\} Z_N \right] & \geq \gamma_{min}. \label{eqn_condition_oldinstance}
\end{align}



	Next, consider a `new instance' of the capacity-price game with $N$ providers, all of whom have a symmetric investment cost equal to $\gamma_{\min}$. From Theorem~\ref{th:social_nash}, we know that instances with symmetric investment costs admit symmetric equilibrium solutions. Let $(C', \ldots, C')$ denote the socially optimal investment strategy for this new instance with symmetric investment cost $\gamma_{\min}$, which is also an equilibrium solution. Clearly, for this symmetric equilibrium, the derivative of profit must also equal the investment cost. Therefore:

	\begin{align}
	\left(\frac{d}{dC}\pi_N(C', C', \ldots, C)\right)_{C = C'} & = \gamma_{\min} \notag\\
	\implies \E \left[\mathbbm{1}\left\{(D-\sum_{i \neq N}C'Z_i)^+\geq C' Z_N\right\} Z_N \right] & = \gamma_{\min} \label{eqn_condition_newinstance}
	\end{align}

	Further, recall that in the symmetric equilibrium for the instance where all investment costs are $\gamma_{\min}$, it must be that $C' \leq \frac{\E D}{\gamma_{\min} N}$ as derived in Equation~\eqref{eqn_symmetricupper}. Since $C^{\diamond}_1 > \frac{\E D}{\gamma_{min} N}$ in our original instance, this implies that $C^{\diamond}_1 > C'$. Finally, let $\mathcal{E}$ denote the set of events\footnote{For our purposes, an event is a tuple of instantiations of the i.i.d random variables $(Z_1, Z_2, \ldots, Z_N)$ and $D$ satisfying the required condition.} $\mathbbm{1}\left\{(D-\sum_{i \neq N}C^{\diamond}_iZ_i)^+\geq C^{\diamond}_N Z_N\right\}$ and let $\mathcal{E}'$ denote the events satisfying $\mathbbm{1}\left\{(D-\sum_{i \neq N}C'Z_i)^+\geq C' Z_N\right\}$. Since $C' < C^{\diamond}_1 \leq C^{\diamond}_2 \leq \ldots \leq C^{\diamond}_N$, it is not hard to deduce that $\mathcal{E} \subset \mathcal{E}'$. Indeed for  any (non-zero) instantiation $(Z_1, \ldots, Z_N)$, we have that $(D-\sum_{i \neq N}C^{\diamond}_iZ_i)^+ < (D-\sum_{i \neq N}C'Z_i)^+$ and $C^{\diamond}_NZ_N > C'Z_N$. Combining~\eqref{eqn_condition_oldinstance} and~\eqref{eqn_condition_newinstance} along with the fact that $\mathbbm{1}\left\{(D-\sum_{i \neq N}C^{\diamond}_iZ_i)^+\geq C^{\diamond}_N Z_N\right\} < \mathbbm{1}\left\{(D-\sum_{i \neq N}C'Z_i)^+\geq C' Z_N\right\}$, we get that:
	\begin{align*}
	&\gamma_{\min} \leq \E \left[\mathbbm{1}\left\{(D-\sum_{i \neq N}C^{\diamond}_iZ_i)^+\geq C^{\diamond}_N Z_N\right\} Z_N \right]  < \\ & \quad \E \left[\mathbbm{1}\left\{(D-\sum_{i \neq N}C'Z_i)^+\geq C' Z_N\right\} Z_N \right] = \gamma_{\min} ,
	\end{align*}

	which is a contradiction.
\end{proof}

%
%
%
%
%
%

Now we can proceed to the proof of Theorem \ref{theorem:correlation_asymmetric}.

\begin{proof}
The proof is very similar to that of Theorem~\ref{theorem:correlation}, so we only sketch the new arguments. As with the previous proof, suppose that $G_i = \E[\min(Z_i, D/{C}^{\diamond}_i)]$ for all $1 \leq i \leq N$. Since {$N > \frac{1}{\frac{D_{min}}{D_{max}}\gamma_{\min}}$}, we can invoke Lemma~\ref{lem_capupperbound_asy} to get that $C^{\diamond}_1 \leq \E D$.

Consider the payment received by producer $1$ whose capacity happens to be ${C}^{\diamond}_1$. As per Equation~\eqref{eqn_paymentasymmetric}, this equals:
\begin{align*}
\pi_1({C}^{\diamond}_1, \ldots,   {C}^{\diamond}_N) & = \pi_N({C}^{\diamond}_1,  \ldots, {C}^{\diamond}_N) \frac{{C}^{\diamond}_1 G_1 }{{C}^{\diamond}_N G_N }
\end{align*}
The total profit made by this producer is $\pi_1({C}^{\diamond}_1, \ldots, {C}^{\diamond}_N) - \gamma_1 {C}^{\diamond}_1$. Since this is an equilibrium solution, we have that\\ $\left(\frac{d}{dC}\pi_i(C,  \ldots, {C}^{\diamond}_N)\right)_{C={C}^{\diamond}_1} = \gamma_1$. Proceeding similarly to the proof of Theorem~\ref{th:symmetric_correlation} bearing in mind that $G_1$ is independent of $C$, we get that:
\begin{equation}
	\begin{aligned}
	& \pi_N({C}^{\diamond}_1,  \ldots, {C}^{\diamond}_N) \\
	& = \gamma_1 \frac{{C}^{\diamond}_N G_N}{G_1} \\
	& + {C}^{\diamond}_1 \E[ \mathbbm{1} \{ 0 \leq D-\sum_{i \neq N}{C}^{\diamond}_iZ_i \leq {C}^{\diamond}_NZ_N  \}  Z_1]  \label{eqn_derivativeprofit2_asy}.
	\end{aligned}
\end{equation}

Fix some constant $\kappa$. The rest of the proof proceeds in two cases:

(Case I: $\sum_{i=1}^{N-1} \frac{{C}^{\diamond}_i}{{C}^{\diamond}_N} \leq \kappa N^{\frac{3}{4}}$)

Since, $Z_1 \leq 1$, we can manipulate~\eqref{eqn_derivativeprofit2_asy} to get:
\begin{align*}
\pi_N({C}^{\diamond}_1,  \ldots, {C}^{\diamond}_N) & \leq \gamma_1 \frac{{C}^{\diamond}_N G_N}{G_1} \\
& + {C}^{\diamond}_1 Pr\left(  0 \leq D-\sum_{i \neq N}{C}^{\diamond}_iZ_i \leq {C}^{\diamond}_NZ_N  \right) \\
& \leq \gamma_1 \frac{{C}^{\diamond}_N G_N}{G_1} + {C}^{\diamond}_1 .
\end{align*}
Recall from Lemma~\ref{lem_capupperbound_asy} that in any equilibrium solution, we must have that ${C}^{\diamond}_1 \leq \frac{D}{\gamma_{\min} N}$. Substituting this above, we get that
$$\pi_N({C}^{\diamond}_1,  \ldots, {C}^{\diamond}_N)  \leq \gamma_1 \frac{{C}^{\diamond}_N G_N}{G_1}+ \frac{\E D}{\gamma_{\min} N}.$$

Next, observe that for any $i \neq N$, we can apply 
Proposition \ref{prop:expected_profit} to obtain an upper bound on its profit, namely that:
\begin{align}
\pi_i({C}^{\diamond}_1,  \ldots, {C}^{\diamond}_N)  & \leq \pi_N({C}^{\diamond}_1,  \ldots, {C}^{\diamond}_N) \frac{{C}^{\diamond}_i G_i}{{C}^{\diamond}_N G_N} \nonumber \\
& \leq \left(\gamma_1 \frac{{C}^{\diamond}_N G_N}{G_1} + \frac{\E D}{\gamma_{\min} N} \right) \frac{{C}^{\diamond}_i G_i}{{C}^{\diamond}_N G_N}  \nonumber\\
& \leq \gamma_1 {C}^{\diamond}_i  + \frac{\E D {C}^{\diamond}_i \E[Z_i]  }{ \gamma_{\min} N {C}^{\diamond}_N \gamma_N \E[Z_N]} \notag\\ 
& \leq \gamma_1 {C}^{\diamond}_i  + \frac{\E D {C}^{\diamond}_i }{ \gamma_{\min}^2 N{C}^{\diamond}_N }  \label{eqn_profitupperboundall_asy}.
\end{align}
Recall that $\gamma_N \geq \gamma_{\min}$.

Summing up Equation~\ref{eqn_profitupperboundall_asy} over all $i$ including $i=N$, we get that
$$\sum_{i=1}^N \pi_i({C}^{\diamond}_1,  \ldots, {C}^{\diamond}_N) \leq \gamma_1 \sum_{i=1}^N {C}^{\diamond}_i + \frac{\E D}{N\gamma_{\min}^2} \left(\sum_{i=1}^{N}\frac{{C}^{\diamond}_i}{{C}^{\diamond}_N}\right). $$

Of course, as per our assumption, we have that $\sum_{i=1}^{N-1}\frac{{C}^{\diamond}_i}{{C}^{\diamond}_N} \leq \kappa N^{\frac{3}{4}}.$ Substituting this above, we get that
\begin{align*}
\sum_{i=1}^N \pi_i({C}^{\diamond}_1,  \ldots, {C}^{\diamond}_N) & \leq \gamma_1 \sum_{i=1}^N {C}^{\diamond}_i + \frac{\E D}{\gamma_{\min}^2} \left(\kappa N^{-\frac{1}{4}} + \frac{1}{N}\right)\\
& \leq \gamma_{\max} \sum_{i=1}^N {C}^{\diamond}_i + \frac{\E D}{\gamma_{\min}^2} \left(\kappa N^{-\frac{1}{4}} + \frac{1}{N}\right).
\end{align*}
This proves the theorem statement for the case where $\sum_{i=1}^{N-1} \frac{{C}^{\diamond}_i}{{C}^{\diamond}_N} \leq \kappa N^{\frac{3}{4}}$. 

(Case II: $\sum_{i=1}^{N-1} \frac{{C}^{\diamond}_i}{{C}^{\diamond}_N} > \kappa N^{\frac{3}{4}}$)

Let us go back to Equation~\eqref{eqn_derivativeprofit2_asy} and consider the term \\$B = \E[ \mathbbm{1} \left\{ 0 \leq D-\sum_{i \neq N}{C}^{\diamond}_iZ_i \leq {C}^{\diamond}_NZ_N  \right\}  Z_1]$. By proceeding identically as we did in the proof of Theorem~\ref{theorem:correlation} and applying the Cauchy-Scwarz inequality, we obtain the following bound:
	\begin{equation}
	B \leq \Pr\left( \frac{D}{{C}^{\diamond}_N} - 1 \leq \sum_{i \neq N}\frac{{C}^{\diamond}_i}{{C}^{\diamond}_N}Z_i \leq \frac{D}{{C}^{\diamond}_N}  \right) \leq \kappa' N^{-\frac{1}{4}},.
	\end{equation}




where $\kappa'$ is some constant. Now we can complete the proof. Going back to \eqref{eqn_derivativeprofit2_asy}, and substituting the above upper bound to get that:
\begin{align}
\pi_N({C}^{\diamond}_1,  \ldots, {C}^{\diamond}_N)  & \leq \gamma_1 \frac{{C}^{\diamond}_N G_N}{G_1} + {C}^{\diamond}_1 \kappa' N^{-\frac{1}{4}}  \nonumber\\
& \leq \gamma_1 \frac{{C}^{\diamond}_N G_N}{G_1} + \frac{\E D}{\gamma N} \kappa' N^{-\frac{1}{4}} \label{eqn_derivativeprofit3}.
\end{align}

Next, we upper bound the aggregate payment made to the producers using 
Proposition \ref{prop:expected_profit}. Recall that $G_i \leq G_1$ for all $i$ and that $G_N \geq \gamma_{\min} \E[Z_N]$  as per Lemma~\ref{lem_expectationgames}. We now get that:
\begin{align*}
& \sum_{i=1}^N \pi_i({C}^{\diamond}_1,  \ldots, {C}^{\diamond}_N) \\
 & \leq \sum_{i=1}^N \pi_N({C}^{\diamond}_1,  \ldots, {C}^{\diamond}_N) \frac{\bar{C_i} G_i}{{C}^{\diamond}_N G_N}\\
& \leq \sum_{i=1}^N \left(\gamma_1 \frac{{C}^{\diamond}_N G_N}{G_1} + \frac{\E D G_i}{\gamma_{\min} N G_N} \kappa' N^{-\frac{1}{4}} \right) \frac{\bar{C_i}G_i }{{C}^{\diamond}_NG_N}\\
& \leq \gamma_1 \sum_{i=1}^N {C}^{\diamond}_i \frac{G_i}{G_1} + \frac{\E D}{q\gamma_{\min}^2 N} \kappa' N^{-\frac{1}{4}} \sum_{i=1}^N \frac{\bar{C_i}}{{C}^{\diamond}_N} \\
& \leq \gamma_1 \sum_{i=1}^N {C}^{\diamond}_i +  \frac{\E D}{\gamma_{\min}^2} \kappa' N^{-\frac{1}{4}}.\\
& \leq \gamma_{\max} \sum_{i=1}^N {C}^{\diamond}_i +  \frac{\E D}{q\gamma_{\min}^2} \kappa' N^{-\frac{1}{4}}.
\end{align*}

In the penultimate equation, we used the fact that ${C}^{\diamond}_i \leq {C}^{\diamond}_N$ and so, trivially, $\sum_{i=1}^N \frac{{C}^{\diamond}_i}{{C}^{\diamond}_N} \leq N$. This completes the proof of the second case, and hence, the theorem. \end{proof}

\section{Proof for Theorem \ref{theorem:unique}}
To prove that there is only one symmetric Nash equilibrium, we need to show that there is a unique $C$ such that $C_1 = C_2 = \dots = C_N = C$ which minimizes the total profit in the game:
\begin{equation} \label{eqn:H}
N \E \left[\min \left( (D - C \sum_{j\neq i}^{N}Z_j)^+, CZ_j  \right) \right] - \gamma C N.
\end{equation}

The optimality condition on $C$ to minimize this payment is shown in \eqref{eqn:gamma}.

Showing a unique $C$ maximizes \eqref{eqn:H} is equivalent to showing that the right hand side that involves $C$ in \eqref{eqn:gamma} has only one intersection with $\gamma$, i.e.,  $\E \left[ \mathbbm{1}\left\{(D-C\sum_{j \neq i}^NZ_j)^+\geq C Z_i\right\} Z_i \right]$ is monotonic with respect to $C$. This can be seen from the fact that as $C$ increases, $(D-C\sum_{j \neq i}^NZ_j)^+$ decreases for each realization of $Z_j$ and $CZ_i$ increases by each realization of $Z_i$. Therefore the term inside the  expectation is monotonically decreasing as $C$ increases.
\hfill \qed

{\section{Proof for Theorem \ref{theorem_exist}}}\label{proof_theorem_asymm_exist}

\begin{proof}
We use a standard equilibrium existence result for games with continuous strategy spaces originally proposed by Debreu~\cite{debreu1952social}. We refer the reader to~\cite{dasgupta2015debreu} for a more accessible restatement of the previous result which we will utilize in this proof. In particular, we will prove a slightly stronger claim than the theorem statement, namely that there always exists an equilibrium $(C^{\diamond}_1, C^{\diamond}_2, \ldots, C^{\diamond}_N)$ such that $C^{\diamond}_N = \max\{C^{\diamond}_1, C^{\diamond}_2, \ldots, C^{\diamond}_N\}$---i.e., the maximum capacity investment at equilibrium is by the player with the smallest investment cost $\gamma_N$.

Let us begin by considering a slightly restricted version of our original PV game, where we only allow for investment strategies $\bm{C} = (C_1, C_2, \ldots, C_N)$ such that $C_N \geq C_1, \ldots, C_{N-1}$---i.e., the feasible strategies of this game only include those where PVs $\{1,2,\ldots, N-1\}$ invest capacities smaller than or equal to those by PV $N$. Recall that for all $i \leq N$, PV $i$'s profit is given by: $\pi_i(\bm{C}) - \gamma_i C_i$, where:
\begin{equation}
\pi_i(\bm{C}) = \E\big[\min\{(D-\sum_{j \neq i} Z_{j}C_{j})^{+}, Z_NC_N \} \big] \frac{\E[\min(C_iZ_i, D)]}{\E[\min(C_NZ_N, D)]}.
\label{eqn_utilities_players}
\end{equation}

Note that the expectations are over the randomness in both the demand $D$ and the production $(Z_i)_{i=1}^N$. As per the statement in~\cite{dasgupta2015debreu}, this restricted version of our game admits a Nash equilibrium as long as the following conditions are met:$(i)$ The strategy space available to each player is a compact and convex subset of the Euclidean space and is also upper and lower hemicontinuous and non-empty valued; $(ii)$ $\pi_i(\bm{C}) - \gamma_i C_i$ is continuous in $\bm{C}$ and quasi-concave in $C_i$ for all $i$. It is important to note that the result by~\cite{dasgupta2015debreu} allows for the strategy of PV $i$ to be a function of $\bm{C}_{-i}$ as is the case in our restricted game.

For each player $i < N$, the strategy space available to this player $i$ is $C \in [0,C_N]$. For player $N$ its strategy space is $[\max_{i=1}^{N-1}C_i, C^{\max}]$, where $C^{\max}$ is some finite upper bound on the maximum possible investment by a player. Clearly, each player's strategy space is convex, compact, continuous, and non-empty valued. It only remains for us to prove that the profit function is continuous and quasi-concave. From~\eqref{eqn_utilities_players}, it is not hard to observe that all the three components that make up $\pi_i(\bm{C})$ are continuous in all its inputs, i.e.,
\begin{enumerate}
	\item $\E\big[\min\{(D-\sum_{j \neq i} Z_{j}C_{j})^{+}, Z_NC_N \} \big]$
	\item $\E[\min(C_iZ_i, D)]$
	\item $\E[\min(C_NZ_N, D)]$
\end{enumerate}
are all continuous in the vector of capacities $\bm{C}$. Therefore, it follows that $\pi_i(\bm{C}) - \gamma_iC_i$ is continuous in $\bm{C}$ for all $i$.

To prove quasi-concavity, we need to show that
\begin{align*}
\pi_i(\lambda C^{(1)}_i + (1-\lambda)C^{(2)}_i, \bm{C}_{-i}) - \gamma_i( \lambda C^{(1)}_i + (1-\lambda)C^{(2)}_i) \\ \quad \geq \min\{\pi_i(C^{(1)}_i, \bm{C}_{-i}) - \gamma_i C^{(1)}_i, \pi_i(C^{(2)}_i, \bm{C}_{-i})- \gamma_iC^{(2)}_i\},
\end{align*}

for arbitrary $C^{(1)}_i, C^{(2)}_i \leq C_N$ and all $\lambda \in [0,1]$. Without loss of generality, fix some $C^{(1)}_i \leq C^{(2)}_i \leq C_N$ and $\lambda \in [0,1]$. In order to prove quasi-concavity via the above inequality, it is sufficient to prove that the derivative of the PV profit, i.e., $\frac{d}{dC_i}(\pi_i(C_i, \bm{C}_{-i}) - \gamma_iC_i)$ is monotonically decreasing in $C_i$. Indeed, suppose that $\tilde{C}_i$ denotes the point where
$$\frac{d}{dC_i}(\pi_i(C_i, \bm{C}_{-i}) - \gamma_iC_i)_{C_i = \tilde{C}_i} = 0.$$

Then, the derivative being monotonically non-increasing implies that the function $\pi_i(C_i, \bm{C}_{-i}) - \gamma_iC_i$ is increasing from $C_i=0$ to $C_i=\tilde{C}_i$ and decreasing from $C_i=\tilde{C}_i$ to $C_i = C_N$. This naturally implies quasi-concavity. In order to prove the required condition on the derivative, consider differentiating $\pi_i(\bm{C})$ with respect to $C_i$. This gives us:
\begin{align*}
\frac{d}{dC_i}\pi_i(C_i, \bm{C}_{-i}) =  \frac{1}{{C}_NG_N} \Big(\pi_N(\bm{C}) \frac{d}{dC_i}\E[\min(C_iZ_i, D)]   \\
 \quad  - \E[\min(C_iZ_i, D)]) \E[  \mathbbm{1} \{ 0 \leq D-\sum_{i \neq N}{C}_iZ_i \leq {C}_NZ_N  \}  Z_i]\Big),
\end{align*}
where $G_N = E[\min\{Z_N, \frac{D}{C_N}\}]$. In the above differentiation, we used~\eqref{eqn_utilities_players} to obtain the expression for $\pi_i(\bm{C})$ and Proposition~\ref{lem_derivation} for the expression $\frac{d}{dC_i}\pi_N(\bm{C})$. The fact that $\frac{d}{dC_i}(\pi_i(C_i, \bm{C}_{-i}) - \gamma_iC_i)$ is non-increasing in $C_i$ comes from the observations below:
\begin{enumerate}
	\item $\pi_N(\bm{C}) = E[\min((D-\sum_{j\neq N}C_iZ_i)^+, C_NZ_N)]$ is non-increasing in $C_i$.
	\item $\frac{d}{dC_i}\E[\min(C_iZ_i, D)] = E[\mathbbm{1}\left\{D \geq C_iZ_i\right\} Z_i ]$ is non-increasing in $C_i$.
	\item $\E[\min(C_iZ_i, D)])$ is non-decreasing in $C_i$ and therefore, $-\E[\min(C_iZ_i, D)])$ is non-increasing in the same argument.
	\item $\E[  \mathbbm{1} \left\{ 0 \leq D-\sum_{i \neq N}{C}_iZ_i \leq {C}_NZ_N  \right\}  Z_i$ is non-decreasing in $C_i$ and thus, its negative is non-increasing in the same argument.
\end{enumerate}

To sum up, we have shown that $\frac{d}{dC_i}\pi_i(\bm{C})$ and therefore, $\frac{d}{dC_i}(\pi_i(\bm{C}) - \gamma_iC_i)$ is non-increasing in $C_i$, which in turn implies quasi-concavity. We can then apply the equilibrium existence result from~\cite{debreu1952social,dasgupta2015debreu} to show that the restricted version of our original PV game where player $N$ has the highest capacity investment always admits a Nash equilibrium. To complete the proof, we need to show that the Nash equilibrium for this restricted game is also an equilibrium for our original PV game with no restrictions on the player strategies. Suppose that $\bm{C}^{\diamond} = (C^{\diamond}_1, C^{\diamond}_2, \ldots, C^{\diamond}_N)$ denotes an equilibrium for the restricted game and assume by contradiction that this is not an equilibrium for the original game. Then, only one of two possibilities can occur: $(i)$ either there exists player $i < N$, who can strictly improve its profit by increasing its capacity investment to $C'_i > C^{\diamond}_N$ or $(ii)$ player $N$ can strictly increase its profit by lowering its capacity investment to $C'_N < \max_{i \neq N}C^{\diamond}_i$.

Consider the first case. We will prove that if there exists such a player $i$, then it must also be true that player $N$ can increase its capacity investment and consequentially, its profit which contradicts the fact that $\bm{C}^{\diamond}$ is an equilibrium for the restricted game. If such an improving strategy exists for player $i$, it must definitely be the case that the right hand derivative of its profit at $C_i = C^{\diamond}_N$ must be strictly positive---this is because the derivative of player $i$'s profit is monotonically non-increasing in $C_i$ when $C_i \geq C^{\diamond}_N$. That is, we have that:
\begin{align*}
\frac{d}{dC_i}(\pi_i(C_i, \bm{C}^{\diamond}_{-i}))_{C_i = (C^{\diamond}_N)^+}  = \\ ~~ \frac{d}{dC_i}(E[\min((D-\sum_{j\neq i}C^{\diamond}_jZ_j)^+, C_iZ_i)])_{C_i = C^{\diamond}_N} > \gamma_i.
\end{align*}

Note that the derivative in the right hand side above equals $$-\E \left[\mathbbm{1}\{0 \leq (D - \sum_{j \neq i}C^{\diamond}_jZ_j) \leq C^{\diamond}_N Z_i\}Z_i\right]$$ by Proposition~\ref{lem_derivation}. However, since $\gamma_i \geq \gamma_N$, this in turn implies that
\begin{align*}
\frac{d}{dC_N}(\pi_N(C_N, \bm{C}^{\diamond}_{-N}))_{C_N = (C^{\diamond}_N)}  = \\ ~~ -\E \left[\mathbbm{1}\{0 \leq (D - \sum_{j \neq N}C^{\diamond}_jZ_j) \leq C^{\diamond}_N Z_N\}Z_N\right] > \gamma_N.
\end{align*}

Recall by our assumption that $Z_N$ and $Z_i$ are identically distributed conditional on $(Z_j)_{j \neq i, N}$. In words, this means that player $N$ can also increase its capacity and its profit. This is of course a contradiction of the fact that $\bm{C}^{\diamond}$ is an equilibrium for the restricted game.

In the second case, suppose that player $N$ can lower its capacity and improve profits. The proof in this case is quite analogous to the previous case. Indeed, if player $N$ can lower its capacity to $C'_N < \max_{j \neq N}C^{\diamond}_j$ and increase profits, then one can use monotonicity arguments similar to before to show that the player $i = \arg\max_{j \neq N}C^{\diamond}_j$ can also lower its capacity investment and improve profits. Once again, a contradiction of the fact that $\bm{C}^{\diamond}$ is an equilibrium for the restricted game. This completes our proof that there exists a Nash equilibrium $\bm{C}^{\diamond}$ for the PV game with asymmetric investment costs.


\end{proof}

\end{document}